\newtheorem{thm}{Theorem}[section]
\newtheorem{cor}[thm]{Corollary}
\newtheorem{remark}[thm]{Remark}
\newtheorem{lemma}[thm]{Lemma}
\newtheorem{prop}[thm]{Proposition}
\newtheorem{prob}[thm]{Problem}
\newtheorem{exam}[thm]{Example}
\newtheorem{defn}[thm]{Definition}
\DeclareMathOperator{\cart}{\,\square\hspace{0.07em}}
\DeclareMathOperator{\CC}{\mathbb{C}}
\DeclareMathOperator{\Hom}{Hom}
\DeclareMathOperator{\lc}{\stackrel{lc}{\to}}
\DeclareMathOperator{\ZZ}{\mathbb{Z}}
\newcommand{\<}{\langle}
\renewcommand{\>}{\rangle}
\newcommand{\bb}[1]{\mathbb{#1}}
\newcommand{\cl}[1]{\mathcal{#1}}
\numberwithin{equation}{section}
\def\ben{\begin{enumerate} }
\def\een{\end{enumerate} }
 \def\beq{\begin{equation}}
 \def\eeq{\end{equation}}
\def\ssec{\subsection}
\def\bs{\bigskip}
\newcommand{\df}[1]{{\bf{#1}}{\index{#1}}}
\begin{document}

\title[]{Algebras, Synchronous Games and Chromatic Numbers of Graphs}
\author[J.~W.~Helton]{J. William Helton}
\address{Department of Mathematics, UCSD}
\email{helton@ucsd.edu}
\author[K. P. Meyer]{Kyle P. Meyer}
\address{Department of Mathematics, UCSD}
\email{kpmeyer@ucsd.edu}
\thanks{The first and second-named authors have been supported in part by NSF DMS-1500835  }
\author[V.~I.~Paulsen]{Vern I.~Paulsen}
\address{Department of Pure Mathematics, University of Waterloo,
Waterloo, ON, Canada}
\email{vpaulsen@uwaterloo.ca}
\author[M.~Satriano]{Matthew Satriano}
\thanks{The third and fourth-named authors are supported in part by NSERC Discovery Grants}
\email{msatriano@uwaterloo.ca}

\begin{abstract}  We associate to each synchronous game an algebra whose representations determine if the game has a perfect deterministic strategy, perfect quantum strategy or one of several other strategies. When applied to the graph coloring game, this leads to characterizations in terms of properties of an algebra of various quantum chromatic numbers that have been studied in the literature. This allows us to develop a correspondence between various chromatic numbers of a graph and ideals in this algebra which can then be approached via Gr\"obner basis methods.

\end{abstract}

\maketitle


\section{Introduction}

Given a graph $G$ and a natural number $c$, there is a game called the {\it c-coloring game} of $G$ and it is known that there is a perfect deterministic strategy for this game if and only if $G$ has a c-coloring. Thus, the {\it chromatic number} of $G$, $\chi(G)$ can be characterized as the least integer for which a perfect deterministic strategy exists for the c-coloring game.  This led researchers to consider various kinds of probabilistic strategies for games, especially cases where the probabilities arose from the random outcomes of quantum experiments that were in finite dimensional entangled states, called {\it quantum strategies}. The least $c$ for which the c-coloring game has such a perfect strategy became known as the {\it quantum chromatic} number of a graph, denoted $\chi_q(G)$. For an introduction to this literature, see \cite{CMRSSW}, \cite{MR}. There are some open questions about the proper model for the set of probabilities that arise from entangled quantum experiments and this has led to the study of several, potentially different, definitions of quantum chromatic numbers denoted $\chi_{qa}(G)$ and $\chi_{qc}(G)$, see \cite{PT} and \cite{PSSTW}.

However, while there are many graphs $G$ for which $\chi_q(G)\neq\chi(G)$, it is not yet known whether or not $\chi_q$, $\chi_{qa}$, and $\chi_{qc}$ can assume different values.
Furthermore, there are currently no general algorithms for computing these three quantum chromatic numbers.

There is also an alternative characterization of $\chi(G)$ in terms of graph homomorphisms. If $K_c$ denotes the complete graph on $c$ vertices, then $\chi(G)$ is the least integer $c$ for which a graph homomorphism exists from $G$ to $K_c$. Again there is a corresponding {\it graph homomorphism game} and several, potentially different definitions of {\it quantum graph homomorphisms} defined in terms of the existence of perfect quantum strategies for the graph homomorphism game. In each case, the least c for which there exists a perfect quantum strategy for the graph homomorphism game from $G$ to $K_c$ is the corresponding quantum chromatic number. For more on this literature see, \cite{MR}, \cite{MR2}, and \cite{Ro}.

Given graphs $G$ and $H$, C. Ortiz and the third author \cite{OP2} affiliated a *-algebra to this pair, denoted $\cl A(G,H)$, and characterized the existence of graph homomorphisms from $G$ to $H$ in terms of representations of this algebra: \cite{OP2} proves that $\cl A(G,H)$ has a non-zero homomorphism into the complex numbers if and only if there exists a classical graph homomorphism from $G$ to $H$. Because the problem of determining if $\chi(G) \le 3$ is an NP-complete problem, the results of \cite{OP2} allow us to deduce that the problem of determining if $\cl A(G, K_3)$ has a non-trivial homomorphism into the complex numbers is an NP-complete problem.

Similarly, \cite{OP2} also shows that there exists a {\it quantum graph homomorphism},
as defined by \cite{MR} and \cite{Ro}, from $G$ to $H$ if and only if $\cl A(G,H)$
has a non-zero *-homomorphism into the matrices, that is, a non-zero finite dimensional representation.  A result of \cite{Ji} shows that the problem of determining if $\chi_q(G) \le 3$ is an NP-hard problem. Hence, the problem of determining if $\cl A(G, K_3)$ has any non-zero finite dimensional representations is NP-hard.

Currently, the computational complexity of these other variations of the quantum chromatic number is unknown. The results of \cite{OP2} characterize the existence of these other types of quantum graph homomorphisms in terms of the existence of various types of traces on the algebra $\cl A(G,H)$.
Thus, questions about the existence or non-existence of these various types of
graph homomorphisms, and consequently quantum colorings, is reduced to questions about these algebras.

\subsection{New types of chromatic numbers}
This leads us to a more detailed study of these algebras, which is the main topic of this paper. Since the existence of these various types of representations can be characterized in terms of whether or not certain ideals in these algebras are proper, we are naturally led to study new chromatic numbers of a graph determined by the least integer $c$ so that the type of ideal in which we are interested is proper. Thus, whenever a certain type of ideal is proper, we can use that as the definition of a new type of graph homomorphism and obtain a new {\it algebraic} chromatic number of a graph, which we then try to relate to prior chromatic numbers. Three of these new parameters are denoted $\chi_{alg}(G)$, $\chi_{hered}(G)$, and $\chi_{lc}(G)$. One goal of this paper is to study the properties of these new chromatic numbers.

In general, no algorithms are known for computing the quantum chromatic numbers of a graph. One advantage to $\chi_{alg}(G)$ is that its computation reduces to a  Gr\"obner basis problem in a non-commutative algebra. However, using a machine-aided proof, we are able to show that $\chi_{alg}(G) \le 4$ for every graph, which is not the case for any of the earlier chromatic numbers.

On the other hand, we know of no graphs that separate $\chi_{hered}$ from the other quantum chromatic numbers, while computing this parameter can be done approximately with a combination of algebra and optimization (semidefinite programming).

Since all the earlier notions of quantum graph homomorphism and the corresponding quantum chromatic numbers, were defined in terms of the existence of {\it perfect quantum strategies} for certain games, we begin by identifying a certain family of games for which we can carry out the construction of \cite{OP2} and affiliate an algebra to the game such that the existence of various types of representations of the algebra determines whether or not various types of perfect quantum strategies exist for the game. For this family of games it is possible that all of the notions of perfect quantum strategies coincide and are equivalent to a certain {\it hereditary} ideal in this algebra being proper.

\subsection{Outline of paper}

In Section \ref{sec:def}, we provide some background on games and strategies, introduce the family of {\it synchronous games} which are the games for which we can extend the results of \cite{OP2} and construct an algebra.

In Section \ref{sec:alg-sync-game}, we introduce the *-algebra of a synchronous game, the chromatic numbers $\chi_{alg}(G)$ and $\chi_{hered}(G)$ and prove some of their properties.

In Section \ref{sec:123colors}, we focus on the case of 1, 2, and 3 colors.

In Section \ref{sec:alg}, we show that removing the assumption that the algebra be a free *-algebra instead of just a free algebra, changes nothing essential. We also show that it is enough to study these algebras over $\bb Q$ instead of $\bb C$. This allows us to use a Gr\"obner basis approach.

In Section \ref{sec:4colors}, we present the details of the machine-assisted proof that $\chi_{alg}(G) \le 4$ for all graphs.

In Sections \ref{sec:loc} and \ref{sec:loc properties}, we introduce and study the {\it locally commuting} chromatic number $\chi_{lc}(G)$.

\section{Synchronous Games and Strategies}
\label{sec:def}

We lay out some definitions and a few basic properties of games and strategies. We will primarily be concerned with the $c$-coloring game and the graph homomorphism game.

\ssec{Definitions of games and strategies}

By a \df{two-person  finite input-output game} we mean a tuple
 $\cl G=(I_A, I_B, O_A, O_B, \lambda)$ where $I_A, I_B, O_A, O_B$ are finite sets and
\[ \lambda: I_A \times I_B \times O_A \times O_B \to \{ 0,1 \} \]
is a function that represents the rules of the game, sometimes called the predicate.
The sets $I_A$ and $I_B$ represent the inputs that Alice and Bob can receive,
and the sets $O_A$ and $O_B$, represent the outputs that Alice and Bob can produce,
respectively.  A referee selects a pair $(v,w) \in I_A \times I_B$, gives Alice $v$ and Bob $w$, and they then produce outputs (answers), $a \in O_A$ and $b \in O_B$, respectively.
They win the game if $\lambda(v,w,a,b) =1$ and lose otherwise.
Alice and Bob are allowed to know the sets and the function $\lambda$
and cooperate before the game to produce a strategy for providing outputs,
but while producing outputs, Alice and Bob only know their own inputs
and are not allowed to know the other person's input. Each time that they are given an input and produce an output is referred to as a \df{round} of the game.

We call such a game \df{synchronous} provided that: (i) Alice and Bob have the same input sets and the same output sets, which we denote by $I$ and $O$, respectively, and (ii) $\lambda$ satisfies:
\[ \forall v \in I, \,\, \lambda(v,v,a,b) = \begin{cases} 0 & a \ne b\\ 1 & a=b \end{cases},\]
that is, whenever Alice and Bob receive the same inputs then they must produce the same outputs. To simplify notation we write a synchronous game as $\cl G= (I,O, \lambda)$.

A {\it graph} $G$ is specified by a vertex set $V(G)$ and an edge set $E(G) \subseteq V(G) \times V(G)$, satisfying $(v,v) \notin E(G)$ and $(v,w) \in E(G) \implies (w,v) \in E(G)$. The \df{c-coloring game} for $G$ has inputs $I_A=I_B = V(G)$ and outputs $O_A=O_B = \{ 1,...,c \}$ where the outputs are thought of as different colors. They win provided that whenever Alice and Bob receive adjacent vertices, i.e., $(v,w) \in E$, their outputs are different colors and when they receive the same vertex they must output the same color. Thus,
$(v,w) \in E(G) \implies \lambda(v,w,a,a)=0, \, \forall a$, $\lambda(v,v,a,b)=0, \, \forall v \in V(G), \, \forall a \ne b$ and the rule function is equal to 1 for all other tuples.  It is easy to see that this is a synchronous game.

Given two graphs $G$ and $H$, a {\it graph homomorphism from G to H} is a function $f:V(G) \to V(H)$ with the property that $(v,w) \in E(G) \implies (f(v), f(w)) \in E(H)$.
The \df{graph homomorphism game} from $G$ to $H$ has inputs $I_A=I_B = V(G)$ and outputs $O_A=O_B = V(H).$  They win provided that whenever Alice and Bob receive inputs that are an edge in $G$, then their outputs are an edge in $H$ and that whenever Alice and Bob receive the same vertex in $G$ they produce the same vertex in $H$. This is also a synchronous game.

A \df{deterministic strategy} for a game is a pair of functions, $h: I_A \to O_A$ and $k:I_B \to O_B$ such that if Alice and Bob receive inputs $(v,w)$ then they produce outputs $(h(v), k(w))$. A deterministic strategy  wins every round of the game if and only if
\[ \forall  (v,w) \in I_A \times I_B,   \lambda(v,w, h(v), k(w)) =1.\] Such a strategy is called a \df{perfect deterministic strategy}.

It is not hard to see that for a synchronous game, any perfect deterministic strategy must satisfy, $h=k$.  In particular, a perfect deterministic strategy for the c-coloring game for $G$ is a function $h: V(G) \to \{ 1,...,c \}$ such that $(v,w) \in E(G) \implies h(v) \ne h(w)$.  Thus, a perfect deterministic strategy is precisely a c-coloring of  $G$.
Similarly, a perfect deterministic strategy for the graph homomorphism is precisely a graph homomorphism.

Finally, it is not difficult to see that if $K_c$ denotes the complete graph on $c$ vertices then a graph homomorphism exists from $G$ to $K_c$ if and only if $G$ has a c-coloring. This is because any time $(v,w) \in E(G)$ then a graph homomorphism must send them to distinct vertices in $K_c$. Indeed, the rule function for the c-coloring game is exactly the same as the rule function for the graph homomorphism game from $G$ to $K_c$.

A \df{random strategy} for such a game is a conditional probability density $p(a,b|v,w)$, which represents the probability that, given inputs $(v,w) \in I_A \times I_B$, Alice and Bob produce outputs $(a,b) \in O_A \times O_B$.  Thus, $p(a,b|v,w) \ge 0$ and for each $(v,w),$
\[ \sum_{a \in O_A, b \in O_B} p(a,b|v,w) =1.\]

In this paper we identify random strategies with their conditional probability density, so that a random strategy will simply be a conditional probability density $p(a,b|v,w)$.

A random strategy is called \df{perfect} if
\[ \lambda(v,w,a,b)=0 \implies p(a,b|v,w) =0, \, \forall (v,w,a,b) \in I_A \times I_B \times O_A \times O_B.\]
Thus, for each round, a perfect strategy gives a winning output with probability 1.

We next discuss \df{local} random strategies, which are also sometimes called \df{classical}, meaning not quantum. They are obtained as follows: Alice and Bob share a probability space $(\Omega, P)$, for each input $v \in I_A$, Alice has a random variable,
$f_v: \Omega \to O_A$ and for each input $w \in I_B$, Bob has a random variable, $g_w: \Omega \to O_B$ such that for each round of the game, Alice and Bob will evaluate their random variables at a point $\omega \in \Omega$ via a formula that has been agreed upon in advance. This yields conditional probabilities,
\[ p(a,b|v,w) = P(\{ \omega \in \Omega \mid f_v(\omega) =a, g_w(\omega) =b \}).\]
This will be a perfect strategy if and only if
\[ \forall (v,w), \, P(\{ \omega \in \Omega \, \mid \lambda(v,w, f_v(\omega), g_w(\omega)) =0 \}) =0,\]
or equivalently,
\[ \forall (v,w), \, P(\{ \omega \in \Omega \, \mid \lambda(v,w, f_v(\omega), g_w(\omega)) =1 \}) =1.\]

If we have a perfect local strategy and set
\[ \Omega_1 = \cap_{v \in I_A, w \in I_B} \{ \omega \in \Omega \, \mid \lambda(v,w, f_v(t), g_w(t)) =1 \},\]
then $P(\Omega_1) =1$ since $I_A$ and $I_B$ are finite sets; in particular, $\Omega_1$ is non-empty. If we choose any $\omega \in \Omega_1$ and set $h(v) = f_v(\omega)$ and $k(w) = g_w(\omega)$, then it is easily checked that this is a perfect deterministic strategy.

Thus, a perfect classical random strategy exists if and only if a perfect deterministic strategy exists.  An advantage to using a perfect classical random strategy over a perfect deterministic strategy, is that it is difficult for an observer to find a deterministic strategy even after observing the outputs of many rounds.

The idea behind \df{nonlocal games} is to allow a larger set of conditional probabilities, namely, those that can be obtained by allowing Alice and Bob to run quantum experiments to obtain their outputs.

Definitions of these various sets of probability densities, including loc, q, qa, qc, vect, nsb can be found in \cite[Section~6]{OP2} or \cite{PSSTW}, so we will avoid repeating them here.
We only remark that if for $t\in \{ loc, q, qa, qc, vect, nsb \}$ we use $C_t$ to denote the corresponding set of conditional probabilities, then it is known that
\[ C_{loc} \subsetneq C_q \subseteq C_{qa} \subseteq C_{qc} \subsetneq C_{vect} \subsetneq C_{nsb}.\]

The sets $C_q$, $C_{qa}$ and $C_{qc}$ represent three potentially different mathematical models for the set of all probabilities that can arise as outcomes from entangled quantum experiments. The question of whether or not $C_{qa}=C_{qc}$ for any number of experiments and any number of outputs is known to be equivalent to Connes' embedding conjecture due to results of \cite{Oz13}.

We say that $p(a,b|v,w)$ is a \df{perfect t-strategy} for a game provided that it is a perfect strategy that belongs to the corresponding set of probability densities.

Given a graph $G$ we set $\chi_t(G)$ equal to the least $c$ for which there exists a perfect t-strategy for the c-coloring game for $G$. The above inclusions imply that
\[ \chi(G) = \chi_{loc}(G) \ge \chi_q(G) \ge \chi_{qa}(G) \ge \chi_{qc}(G) \ge \chi_{vect}(G) \ge \chi_{nsb}(G).\]
Currently, it is unknown if there are any graphs that separate $\chi_q(G), \chi_{qa}(G)$ and $\chi_{qc}(G)$ or whether these three parameters are always equal.
Examples of graphs are known for which $\chi(G) > \chi_q(G)$, for which $\chi_{qc}(G) > \chi_{vect}(G)$ and for which $\chi_{vect}(G) > \chi_{nsb}(G)$.
For details, see \cite{CMRSSW}, \cite{PT} and \cite{PSSTW}.
Other versions of quantum chromatic type  graph parameters appear
in \cite{BLP} and a comparison of those parameters with 
$\chi_{qa}$ and $ \chi_{qc}$ can be found in \cite[Section 1]{BLP}.

Similarly, we say that there is a {\it t-homomorphism} from $G$ to $H$ if and only if there exists a perfect t-strategy for the graph homomorphism game from $G$ to $H$ and it is unknown if q-homomorphisms, qa-homomorphisms and qc-homomorphisms are distinct or coincide.

Finally, we close this section by showing that it is enough to consider so-called symmetric games. Note that in a synchronous game there is no requirement that  $\lambda(v,w,a,b) =0 \implies \lambda(w,v,b,a)=0$. That is, the rule function does not need to be \df{symmetric} in this sense.  The following shows that it is enough to consider synchronous games with this additional symmetry.

Given $\cl G = (I,O, \lambda)$ a synchronous game,  we define
$\lambda_s: I \times I \times O \times O \to \{ 0,1 \}$ by setting
$\lambda_s(v,w,a,b) = \lambda(v,w,a,b) \lambda(w,v,b,a)$ and set $\cl G_s= (I,O, \lambda_s)$. Then it is easily seen that $\cl G_s$ is a synchronous game with the property that $\lambda_s(v,w,a,b)=0 \iff \lambda_s(w,v,b,a)=0$.

\ssec{A few properties of strategies}
In the remainder of this section, we prove the following slight extension of \cite{PT}.

\begin{prop} Let $\cl G= (I,O, \lambda)$ be a synchronous game and let
$p(a,b|v,w)= \langle h_{v,a}, k_{w,b} \rangle$ be a perfect vect-strategy for $\cl G$,
 where the vectors $h_{v,a}$ and $k_{w,b}$ are as in the definition of a vector correlation(see \cite[6.15]{OP2}). Then $h_{v,a} = k_{v,a}, \forall v \in I,  a \in O$.
\end{prop}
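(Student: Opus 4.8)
The plan is to unpack the definition of a vector correlation and then exploit the synchronicity condition $\lambda(v,v,a,b)=0$ for $a\neq b$ together with the normalization and the Cauchy--Schwarz inequality. Recall that in a vector correlation the vectors $\{h_{v,a}\}_{a\in O}$ and $\{k_{w,b}\}_{b\in O}$ each form, for fixed $v$ and $w$, an orthogonal set summing to a fixed unit vector $\xi$ (the details are in \cite[6.15]{OP2}): one has $\langle h_{v,a}, h_{v,a'}\rangle = 0$ for $a\neq a'$, $\sum_a h_{v,a} = \xi = \sum_b k_{w,b}$, and $\langle \xi,\xi\rangle = 1$, with $p(a,b|v,w)=\langle h_{v,a},k_{w,b}\rangle$ genuinely a probability density. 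First I would record that, summing $p(a,b|v,w)$ over all $b$, we get $\sum_b \langle h_{v,a},k_{w,b}\rangle = \langle h_{v,a},\xi\rangle$, and summing over all $a$ and $b$ gives $\langle \xi,\xi\rangle = 1$; in particular $\sum_a \langle h_{v,a},\xi\rangle = 1$ and likewise $\sum_b \langle \xi, k_{w,b}\rangle = 1$.

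Next I would fix a vertex $v$ and apply the perfect strategy condition to the input pair $(v,v)$. Perfectness says $p(a,b|v,v)=0$ whenever $\lambda(v,v,a,b)=0$, which for a synchronous game means $\langle h_{v,a},k_{v,b}\rangle = 0$ for all $a\neq b$. Therefore $1 = \sum_{a,b}\langle h_{v,a},k_{v,b}\rangle = \sum_a \langle h_{v,a},k_{v,a}\rangle$. Each term $\langle h_{v,a},k_{v,a}\rangle = p(a,a|v,v)$ is nonnegative, and by Cauchy--Schwarz $\langle h_{v,a},k_{v,a}\rangle \le \|h_{v,a}\|\,\|k_{v,a}\|$. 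The plan is to pair this with the observation that $\sum_a \|h_{v,a}\|^2 = \|\xi\|^2 = 1$ (orthogonality of the $h_{v,a}$) and likewise $\sum_a \|k_{v,a}\|^2 = 1$; then by Cauchy--Schwarz again, $\sum_a \|h_{v,a}\|\,\|k_{v,a}\| \le \big(\sum_a \|h_{v,a}\|^2\big)^{1/2}\big(\sum_a \|k_{v,a}\|^2\big)^{1/2} = 1$. Since the left-hand chain $1 = \sum_a \langle h_{v,a},k_{v,a}\rangle \le \sum_a \|h_{v,a}\|\,\|k_{v,a}\| \le 1$ collapses to equalities, both Cauchy--Schwarz applications must be equalities.

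Equality in the termwise Cauchy--Schwarz $\langle h_{v,a},k_{v,a}\rangle = \|h_{v,a}\|\,\|k_{v,a}\|$ forces $h_{v,a}$ and $k_{v,a}$ to be nonnegative scalar multiples of one another, say $k_{v,a} = t_a h_{v,a}$ with $t_a \ge 0$ (taking care of the degenerate case where one of them is zero, in which the other must be zero too, which follows from equality in the outer Cauchy--Schwarz combined with $\sum\|h_{v,a}\|^2 = \sum\|k_{v,a}\|^2 = 1$). Then equality in the outer Cauchy--Schwarz forces the sequences $(\|h_{v,a}\|)_a$ and $(\|k_{v,a}\|)_a$ to be proportional, and since both have the same $\ell^2$-norm equal to $1$, they are in fact equal: $\|h_{v,a}\| = \|k_{v,a}\|$ for every $a$. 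Combined with $k_{v,a}=t_a h_{v,a}$, $t_a\ge 0$, this gives $t_a = 1$ whenever $h_{v,a}\neq 0$, hence $h_{v,a}=k_{v,a}$ in all cases. Running this argument for each $v\in I$ completes the proof.

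I expect the main obstacle to be purely bookkeeping rather than conceptual: carefully extracting from \cite[6.15]{OP2} exactly which orthogonality and normalization relations the vectors $h_{v,a}$, $k_{w,b}$ satisfy (in particular that $\sum_a h_{v,a}$ is a single unit vector independent of $v$, and that the $h_{v,a}$ are mutually orthogonal for fixed $v$), and handling the degenerate cases where some $h_{v,a}$ or $k_{v,a}$ vanishes so that the ``proportionality'' conclusion of the Cauchy--Schwarz equality case is stated correctly. Once those relations are in hand, the two nested applications of Cauchy--Schwarz and their equality cases do all the work.
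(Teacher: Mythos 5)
Your proof is correct and follows essentially the same route as the paper: using synchronicity and perfectness to reduce the normalization $1=\sum_{a,b}p(a,b|v,v)$ to $\sum_a \langle h_{v,a},k_{v,a}\rangle$, then chaining the termwise Cauchy--Schwarz with the Cauchy--Schwarz on the resulting sequences of norms and noting the whole chain of inequalities must collapse to equalities. The only difference is that you spell out the equality cases of Cauchy--Schwarz (proportionality plus equal $\ell^2$-norms) in more detail than the paper, which just asserts the conclusion; this is welcome extra care but not a different argument.
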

\begin{proof} By definition, for each $v \in I$ the vectors $\{h_{v,a}: a \in O \}$ are mutually orthogonal and $\{ k_{v,a}: a \in O \}$ are mutually orthogonal. So,
\begin{multline*}
 1 = \sum_{a,b \in O} p(a,b|v,v) = \sum_{a \in O} p(a,a|v,v) = \sum_{a \in O} \langle h_{v,a}, k_{v,a} \rangle \le \\ \sum_{a \in O}  \|h_{v,a}\| \|k_{v,a}\| \le \big( \sum_{a \in O} \|h_{v,a}\|^2 \big)^{1/2} \big( \sum_{a \in O} \|k_{v,a}\|^2 \big)^{1/2}=1.\end{multline*}
Thus, the inequalities are equalities, which forces $h_{v,a} = k_{v,a}$ for all $v \in I$ and all $a \in O$.
\end{proof}
\begin{cor} Let $\cl G=(I,O,\lambda)$ be a synchronous game and let $t \in \{ loc, q, qa, qc, vect \}$. If $p(a,b|v,w)$ is a perfect t-strategy for $\cl G$, then $p(a,b|v,w) = p(b,a|w,v)$ for all $v,w \in I$ and all $a,b \in O$.
\end{cor}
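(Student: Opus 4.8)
The plan is to reduce the statement to the single case $t = \mathrm{vect}$ and then invoke the Proposition just established. The first step is to recall the chain of inclusions $C_{loc} \subseteq C_q \subseteq C_{qa} \subseteq C_{qc} \subseteq C_{vect}$ recorded above. Since being a \emph{perfect} strategy is a condition on $p$ alone and does not refer to $t$, any perfect $t$-strategy with $t \in \{ loc, q, qa, qc, vect \}$ is in particular a perfect vect-strategy. Hence it is enough to prove the claim when $p(a,b|v,w)$ is a perfect vect-strategy.

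Next, I would apply the definition of a vector correlation to write $p(a,b|v,w) = \langle h_{v,a}, k_{w,b} \rangle$ for families of vectors $\{h_{v,a}\}$ and $\{k_{w,b}\}$ as in the Proposition, and then invoke the Proposition to conclude $h_{v,a} = k_{v,a}$ for all $v \in I$ and $a \in O$. Substituting this identity gives
\[
p(a,b|v,w) = \langle h_{v,a}, h_{w,b} \rangle, \qquad p(b,a|w,v) = \langle h_{w,b}, k_{v,a} \rangle = \langle h_{w,b}, h_{v,a} \rangle .
\]
Since both of these numbers are values of a conditional probability density they are real, and since the inner product is conjugate-symmetric we get $\langle h_{v,a}, h_{w,b} \rangle = \overline{\langle h_{w,b}, h_{v,a} \rangle} = \langle h_{w,b}, h_{v,a} \rangle$, the last step because the right-hand side is real. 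This yields $p(a,b|v,w) = p(b,a|w,v)$, as desired.

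I do not anticipate a genuine obstacle here: essentially all the work is done by the Proposition, and the only point requiring a moment's care is the observation that the two inner products that arise are formal complex conjugates of one another yet are both real, so they must coincide (over a real Hilbert space this point is vacuous). If one preferred to avoid appealing to the vect case, an alternative would be to run a symmetrization argument directly, replacing $\lambda$ by $\lambda_s$ as in the construction of $\cl G_s$, but the reduction to the Proposition is cleaner and that is the route I would present.
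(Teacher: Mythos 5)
Your proposal is correct and follows essentially the same route as the paper: reduce to the vect case via the inclusion $C_t \subseteq C_{vect}$, invoke the Proposition to identify $h_{v,a} = k_{v,a}$, and then use that the resulting inner products are real (the paper cites the non-negativity built into the definition of a vector correlation, you cite reality of the probability density — either suffices) so that conjugate symmetry gives the equality.
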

\begin{proof} If $p(a,b|v,w)$ is a perfect t-strategy, then it is a perfect vect-strategy and hence there exist vectors as in the definition such that,
\[ p(a,b|v,w) = \langle h_{v,a}, h_{w,b} \rangle = \langle h_{w,b}, h_{v,a} \rangle = p(b,a|w,v),\]
where the middle equality follows since the inner products are assumed to be non-negative.
\end{proof}

This corollary readily yields the following result.

\begin{prop}\label{prop:reduce-to-synch-games}
 Let $\cl G=(I,O, \lambda)$ be a synchronous game and let $t \in \{ loc, q,qa,qc, vect \}$. Then $p(a,b|v,w)$ is a perfect t-strategy for $\cl G$ if and only if $p(a,b|v,w)$ is a perfect t-strategy for $\cl G_s$.
\end{prop}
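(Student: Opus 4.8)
The plan is to unwind the definition of ``perfect $t$-strategy'' into its two constituent parts --- membership in the correlation set $C_t$ and the winning condition $\lambda(v,w,a,b)=0 \implies p(a,b|v,w)=0$ --- and to check each separately. For the first part there is nothing to do: $C_t$ is defined purely in terms of the input and output sets $I$ and $O$ (see the references cited in the excerpt), and $\cl G$ and $\cl G_s$ share these sets, so $p \in C_t$ for one game exactly when $p \in C_t$ for the other. Hence the proposition reduces entirely to showing that the winning conditions for $\cl G$ and for $\cl G_s$ are equivalent for any $p \in C_t$ that wins one of them.

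The easy implication is that a perfect $t$-strategy for $\cl G_s$ is a perfect $t$-strategy for $\cl G$. Indeed, since $\lambda_s(v,w,a,b)=\lambda(v,w,a,b)\lambda(w,v,b,a)$ and all values lie in $\{0,1\}$, we have $\lambda(v,w,a,b)=0 \implies \lambda_s(v,w,a,b)=0$; so if $p$ satisfies the winning condition for $\cl G_s$ it automatically satisfies it for $\cl G$. No use of synchrony is needed here.

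For the converse, suppose $p$ is a perfect $t$-strategy for $\cl G$ and suppose $\lambda_s(v,w,a,b)=0$. Because the values are in $\{0,1\}$, this forces either $\lambda(v,w,a,b)=0$ or $\lambda(w,v,b,a)=0$. In the first case $p(a,b|v,w)=0$ directly from $p$ being perfect for $\cl G$. In the second case $p(b,a|w,v)=0$ from the same fact; but here I would invoke the preceding Corollary, which says precisely that a perfect $t$-strategy for a synchronous game satisfies $p(a,b|v,w)=p(b,a|w,v)$ for all $v,w\in I$ and $a,b\in O$ (valid for every $t \in \{loc,q,qa,qc,vect\}$). Therefore $p(a,b|v,w)=p(b,a|w,v)=0$. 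In either case the winning condition for $\cl G_s$ holds, so $p$ is a perfect $t$-strategy for $\cl G_s$.

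There is no serious obstacle; the only substantive ingredient is the symmetry relation $p(a,b|v,w)=p(b,a|w,v)$, which is exactly the content of the Corollary, and the only point requiring care is to note that this symmetry is available for all the listed values of $t$ (since each such $p$ is in particular a perfect vect-strategy, to which the Proposition on vector correlations applies). The rest is a case split on the factorization of $\lambda_s$.
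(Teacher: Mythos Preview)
Your proof is correct and follows essentially the same approach as the paper, which simply notes that the result ``readily yields'' from the preceding Corollary establishing $p(a,b|v,w)=p(b,a|w,v)$. You have spelled out the details of that easy deduction carefully and accurately.
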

\section{The *-algebra of a synchronous game}
\label{sec:alg-sync-game}
We begin by constructing a *-algebra, defined by generators and relations, that is affiliated with a synchronous game. The existence or non-existence of various types of perfect quantum strategies for the game then corresponds to the existence or non-existence of various types of representations of this algebra. This leads us to examine various ideals in the algebra.

\ssec{Relations generators and the basic *-algebra}
Let $\cl G= ( I,O, \lambda)$ be a synchronous game and assume that the cardinality of $I$ is $|I|=n$ while the cardinality of $O$ is $|O|=m$. We will often identify $I$ with $\{ 0,..., n-1 \}$ and $O$ with $\{ 0,..., m-1 \}$. We let $\bb F(n,m)$ denote the free product of $n$ copies of the cyclic group of order $m$ and let $\bb C[\bb F(n,m)]$ denote the complex *-algebra of the group.
We regard the group algebra as both a *-algebra, where for each group element $g$
we have $g^* = g^{-1}$,
and 
as an (incomplete) inner product space,
with the group elements forming an orthonormal set and the inner product is given by
\[ \langle f, h \rangle = \tau(fh^*),\]
where $\tau$ is the trace functional.

For each $v \in I$ we have a unitary generator  $u_v \in \bb C[\bb F(n,m)]$ such that $u_v^m = 1$. If we set $\omega = e^{2 \pi i/m}$ then the eigenvalues of each $u_v$ is the set $\{ \omega^a: 0 \le a \le m-1 \}$.  The orthogonal projection onto the eigenspace corresponding to $\omega^a$ is given by
\beq
\label{eq:efromu}
e_{v,a} = \frac{1}{m} \sum_{k=0}^{m-1} \big( \omega^{-a} u_v \big)^k,
\eeq
and these satisfy
\[ 1 = \sum_{a=0}^{m-1} e_{v,a} \text{ and } u_v = \sum_{a=0}^{m-1} \omega^a e_{v,a}.\]
The set $\{ e_{v,a}: v \in I, 0 \le a \le m-1 \}$ is another set of generators for $\bb C[\bb F(n,m)]$.

We let $\cl I(\cl G)$  \index{$\cl I(\cl G)$}
denote the 2-sided *-ideal in $\bb C[\bb F(n,m)]$ generated by the set
\[ \{ e_{v,a}e_{w,b} \, | \ \lambda(v,w,a,b)=0 \} \]
and refer to it as \df{the ideal of the game $\cl G$}. We define the \df{*-algebra of $\cl G$} to be the quotient
\[ \cl A(\cl G) = \bb C[\bb F(n,m)]/\cl I(\cl G) .\]


A familiar case occurs when we are given two graphs $G$ and $H$ and $\cl G$ is the graph homomorphism game from $G$ to $H$.  Then $\cl A(\cl G) =\cl A(G,H)$, where the algebra on the right hand side is the algebra introduced in \cite{OP2}, so we shall continue that notation in this instance. Recall that $\cl A(G, K_c)$ is then the algebra of the c-coloring game for $G$.

\begin{defn} We say that a game has a perfect algebraic strategy if $\cl A(\cl G)$ is nontrivial. Given graphs G and H, we write $G \stackrel{alg}{\longrightarrow} H$ if $\cl A(G,H)$ is nontirvial.  We define the algebraic chromatic number of G to be
$$\chi_{alg}(G)= min \{c \mid \cl A(G, K_c) \ is \ nontrivial \} $$
\end{defn}

The following is a slight generalization of \cite[Theorem~4.7]{OP2}.

\begin{thm} Let $\cl G=(I,O, \lambda)$ be a synchronous game.
\begin{enumerate}
\item $\cl G$ has a perfect deterministic strategy
 if and only if
there exists a unital *-homomorphism from $\cl A(\cl G)$ to $\bb C$.
\item $\cl G$ has a perfect q-strategy if and only if there exists a
unital *-homomorphism from $\cl A(\cl G)$ to $B(\cl H)$ for some
non-zero finite dimensional Hilbert space.
\item $\cl G$ has a perfect qc-strategy if and only if there exists a
unital C*-algebra $\cl C$ with a faithful trace and a unital *-homomorphism
$\pi: \cl A(\cl G) \to \cl C$.
\end{enumerate}
Hence, if $\cl G$ has a perfect qc-strategy, then it has a perfect algebraic strategy and so $\chi_{qc}(G) \ge \chi_{alg}(G)$ for every graph $G$.
\end{thm}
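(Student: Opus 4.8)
The plan is to reduce everything to the universal property of $\bb C[\bb F(n,m)]$. Since $\bb F(n,m)$ is a free product of $n$ cyclic groups of order $m$, to give a unital $*$-homomorphism $\pi$ from $\bb C[\bb F(n,m)]$ to a unital $*$-algebra $\cl B$ is the same as choosing, for each $v\in I$, a unitary $U_v=\pi(u_v)\in\cl B$ with $U_v^m=1$; by spectral decomposition of $u_v$ this is the same as choosing, for each $v$, a partition of unity $\{E_{v,a}:=\pi(e_{v,a})\}_{a\in O}$ into projections, and such a $\pi$ descends to $\cl A(\cl G)=\bb C[\bb F(n,m)]/\cl I(\cl G)$ if and only if $E_{v,a}E_{w,b}=0$ whenever $\lambda(v,w,a,b)=0$. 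So for each of (1)--(3) I would match ``$\cl G$ has a perfect $t$-strategy'' with ``there exist projective measurements, in an algebra of the appropriate type, annihilating the forbidden products $e_{v,a}e_{w,b}$'', the translation being supplied by the synchronous-strategy reductions in \cite{PT}, \cite{PSSTW} and \cite[Section~6]{OP2}.

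For (1): a unital $*$-homomorphism $\phi\colon\cl A(\cl G)\to\bb C$ sends each $e_{v,a}$ to a scalar projection, i.e.\ to $0$ or $1$, and $\sum_a\phi(e_{v,a})=1$ singles out a unique $h(v)$ with $\phi(e_{v,h(v)})=1$; then $\phi(e_{v,h(v)}e_{w,h(w)})=1\neq0$ forces $e_{v,h(v)}e_{w,h(w)}\notin\cl I(\cl G)$, hence $\lambda(v,w,h(v),h(w))=1$ for all $v,w$, so $h$ is a perfect deterministic strategy. Conversely, from a perfect deterministic strategy $h$ I would send $u_v\mapsto\omega^{h(v)}$; since $\bb C$ is commutative this extends, by the universal property, to a unital $*$-homomorphism $\bb C[\bb F(n,m)]\to\bb C$ with $e_{v,a}\mapsto1$ if $a=h(v)$ and $0$ otherwise, and perfectness of $h$ makes it annihilate every forbidden $e_{v,a}e_{w,b}$, so it factors through $\cl A(\cl G)$.

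For (2) and (3), the one genuinely non-formal step is the following lemma: if $\{E_{v,a}\}$ are projective measurements in a unital C*-algebra with a faithful trace $\tau$ and $p(a,b|v,w):=\tau(E_{v,a}E_{w,b})$ is perfect, then $\lambda(v,w,a,b)=0$ forces $E_{v,a}E_{w,b}=0$ and not merely $\tau(E_{v,a}E_{w,b})=0$; indeed $E_{v,a}E_{w,b}E_{v,a}=(E_{w,b}E_{v,a})^{*}(E_{w,b}E_{v,a})\geq0$ has trace $\tau(E_{v,a}E_{w,b})=0$, so faithfulness gives $E_{w,b}E_{v,a}=0$, hence $E_{v,a}E_{w,b}=0$. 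Granting this, for (2) I would use that a perfect $q$-strategy for $\cl G$ is the same as a perfect synchronous $q$-strategy, which by \cite{PT} (see also \cite{PSSTW}, \cite[Section~6]{OP2}) amounts to a finite dimensional C*-algebra $\cl D$ with a faithful trace $\tau$ and projective measurements $\{E_{v,a}\}_a\subset\cl D$ for which $\tau(E_{v,a}E_{w,b})$ is perfect; the lemma then gives $E_{v,a}E_{w,b}=0$ when $\lambda=0$, so $u_v\mapsto\sum_a\omega^aE_{v,a}$ defines a unital $*$-homomorphism $\cl A(\cl G)\to\cl D$, which composed with a faithful unital representation $\cl D\hookrightarrow B(\cl H)$ on a nonzero finite dimensional $\cl H$ gives what we want. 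Conversely, any unital $*$-homomorphism $\pi\colon\cl A(\cl G)\to B(\cl H)$ with $\cl H\neq0$ finite dimensional produces measurements $E_{v,a}=\pi(e_{v,a})$ killing the forbidden products, and $p(a,b|v,w):=\tfrac{1}{\dim\cl H}\,\mathrm{Tr}(E_{v,a}E_{w,b})$ is then checked to be a perfect synchronous $q$-strategy. Part (3) is proved identically, with ``finite dimensional C*-algebra $\cl D$'' and $B(\cl H)$ replaced throughout by ``unital C*-algebra $\cl C$ with a faithful trace'', invoking instead the characterization of synchronous $qc$-correlations as the correlations $\tau(E_{v,a}E_{w,b})$ coming from a faithful trace on a unital C*-algebra.

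For the final assertion: if $\cl G$ has a perfect $qc$-strategy, then by (3) there is a unital $*$-homomorphism $\pi\colon\cl A(\cl G)\to\cl C$ into a unital C*-algebra, and $\pi(1)=1\neq0$ forces $\cl A(\cl G)\neq\{0\}$, i.e.\ a perfect algebraic strategy; applying this to the $c$-coloring game of $G$ with $c=\chi_{qc}(G)$ makes $\cl A(G,K_{\chi_{qc}(G)})$ nontrivial, so $\chi_{alg}(G)\leq\chi_{qc}(G)$. The step I expect to be the main obstacle is precisely the use of the synchronous-strategy machinery in (2) and (3): reducing an arbitrary perfect $q$- or $qc$-strategy to the faithful-tracial form $\tau(E_{v,a}E_{w,b})$, and conversely checking that every correlation of that form lies in $C_q$ (resp.\ $C_{qc}$) and is perfect. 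This is the only place where synchronicity of $\cl G$ and the precise definitions of the correlation classes do real work; the rest is the bookkeeping of the universal property.
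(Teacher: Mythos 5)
Your proposal is correct and follows essentially the same route as the paper: reduce to the PSSTW characterization of synchronous correlations as $p(a,b|v,w)=\tau(E_{v,a}E_{w,b})$ coming from a tracial C*-algebra, pass to the GNS quotient to get a faithful trace, and then use the universal property of $\bb C[\bb F(n,m)]$ to translate between projective measurements and $*$-homomorphisms. The paper's proof of this theorem is terse and only sketches the direction ``strategy $\Rightarrow$ homomorphism,'' leaving implicit both the converse and the crucial step that faithfulness plus $\tau(E_{v,a}E_{w,b})=0$ forces $E_{v,a}E_{w,b}=0$; you isolate that step as a short explicit lemma (using $E_{v,a}E_{w,b}E_{v,a}=(E_{w,b}E_{v,a})^*(E_{w,b}E_{v,a})$ and tracialness), which is exactly what makes the factorization through $\cl A(\cl G)$ work, and you spell out the converses. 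For part (1) you give a slightly more elementary argument than the paper (direct correspondence between determinstic strategies and characters via $u_v\mapsto\omega^{h(v)}$, rather than specializing the abelian case of the correlation machinery), but the content is the same. No gaps.
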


\begin{proof} We start with the third statement.
Since the game is synchronous, any perfect strategy  $p(a,b|v,w)$ must also be synchronous.
By \cite[Theorem~5.5]{PSSTW}, any synchronous density is of the following form:
$p(a,b|v,w) = \tau(E_{v,a}E_{w,b})$, where $\tau: \cl C \to \bb C$ is a tracial state for a
a unital C*-algebra $\cl C$ generated by projections
$\{ E_{v,a} \}$ satisfying $\sum_a E_{v,a} = I$ for all $v$.


If we take the GNS representation \cite{KR} of $\cl C$ induced by $\tau$,
 then the image of $\cl C$ under this representation will be a
 quotient of $\cl C$ with all the same properties and the additional property
  that $\tau$ is a faithful trace on the quotient.

Now if, in addition, $p(a,b|v,w)$ belongs to the smaller family of perfect q-strategies,
 then by \cite[Theorem~5.3]{PSSTW} the C*-algebra $\cl C$ will be finite dimensional.
Hence, the second statement follows.

Finally, if $p(a,b|v,w)$ belongs to the smaller family of perfect loc-strategies,
then the C*-algebra $\cl C$ will be abelian, and hence, the first statement follows.
\end{proof}

\begin{remark} It is also possible to characterize the existence of perfect qa-strategies,
 but the proof is a bit long for here: $\cl G$ has a perfect qa-strategy
 if and only if there exists a unital *-homomorphism of $\cl A(\cl G)$
  into the von Neumann algebra $\cl R^{\omega}$.
\end{remark}

\ssec{Putting an order on  our *-algebra} 
The *-algebra $\bb C[\bb F(n,m)]$ also possesses an order defined as follows:
let $\cl P$ be the cone generated by all elements of the form $f^* f$ for $f \in \bb C[\bb F(n,m)].$
If $h,k \in \bb C[\bb F(n,m)]$ are self-adjoint elements, we write $h \le k$ if $k-h \in \cl P$.
Next, notice that $\cl P$ induces a cone on $\cl A(\cl G)$,
 which we regard as the positive elements, by setting
\[ \cl A(\cl G)^+ = \{ p + \cl I(\cl G):  p \in \cl P \}.\]
Given two self-adjoint elements $h,k\in\cl A(\cl G)$,
we again write $h \le k$ if and only if $k-h \in \cl A(\cl G)^+$.
In the language of Ozawa~\cite{Oz13} this makes $\cl A(\cl G)$ into a \df{semi-pre-C*-algebra}.

A self-adjoint vector subspace $V \subseteq \bb C[\bb F(n,m)]$ is called \df{hereditary} provided that $0 \le f \le h$ and $h \in V$ implies that $f \in V$.

\begin{prob} Let $\cl G$ be a synchronous game.
Find conditions on the game so that the 2-sided ideal $\cl I(\cl G)$ is hereditary.
\end{prob}

Later we will see an example of a game such that $\cl I(\cl G)$ is not hereditary. The following result shows why the hereditary condition is important.

\begin{prop} Let $\cl G$ be a synchronous game and let $\cl I(\cl G)$ be the ideal of the game.  Then $\cl I(\cl G)$ is a hereditary subspace of $\bb C[\bb F(n,m)]$ if and only if $\big(\cl A(\cl G)^+ \big) \cap \big( - \cl A(\cl G)^+ \big) = (0)$.
\end{prop}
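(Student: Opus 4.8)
The plan is to prove both implications by unwinding the definitions of the cone and of hereditary. Write $q\colon \bb C[\bb F(n,m)] \to \cl A(\cl G)$ for the quotient map, so that $\cl A(\cl G)^+ = q(\cl P)$ and a self-adjoint element $\bar h \in \cl A(\cl G)$ lies in $\cl A(\cl G)^+ \cap (-\cl A(\cl G)^+)$ iff there exist $p_1, p_2 \in \cl P$ with $\bar h = q(p_1) = -q(p_2)$, i.e. iff $p_1 + p_2 \in \cl I(\cl G)$ for some $p_1, p_2 \in \cl P$. Since $\cl I(\cl G)$ is self-adjoint and the order is defined via $\cl P$, the condition $\big(\cl A(\cl G)^+\big)\cap\big(-\cl A(\cl G)^+\big) = (0)$ is equivalent to: whenever $p_1, p_2 \in \cl P$ and $p_1 + p_2 \in \cl I(\cl G)$, then $p_1 \in \cl I(\cl G)$ (equivalently $q(p_1) = 0$). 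This reformulation is the technical heart of the argument and I would isolate it as the first step.

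For the forward direction, assume $\cl I(\cl G)$ is hereditary. Suppose $p_1, p_2 \in \cl P$ with $p_1 + p_2 \in \cl I(\cl G)$. In $\bb C[\bb F(n,m)]$ we then have $0 \le p_1 \le p_1 + p_2$ (since $p_2 \in \cl P$) and $p_1 + p_2 \in \cl I(\cl G)$, so heredity of $\cl I(\cl G)$ as a subspace of $\bb C[\bb F(n,m)]$ forces $p_1 \in \cl I(\cl G)$. By the reformulation above this gives $\big(\cl A(\cl G)^+\big)\cap\big(-\cl A(\cl G)^+\big) = (0)$.

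For the converse, assume $\big(\cl A(\cl G)^+\big)\cap\big(-\cl A(\cl G)^+\big) = (0)$ and let $f, h \in \bb C[\bb F(n,m)]$ be self-adjoint with $0 \le f \le h$ and $h \in \cl I(\cl G)$. From $0 \le f$ we get $f \in \cl P$, and from $f \le h$ we get $h - f \in \cl P$; also $f + (h-f) = h \in \cl I(\cl G)$. Applying the reformulation (with $p_1 = f$, $p_2 = h - f$) yields $f \in \cl I(\cl G)$, which is exactly heredity of $\cl I(\cl G)$.

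The step I expect to be the main obstacle is the first one: verifying carefully that $\big(\cl A(\cl G)^+\big)\cap\big(-\cl A(\cl G)^+\big) = (0)$ is genuinely equivalent to the implication ``$p_1, p_2 \in \cl P,\ p_1 + p_2 \in \cl I(\cl G) \Rightarrow p_1 \in \cl I(\cl G)$''. One must check that an arbitrary element of the intersection, a priori of the form $q(p_1) = -q(p_2)$ with $p_i \in \cl P$, really does produce such a pair with $p_1 + p_2 \in \cl I(\cl G)$ — this uses that $q(p_1) + q(p_2) = 0$ means $p_1 + p_2 \in \ker q = \cl I(\cl G)$ — and conversely that the implication forces every such $q(p_1)$ to vanish. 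Once this bookkeeping is in place, both directions are immediate from the definition of hereditary applied with the pair $(f, h) = (p_1, p_1 + p_2)$, using only that $\cl P$ is a cone (so $p_2 \in \cl P$ gives $p_1 \le p_1 + p_2$) and that $\cl I(\cl G)$ is a self-adjoint subspace.
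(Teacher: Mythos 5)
Your proof is correct and follows essentially the same approach as the paper's; the only cosmetic difference is that the paper phrases the same bookkeeping via self-adjoint $p,q\in\cl I(\cl G)$ with $0\le x+p\le p+q$ and argues both directions by contrapositive, whereas you phrase it via $p_1,p_2\in\cl P$ with $p_1+p_2\in\cl I(\cl G)$ and argue directly. The central reformulation you isolate is exactly the equivalence the paper establishes in its opening paragraph.
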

\begin{proof}  Let $x=x^* \in \bb C[\bb F(n,m)]$.  We begin by characterizing when the equivalence class $x+\cl I(\cl G)$ is contained in $ \cl A(\cl G)^+ \cap \big(- \cl A(\cl G)^+ \big)$. By definition, this occurs if and only if there are elements $p=p^*, q= q^*$ in $\cl I(\cl G)$ such that $x+p \ge 0$ and $-x+q \ge 0$. This is equivalent to $0 \le x+p \le p+q$.

Now suppose that $x=x^*$ and that the equivalence class $x+ \cl I(\cl G)$ is non-zero in $\cl A(\cl G)$. If the class is contained in $\big(\cl A(\cl G)^+ \big) \cap \big( - \cl A(\cl G)^+ \big)$ then choosing $p$ and $q$ as in the previous paragraph, the element $x+p$ demonstrates that $\cl I(\cl G)$ is not hereditary.

Conversely, if $\cl I(\cl G)$ is not hereditary, then there exists $x=x^* \notin \cl I(\cl G)$ and $q \in \cl I(\cl G)$ such that $0 \le x \le q$. The inequality $0\le x$ implies that $x+ \cl I(\cl G) \in \cl A(\cl G)^+$, while $0 \le q-x$ implies that
$q-x + \cl I(\cl G) = -x + \cl I(\cl G) \in \cl A(\cl G)^+$.  Clearly, this element is non-zero.
\end{proof}

  Given a subspace $V$ we let $V^h$ denote the smallest hereditary subspace that contains $V$ and call this space the \df{hereditary closure} of $V$.
We define the \df{hereditary *-algebra of the game $\cl G$} to be the quotient
\[ \cl A^h(\cl G) = \bb C[\bb F(n,m)]/\cl I^h(\cl G). \]
Note that $\cl A^h(\cl G)$ is a quotient of $\cl A(\cl G)$.

\begin{defn} We say that a game has a perfect hereditary strategy if $\cl A^h(\cl G)$ is nontrivial. Given graphs G and H, we write $G \stackrel{hered}{\longrightarrow} H$ if $\cl A^h(G,H)$ is nontrivial. We define the hereditary chromatic number of G by
\[ \chi_{hered}(G) = \min \{ c \mid \cl A^h(G, K_c) \ is \ nontrivial \}.\]
\end{defn}


We define the positive cone in $\cl A^h(\cl G)$ by setting
\[ \cl A^h(\cl G)^+ = \{ p+ \cl I^h(\cl G) : p \in \cl P \},\]
so that $\cl A^h(\cl G)$ is also a semi-pre-$C^*$-algebra.
The following is immediate:

\begin{prop}\label{propercone} Let $\cl G$ be a synchronous game, then $$\big(\cl A^h(\cl G)^+ \big) \cap \big( - \cl A^h(\cl G)^+ \big) = (0).$$
\end{prop}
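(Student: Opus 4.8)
The plan is to reduce Proposition \ref{propercone} to the already-proved Proposition characterizing when an ideal is hereditary (the one stating that $\cl I(\cl G)$ is hereditary iff $\big(\cl A(\cl G)^+\big)\cap\big(-\cl A(\cl G)^+\big)=(0)$), applied not to $\cl I(\cl G)$ but to its hereditary closure $\cl I^h(\cl G)$. The key observation I would make is that the proof of that earlier proposition never used any special feature of $\cl I(\cl G)$ beyond its being a self-adjoint (two-sided) ideal; the argument given there shows, for an arbitrary self-adjoint ideal $\cl J$ in $\bb C[\bb F(n,m)]$, that $\cl J$ is a hereditary subspace if and only if the induced cone on $\bb C[\bb F(n,m)]/\cl J$ is proper, i.e. intersects its negative only in $(0)$.

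So the first step is to observe that $\cl I^h(\cl G)$ is itself a two-sided *-ideal, not merely a hereditary subspace. This needs a small verification: $\cl I^h(\cl G)$ is defined as the smallest hereditary subspace containing $\cl I(\cl G)$, and a priori the hereditary closure of an ideal need only be a subspace; one must check it is closed under left and right multiplication by arbitrary algebra elements and under $*$. The cleanest route is to note that $\cl I^h(\cl G)$ can be built as an increasing union of subspaces obtained by alternately (a) taking the ideal generated and (b) taking the ``hereditary enlargement'' $\{f : 0\le f\le h \text{ for some } h \text{ already in the space}\}$, and to check that each operation preserves being a self-adjoint ideal once the other does — multiplication by a fixed element $g$ sends $0\le f\le h$ to $0\le g^*fg\le g^*hg$, so the hereditary enlargement of an ideal is again an ideal. (Alternatively, if the paper has already recorded somewhere that $\cl I^h(\cl G)$ is an ideal — it implicitly does, since it forms the quotient *-algebra $\cl A^h(\cl G)$ — one may simply cite that.)

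The second step is then purely formal: apply the earlier proposition's argument with $\cl J = \cl I^h(\cl G)$. Since $\cl I^h(\cl G)$ is by construction a hereditary subspace, the ``hereditary $\Rightarrow$ proper cone'' direction gives exactly $\big(\cl A^h(\cl G)^+\big)\cap\big(-\cl A^h(\cl G)^+\big)=(0)$, using that $\cl A^h(\cl G)^+$ was defined as $\{p+\cl I^h(\cl G):p\in\cl P\}$, matching the cone in that proposition verbatim. That completes the proof.

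The only real obstacle is the first step — confirming that passing to the hereditary closure of an ideal keeps it an ideal — and even that is routine given the computation $0\le f\le h \implies 0\le g^*fg\le g^*hg$ together with the fact that $\cl P$ is a cone and $\cl I(\cl G)$ is already *-closed. I would state this as a one-line lemma or fold it into the proof. Everything downstream is a direct invocation of the preceding proposition, which is why the excerpt labels the result ``immediate.'' I would write the proof as: ``One checks that $\cl I^h(\cl G)$ is a self-adjoint two-sided ideal (the hereditary enlargement of an ideal is an ideal, since $0\le f\le h$ implies $0\le g^*fg\le g^*hg$). Being a hereditary subspace by definition, the argument of Proposition [ref] applied to $\cl I^h(\cl G)$ in place of $\cl I(\cl G)$ yields the claim.''
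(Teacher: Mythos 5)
Your second step---applying the earlier proposition's argument with $\cl I^h(\cl G)$ in place of $\cl I(\cl G)$---is exactly the paper's intended ``immediate'' argument. Unpacked: if $x+\cl I^h(\cl G)\in \cl A^h(\cl G)^+\cap\big(-\cl A^h(\cl G)^+\big)$, write $x+\cl I^h(\cl G)=p+\cl I^h(\cl G)=-q+\cl I^h(\cl G)$ with $p,q\in\cl P$; then $p+q\in\cl I^h(\cl G)$ while $0\le p\le p+q$, so hereditarity (which $\cl I^h(\cl G)$ has by definition) forces $p\in\cl I^h(\cl G)$, i.e.\ $x+\cl I^h(\cl G)=0$.

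Your first step---verifying that $\cl I^h(\cl G)$ is a two-sided $*$-ideal---is, however, both unnecessary for this proposition and not established by the argument you give. It is unnecessary because the earlier proposition's proof never uses the ideal structure of $\cl I(\cl G)$, only that it is a self-adjoint vector subspace; the cone $\cl A^h(\cl G)^+$ and the assertion to be proved are purely statements about the quotient vector space, so hereditarity alone suffices. And the justification you offer has a gap: the computation $0\le f\le h\implies 0\le g^*fg\le g^*hg$ shows that the hereditary enlargement is stable under the conjugation $x\mapsto g^*xg$, not under one-sided multiplication $x\mapsto gx$, and thus does not establish that the hereditary enlargement of an ideal is an ideal. (Whether $\cl I^h(\cl G)$ is in fact a two-sided ideal is a genuine subtlety that the paper glosses over when it forms $\cl A^h(\cl G)$ as a quotient $*$-algebra, but it plays no role in the present proposition.) Since the ideal check is not used downstream, this gap does not invalidate your conclusion, but the faulty argument should be dropped rather than folded into the proof.
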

\noindent
Thus, the ``positive'' cone on $\cl A^h(\cl G)$ is now a \df{proper} cone and $\cl A^h(\cl G)$ is an ordered vector space.

\begin{prop}
\label{prop:K_n-->K_c-hered}
If $K_n \stackrel{hered}{\longrightarrow} K_c$ then $n \le c$. Consequently, $\chi_{hered}(K_n) =n.$  \end{prop}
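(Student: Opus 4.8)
The plan is to produce, from a hereditary homomorphism $K_n \stackrel{hered}{\longrightarrow} K_c$, a genuine algebraic homomorphism, hence a perfect deterministic strategy, hence an honest coloring of $K_n$ by $c$ colors, which forces $n \le c$. The point is that $\cl A^h(K_n, K_c)$ being nontrivial is a priori weaker than $\cl A(K_n, K_c)$ being nontrivial, but for the very rigid game $K_n \to K_c$ the hereditary closure should not destroy anything: I claim $\cl I^h(K_n,K_c) = \cl I(K_n,K_c)$, or at least that nontriviality of the hereditary quotient still yields a unital $*$-homomorphism to $\bb C$. Granting that, Theorem (part (1)) gives a perfect deterministic strategy for the $c$-coloring game on $K_n$, i.e.\ a proper $c$-coloring of $K_n$; since all $n$ vertices of $K_n$ are pairwise adjacent they get distinct colors, so $n \le c$. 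The ``consequently'' is then immediate: $\chi_{hered}(K_n) \le n$ because the identity map is an $n$-coloring (so $\cl A^h(K_n,K_c)$ is nontrivial for $c = n$, the projections $e_{v,a}$ mapping to a permutation matrix realization), and $\chi_{hered}(K_n) \ge n$ by what we just proved.

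The concrete steps I would carry out, in order: (i) Write $I = O = \{0,\dots,n-1\}$ for the $c$-coloring game on $K_n$ with $c$ colors — wait, inputs are the $n$ vertices and outputs are the $c$ colors — and recall the relations in $\cl A(K_n,K_c)$: for each vertex $v$, $\sum_{a} e_{v,a} = 1$ and the $e_{v,a}$ are projections; for adjacent $v \ne w$ (which is \emph{all} pairs in $K_n$) and every color $a$, $e_{v,a}e_{w,a} = 0$. (ii) Observe that in $\cl A^h(K_n,K_c)$ we still have $e_{v,a} \ge 0$, $\sum_a e_{v,a} = 1$, and, crucially, for $v\ne w$ the element $e_{v,a}e_{w,a}e_{v,a} = e_{v,a}(e_{w,a})e_{v,a}$ lies in the hereditary closure: indeed $0 \le e_{v,a}e_{w,a}e_{v,a} \le e_{v,a}e_{w,a}e_{v,a} + (\text{junk})$... more simply, $e_{v,a}e_{w,a}e_{v,a} = (e_{w,a}e_{v,a})^*(e_{w,a}e_{v,a}) \le \|e_{v,a}\|^2 e_{w,a}$-type bounds are not available in a mere $*$-algebra, so instead I use: $e_{v,a}e_{w,a}e_{v,a}$ is positive and $e_{v,a}e_{w,a}e_{v,a} \le e_{v,a} \cdot 1 \cdot e_{v,a}$? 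That is not obviously in $\cl I$. The clean move is rather (ii$'$): since $e_{w,a}e_{v,a} \in \cl I(K_n,K_c)$ and $\cl I^h$ contains $\cl I$, we get $e_{w,a}e_{v,a} \equiv 0$ and $e_{v,a}e_{w,a}\equiv 0$ already in $\cl A^h$ — the hereditary closure only \emph{enlarges} the ideal, so all original relations still hold. (iii) Therefore $\cl A^h(K_n,K_c)$ is a quotient of $\cl A(K_n,K_c)$, and if it is nontrivial then so is $\cl A(K_n,K_c)$; but that alone only gives $\chi_{alg}$, not a $\bb C$-point. So the substantive step is (iv): show a nontrivial $\cl A^h(K_n,K_c)$ admits a character to $\bb C$.

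For step (iv), the idea I would pursue is that the hereditary quotient, being an ordered vector space with a \emph{proper} positive cone (Proposition~\ref{propercone}), admits a state, and that on this particular algebra any state is multiplicative on the generators because of the rigidity of the relations. Concretely: extend $1 \mapsto 1$ to a positive linear functional $\phi$ on $\cl A^h(K_n,K_c)$ using the proper-cone structure (Hahn–Banach / the fact that a proper cone with order unit has states); then for each vertex $v$ the projections $e_{v,a}$ satisfy $\phi(e_{v,a}) \ge 0$ and $\sum_a \phi(e_{v,a}) = 1$, and for distinct vertices $v,w$ with $e_{v,a}e_{w,a} = 0$ the "synchronous / Cauchy–Schwarz" argument (as in the Proposition preceding Proposition~\ref{prop:reduce-to-synch-games}, applied to the GNS-type data of $\phi$) pins down $\phi(e_{v,a}) = \phi(e_{w,a})$ for all pairs, so the vector $(\phi(e_{0,a}))_a$ is a single probability vector; then one shows it must be $0/1$-valued because the supports must be pairwise disjoint across the $n$ vertices while being a common distribution, which is exactly a proper $c$-coloring of $K_n$ and forces $n \le c$.

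The main obstacle I anticipate is precisely step (iv): justifying that nontriviality of the hereditary quotient upgrades to a genuine $\bb C$-point (equivalently, a state that is a character). One expects this to follow from Proposition~\ref{propercone} (proper cone $\Rightarrow$ there are states) combined with the synchronicity Cauchy–Schwarz trick already in the paper, but making the GNS/Cauchy–Schwarz argument go through in the merely-$*$-algebraic (non-$C^*$) setting, and then squeezing the resulting probability vector to be $\{0,1\}$-valued using that $K_n$'s edges are \emph{all} pairs, is where the real work lies. An alternative, possibly cleaner route: argue directly that $\cl I^h(K_n,K_c)$ is already a proper $*$-ideal only when $n \le c$ by exhibiting, when $n > c$, a positive element (a sum of the "forbidden" products over a pigeonhole-forced repeated color) that hereditarily dominates the identity, thereby showing $1 \in \cl I^h$ and $\cl A^h(K_n,K_c) = 0$; this sidesteps characters entirely and is likely the intended argument.
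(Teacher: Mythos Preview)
Your main approach (step (iv), upgrading nontriviality of $\cl A^h(K_n,K_c)$ to a character to $\bb C$) has genuine gaps. First, the existence of a state on a semi-pre-$C^*$-algebra with proper cone is not automatic: one needs an Archimedean-type condition, and the paper's very distinction between $\cl I^h$ and $\cl I^c$ (Problem after the definition of $\cl A^c$) is precisely the question of whether such states/representations exist. Second, even granting a state $\phi$, your Cauchy--Schwarz step is wrong: from $e_{v,a}e_{w,a}=0$ one cannot conclude $\phi(e_{v,a})=\phi(e_{w,a})$ (two orthogonal projections in a matrix algebra with the trace state need not have equal traces), and the subsequent ``supports must be pairwise disjoint while being a common distribution'' argument therefore has no starting point. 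Finally, note that your attempt would, if it worked, prove $\chi(K_n)\le c$ directly from $\cl A^h(K_n,K_c)\ne 0$; since the paper later shows $\chi_{alg}(K_5)=4$ while $\chi_{hered}(K_5)=5$, the hereditary hypothesis is genuinely doing work beyond mere nontriviality of $\cl A$, and your route never actually uses it.

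Your last-paragraph ``alternative route'' is the correct one, and it is exactly what the paper does, though you have not identified the key computation. The paper sets $P_i=\sum_{v} E_{v,i}$; the relations $E_{v,i}E_{w,i}=0$ for $v\ne w$ make each $P_i$ a self-adjoint idempotent, hence so is $Q_i=I-P_i$. Summing over colors gives $\sum_i P_i=\sum_v\sum_i E_{v,i}=nI$, so $\sum_i Q_i=(c-n)I$. Since $\sum_i Q_i=\sum_i Q_i^*Q_i$ lies in the positive cone, if $c<n$ then $I$ lies in both $\cl A^h(K_n,K_c)^+$ and its negative, and Proposition~\ref{propercone} (the proper-cone property of the \emph{hereditary} quotient, which is where the hypothesis enters) forces $I=0$. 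The upper bound $\chi_{hered}(K_n)\le n$ comes from $\chi_{hered}\le\chi_{qc}\le\chi$, not from an explicit matrix realization.
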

\begin{proof} Suppose, that $K_n \stackrel{hered}{\longrightarrow} K_c$. 
We let $E_{v,i} = e_{v,i} + \cl I^h(K_n,K_c) \in \cl A^h(K_n,K_c)$, where $\cl A^h$ is defined as in \S\ref{sec:alg-sync-game}. Then we have that $\sum_{i=0}^{c-1} E_{v,i} = I$ for all $v.$  Set $P_i = \sum_v E_{v,i}.$  Since  $E_{v,i}E_{w,i} =0,$ we have that $P_i=P_i^* = P_i^2.$  Hence,  $Q_i = I-P_i$ is also a ``projection'' in the sense that $Q_i=Q_i^* = Q_i^2.$ Now $\sum_i P_i = cI - \sum_i Q_i.$  But also,
\[ \sum_i P_i = \sum_v \sum_i E_{v,i} = nI.\]
Hence,  $\sum_i Q_i^2 = \sum_i Q_i = (c-n)I.$  By definition, $\sum_i Q_i^2\in\cl A^h(G,H)^+$. If $c<n$, then $(c-n)I\in -(\cl A^h(G,H)^+)$ and by Proposition \ref{propercone}, we have $I=0$; that is, $1\in\cl I^h(K_n,K_c)$ which contradicts our hypothesis that $K_n \stackrel{hered}{\longrightarrow} K_c$. Hence, $c\ge n$. 
This shows that $\chi_{hered}(K_n)\ge n$.

For the other inequality, note that if $K_n \stackrel{hered}{\longrightarrow} K_c$, then $\chi_{hered}(K_n) \le c$.
By the results of \cite{OP2}, if  $G$ is any graph with $c= \chi_{qc}(G)$, then there is a unital *-homomorphism from $\cl A(G, K_c)$ into a C*-algebra with a trace. The kernel of this homomorphism is a hereditary ideal and so must contain $\cl I^h(G,K_c)$. Hence, this latter ideal is proper and $c \ge \chi_{hered}(G)$.
Thus,  $\chi_{hered}(K_n) \le \chi_{qc}(K_n) \le \chi(K_n) =n$. Hence, $n= \chi_{hered}(K_n) \le c$.
\end{proof}

The above proof also shows that:
\begin{prop} If $K_n \stackrel{alg}{\to} K_c$ and $-I \notin \cl A(K_n,K_c)^+$, then $n \le c$.
\end{prop}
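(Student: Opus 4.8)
The strategy is to run the proof of Proposition \ref{prop:K_n-->K_c-hered} verbatim, but to stop earlier — at the point where only the ideal $\cl I(K_n,K_c)$ (not its hereditary closure) is used, and where the only property of the positive cone invoked is that it is proper.

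First I would set $E_{v,i} = e_{v,i} + \cl I(K_n, K_c) \in \cl A(K_n, K_c)$ (now using $\cl I$, not $\cl I^h$), so that $\sum_{i=0}^{c-1} E_{v,i} = I$ for each $v$. As in the proof above, set $P_i = \sum_v E_{v,i}$; using the relations $E_{v,i} E_{w,i} = 0$ for $v \ne w$ (which hold in $\cl A(K_n,K_c)$ because distinct vertices of $K_n$ are adjacent, so $\lambda(v,w,i,i) = 0$) together with $E_{v,i} = E_{v,i}^* = E_{v,i}^2$, one checks $P_i = P_i^* = P_i^2$. Hence $Q_i = I - P_i$ satisfies $Q_i = Q_i^* = Q_i^2$, and the same bookkeeping as before gives
\[ \sum_i Q_i^2 = \sum_i Q_i = (c-n) I \]
in $\cl A(K_n, K_c)$.

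Now suppose for contradiction that $c < n$. Since each $Q_i^2 = f^* f$ with $f = Q_i$, the element $\sum_i Q_i^2$ lies in $\cl A(K_n,K_c)^+$, so $(c-n) I \in \cl A(K_n,K_c)^+$, i.e. $-I = \tfrac{1}{n-c}(c-n)I \in \cl A(K_n,K_c)^+$ (scaling by the positive scalar $\tfrac{1}{n-c}$ keeps us in the cone). This contradicts the hypothesis $-I \notin \cl A(K_n,K_c)^+$. Therefore $c \ge n$.

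**Remark on the obstacle.** There is essentially no obstacle here: the content is already contained in the displayed computation in Proposition \ref{prop:K_n-->K_c-hered}, and the only adjustment is to notice that the argument never needed $\cl I^h$ — it needed $\sum_i Q_i^2 \in (\text{positive cone})$ and the absence of $-I$ from that cone. In the hereditary case the latter was automatic by Proposition \ref{propercone}; here it is exactly the added hypothesis $-I \notin \cl A(K_n,K_c)^+$. The one point worth stating carefully is that the relations $E_{v,i}^2 = E_{v,i}$ and $E_{v,i}E_{w,i} = 0$ survive in the non-hereditary quotient $\cl A(K_n,K_c)$ — the first because each $e_{v,i}$ is a projection in $\bb C[\bb F(n,m)]$ by \eqref{eq:efromu}, the second because $(i,i)$ is a losing output pair for the edge $(v,w) \in E(K_n)$ — and that scaling a cone element by a positive rational (here $\tfrac{1}{n-c}$) stays in the cone, which is immediate since $\cl P$ is a cone.
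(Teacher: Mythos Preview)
Your proposal is correct and is exactly what the paper intends: it states the proposition with the prefatory remark ``The above proof also shows that,'' giving no separate argument, and your write-up makes explicit precisely the observation that the computation $\sum_i Q_i^2=(c-n)I$ from Proposition~\ref{prop:K_n-->K_c-hered} uses only the relations in $\cl I(K_n,K_c)$, with the hypothesis $-I\notin\cl A(K_n,K_c)^+$ replacing the appeal to Proposition~\ref{propercone}.
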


\ssec{Pre-$C^*$-algebras}
 The next natural question is whether or not $\cl A^h(\cl G)$  is a \df{pre-$C^*$-algebra} in the sense of \cite{Oz13}. The answer is that this cone will need to satisfy one more hypothesis.

\begin{defn} Let $\cl G$ be a synchronous game and let $\cl I^c(\cl G)$ \index{$\cl I^c(\cl G)$}
 denote the intersection of the kernels of all unital $*$-homomorphisms
 from $\bb C[\bb F(n,m)]$ into the bounded operators on a Hilbert space (possibly 0 dimensional) that vanish on $\cl I(\cl G)$. Let $\cl A^c(\cl G) = \bb C[\bb F(n,m)]/ \cl I^c(\cl G)$.
\end{defn}

\begin{prop} Let $\cl G$ be a synchronous game.  Then  $\cl I^h(\cl G) \subseteq \cl I^c(\cl G)$ and
\[ \cl I^c(\cl G) = \{ x \in \bb C[\bb F(n,m)] \, : x^*x + \cl I(\cl G) \le \epsilon 1 + \cl I(\cl G), \, \forall \epsilon > 0, \, \epsilon \in \bb R \}.\]
There exists a (non-zero) Hilbert space $\cl H$ and a unital *-homomorphism $\pi: \bb C[\bb F(n,m)] \to B(\cl H)$ that vanishes on $\cl I(\cl G)$ if and only if $\cl A^c(\cl G) \ne (0)$.
\end{prop}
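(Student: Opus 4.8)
The plan is to prove the proposition by analyzing each of its three claims separately, with the characterization of $\cl I^c(\cl G)$ as the central piece and the other two following as corollaries.

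First I would establish the set-theoretic description of $\cl I^c(\cl G)$. Write $\cl J$ for the set on the right-hand side, namely those $x \in \bb C[\bb F(n,m)]$ with $x^*x + \cl I(\cl G) \le \epsilon 1 + \cl I(\cl G)$ for all real $\epsilon > 0$. For the inclusion $\cl I^c(\cl G) \subseteq \cl J$: if $\pi$ is any unital $*$-homomorphism into $B(\cl H)$ vanishing on $\cl I(\cl G)$, then $\pi$ factors through $\cl A(\cl G)$ and, since $*$-homomorphisms into $B(\cl H)$ are positive maps, for $x \in \cl I^c(\cl G)$ we would get $\pi(x) = 0$, hence $\pi(x^*x) = 0 \le \epsilon \pi(1)$. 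The subtle point is whether this for-all-$\pi$ condition forces the inequality $x^*x \le \epsilon 1$ to hold \emph{in the abstract order} of $\cl A(\cl G)$; this is exactly where one invokes an Archimedean/Positivstellensatz-type fact for the group algebra $\bb C[\bb F(n,m)]$. For the reverse inclusion $\cl J \subseteq \cl I^c(\cl G)$: if $x \in \cl J$ and $\pi$ is a unital $*$-homomorphism into $B(\cl H)$ vanishing on $\cl I(\cl G)$, then applying $\pi$ to $x^*x + p_\epsilon \le \epsilon 1$ (for suitable $p_\epsilon \in \cl I(\cl G)$, where the left side is a sum of squares plus something in $\cl I(\cl G)$) gives $\pi(x)^*\pi(x) \le \epsilon I$ in $B(\cl H)$ for every $\epsilon > 0$, whence $\|\pi(x)\|^2 \le \epsilon$ for all $\epsilon$, so $\pi(x) = 0$. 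Since $\pi$ was arbitrary, $x \in \cl I^c(\cl G)$.

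Next I would prove $\cl I^h(\cl G) \subseteq \cl I^c(\cl G)$. Given $x = x^* \in \cl I^h(\cl G)$, by definition of hereditary closure there is $q \in \cl I(\cl G)$ with $0 \le x \le q$ (one should first reduce to self-adjoint generators of $\cl I^h$, then note a general element $x$ of the hereditary closure satisfies $x^*x \le q$ for some $q \in \cl I(\cl G)$, possibly after enlarging). Then in $\cl A(\cl G)$ we have $x^*x + \cl I(\cl G) \le q + \cl I(\cl G) = \cl I(\cl G) = 0 \le \epsilon 1 + \cl I(\cl G)$ for all $\epsilon > 0$, so $x \in \cl J = \cl I^c(\cl G)$ by the first part.

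Finally, the last claim is essentially a tautology given the definitions: if some unital $*$-homomorphism $\pi : \bb C[\bb F(n,m)] \to B(\cl H)$ with $\cl H \neq 0$ vanishes on $\cl I(\cl G)$, then $\pi(1) = I_{\cl H} \neq 0$, so $1 \notin \cl I^c(\cl G)$ (as $\cl I^c(\cl G)$ is contained in $\ker \pi$), hence $\cl A^c(\cl G) = \bb C[\bb F(n,m)]/\cl I^c(\cl G) \neq (0)$. Conversely, if $\cl A^c(\cl G) \neq (0)$, then since $\cl A^c(\cl G)$ is by construction a residually-finite-dimensional-type quotient — more precisely, $\cl I^c(\cl G)$ is an intersection of kernels of such $\pi$'s — a nontrivial quotient forces at least one such $\pi$ to be nonzero on $1$, i.e.\ to exist with $\cl H \neq 0$; one must be slightly careful to rule out the degenerate case where the only such homomorphism is the one into $B(0)$, but that is handled by observing that if $\bb C[\bb F(n,m)] \to B(0)$ were the only option then $\cl I^c(\cl G)$ would be all of $\bb C[\bb F(n,m)]$ and $\cl A^c(\cl G) = (0)$.

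I expect the main obstacle to be the first part, specifically justifying that the order-theoretic inequality $x^*x + \cl I(\cl G) \le \epsilon 1 + \cl I(\cl G)$ in the semi-pre-C*-algebra $\cl A(\cl G)$ is equivalent to $\pi(x) = 0$ for \emph{all} representations $\pi$ vanishing on $\cl I(\cl G)$. This is a Positivstellensatz statement — that the algebraic positive cone of $\bb C[\bb F(n,m)]$ (sums of products $f^*f$) together with the Archimedean cut-off detects exactly the representation-theoretic positivity — and it relies on the fact that $\bb C[\bb F(n,m)]$ is a $*$-algebra with sufficiently many bounded representations (each unitary generator $u_v$ having finite order $m$ makes everything bounded automatically, so there is no subtlety about unbounded elements). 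I would cite the relevant result of Ozawa~\cite{Oz13} on semi-pre-C*-algebras for this step rather than reprove it.
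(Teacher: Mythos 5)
Your approach to the characterization $\cl I^c(\cl G) = \cl J$ (citing Ozawa's result on the ideal of infinitesimal elements for the hard direction and applying representations directly for the easy direction) matches what the paper does, and your handling of the final equivalence is fine.

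However, your proof of $\cl I^h(\cl G) \subseteq \cl I^c(\cl G)$ has a genuine gap, and it's worth noting that the paper takes an entirely different and cleaner route here. You attempt to go element-by-element: take $x \in \cl I^h(\cl G)$ and claim ``by definition of hereditary closure there is $q \in \cl I(\cl G)$ with $0 \le x \le q$'' (or, in your parenthetical fallback, that $x^*x \le q$ for some $q \in \cl I(\cl G)$). Neither of these is the definition of the hereditary closure, and neither is obviously true. The hereditary closure $\cl I^h$ is the \emph{smallest hereditary subspace} containing $\cl I$; building it up iteratively requires taking linear spans at each stage and possibly iterating, so there is no reason a generic element of $\cl I^h$ should be directly squeezed between $0$ and an element of $\cl I$. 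Moreover, even if you had $0 \le x \le q$ with $q \in \cl I$, the step to $x^*x + \cl I \le q + \cl I$ does not follow — $0 \le x \le q$ controls $x$, not $x^*x = x^2$, and the implication $x \le q \Rightarrow x^2 \le q$ (or $\le q^2$) is false in this order-theoretic setting. The paper avoids all of this by a one-line structural observation: the kernel of any unital $*$-homomorphism into $B(\cl H)$ is a hereditary ideal (if $0 \le f \le h$ and $\pi(h)=0$ then $0 \le \pi(f) \le 0$), an intersection of hereditary ideals is hereditary, so $\cl I^c(\cl G)$ is itself a hereditary ideal containing $\cl I(\cl G)$, and therefore it contains the smallest such subspace $\cl I^h(\cl G)$. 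This sidesteps any need to describe elements of $\cl I^h$ explicitly — which is exactly the description you would need and do not have.
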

\begin{proof} The kernel of every *-homomorphism is a hereditary ideal and the intersection of hereditary ideals is a hereditary ideal, hence $\cl I^c(\cl G)$ is a hereditary ideal containing $\cl I(\cl G)$. So, $\cl I^h(\cl G) \subseteq \cl I^c(\cl G)$.

  We have that $x \in \cl I^c(\cl G)$ if and only if $x + \cl I(\cl G)$ is in the kernel of every *-homomorphism of $\cl A(\cl G)$ into the bounded operators on a Hilbert space. In \cite[Theorem~1]{Oz13} it is shown that this is equivalent to $x+ \cl I(\cl G)$ being in the ``ideal of infinitesimal elements'' of $\cl A(\cl G)$, which is the ideal defined by the right-hand side of the above formula. The last result comes from the fact that the ideal of infinitesimal elements is exactly the intersection of the kernels of all such representations.
\end{proof}

\begin{defn} We say that a game $\cl G$ has a {\bf perfect C*-strategy} provided that $\cl A^c(\cl G)$ is nontrivial. Following \cite{OP2}, we write $G \stackrel{C^*}{\longrightarrow} H$ provided that for given graphs $G$ and $H$, the algebra $\cl A^c(G,H)$ is nontrivial. We define the C*-chromatic number of G to be
\[ \chi_{C^*}(G) = \min \{ c \mid \cl A^c(G, K_c) \ is \ nontrivial \}.\]

\end{defn}

The following is immediate.

\begin{prop} Let $G$ be a graph. Then $\chi_{qc}(G) \ge \chi_{C^*}(G) \ge \chi_{hered}(G) \ge \chi_{alg}(G)$.
\end{prop}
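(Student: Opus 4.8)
The plan is to derive all three inequalities from the chain of two-sided ideal inclusions
\[ \cl I(G,K_c)\ \subseteq\ \cl I^h(G,K_c)\ \subseteq\ \cl I^c(G,K_c) \]
in $\bb C[\bb F(n,m)]$, valid for every $c$. The first inclusion is immediate from $V\subseteq V^h$, and the second is part of the Proposition on $\cl A^c$ above. Since an ideal is all of $\bb C[\bb F(n,m)]$ exactly when it contains $1$, the statement ``$\cl A^c(G,K_c)$ is nontrivial'' means precisely ``$1\notin\cl I^c(G,K_c)$'', and similarly for $\cl A^h$ and $\cl A$; so the inclusions give, for each fixed $c$,
\[ \cl A^c(G,K_c)\neq(0)\ \Longrightarrow\ \cl A^h(G,K_c)\neq(0)\ \Longrightarrow\ \cl A(G,K_c)\neq(0), \]
i.e.\ a nonzero quotient forces the ambient algebra to be nonzero. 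Applying this with $c=\chi_{C^*}(G)$ makes the left-hand algebra nontrivial by definition, hence the middle and right ones too, so $\chi_{hered}(G)\le c$ and $\chi_{alg}(G)\le c$. This proves $\chi_{C^*}(G)\ge\chi_{hered}(G)\ge\chi_{alg}(G)$ in one stroke; alternatively, $\chi_{hered}(G)\ge\chi_{alg}(G)$ follows directly from the remark in the text that $\cl A^h(\cl G)$ is a quotient of $\cl A(\cl G)$.

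For the remaining inequality $\chi_{qc}(G)\ge\chi_{C^*}(G)$, set $c=\chi_{qc}(G)$, so the $c$-coloring game $\cl G$ for $G$ (equivalently the graph homomorphism game from $G$ to $K_c$) admits a perfect qc-strategy. By the third part of the structure theorem above there is a unital $C^*$-algebra $\cl C$, necessarily nonzero since it carries a faithful tracial state, together with a unital $*$-homomorphism $\pi\colon\cl A(G,K_c)\to\cl C$. Composing $\pi$ with the quotient map $\bb C[\bb F(n,m)]\to\cl A(G,K_c)$ and with any faithful representation $\cl C\hookrightarrow B(\cl H)$ yields a unital $*$-homomorphism $\bb C[\bb F(n,m)]\to B(\cl H)$ that vanishes on $\cl I(G,K_c)$, and $\cl H\neq(0)$ because $\cl C\neq(0)$. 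By the Proposition on $\cl A^c$, the existence of such a representation is equivalent to $\cl A^c(G,K_c)\neq(0)$; hence $G\stackrel{C^*}{\longrightarrow}K_c$, i.e.\ $\chi_{C^*}(G)\le c=\chi_{qc}(G)$.

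The only points needing attention are routine bookkeeping: confirming that nontriviality really does propagate down the tower of quotients (clear from the ideal inclusions, since $1\notin\cl I^c$ forces $1\notin\cl I^h$ and $1\notin\cl I$), and making sure the Hilbert space produced in the last step is nonzero so that the cited Proposition applies. I anticipate no genuine obstacle: the statement is essentially a repackaging of the inclusions $\cl I\subseteq\cl I^h\subseteq\cl I^c$ together with the characterization of perfect qc-strategies.
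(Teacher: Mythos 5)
Your proof is correct and is exactly the expansion of what the paper leaves implicit (the paper states only ``the following is immediate''): the chain of ideal inclusions $\cl I\subseteq\cl I^h\subseteq\cl I^c$ gives the last two inequalities, and the characterization of perfect qc-strategies via a trace-carrying $C^*$-algebra, composed with a faithful representation, gives $\chi_{qc}\ge\chi_{C^*}$. One small wrinkle worth tightening: applying the inclusion chain only at $c=\chi_{C^*}(G)$ yields $\chi_{C^*}\ge\chi_{hered}$ and $\chi_{C^*}\ge\chi_{alg}$, but \emph{not} $\chi_{hered}\ge\chi_{alg}$ on its own, so your ``in one stroke'' claim is a slight overstatement; your ``alternatively'' remark (or applying the inclusion $\cl I\subseteq\cl I^h$ again at $c=\chi_{hered}(G)$) is what actually closes that gap, and is needed rather than optional.
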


This motivates the following question.

\begin{prob} Let $\cl G$ be a synchronous game. Is $\cl I^c(\cl G) = \cl I^h(\cl G)$?
\end{prob}

\begin{prob} If $\cl I^h(\cl G) \ne \bb C[\bb F(n,m)]$, then does there exist
a non-zero Hilbert space $\cl H$ and a unital *-homomorphism,
$\pi: \bb C[\bb F(n,m)] \to B(\cl H)$
such that $\pi(\cl I^h(\cl G)) = (0)$, that is, if $\cl I^h(\cl G) \ne \bb C[\bb F(n,m)]$,
then is $\cl I^c(\cl G) \ne \bb C[\bb F(n,m)]$?
\end{prob}

\ssec{Determining if an ideal is hereditary}
\label{sec:computingHered}
Here we  mention some literature on determining if an ideal ${\cl I}$ is
hereditary and the issue of  computing its ``hereditary closure.''
In the real algebraic geometry literature, a hereditary ideal is
called a real ideal.
For a finitely generated left ideal $\cl I$ in ${\bb R}(\bb F(k))$
the papers \cite{CHKM13, CHKMN14} present a theory and a numerical algorithm
to test (up to numerical error) if  $\cl I$ is hereditary.
The algorithm also computes the ``hereditary radical'' of $\cl I$.
The computer algorithm relies on numerical optimization (semidefinite programming) and
hence it is not exact but approximate.

For two sided ideals \cite{CHMN15}  and \cite{KVV}  contain some theory.
Also the first author and Klep developed and crudely implemented a hereditary 
ideal algorithm under NCAlgebra. However, it is too memory consuming to be
effective, so we leave this topic for future work.

A moral one can draw from this literature is that computing hereditary closures
is not broadly effective at this moment.

\subsection{Clique Numbers}

 The clique number of a graph $\omega(G)$ is defined as the size of the largest complete subgraph of G. It is not hard to see that G contains a complete subgraph of size c if and only if there is a graph homomorphism from $K_c$ to G. Hence, there is a parallel theory of quantum clique numbers that we shall not pursue here, other than to remark that for each of the cases $t \in \{ loc, q, qa, qc, C^*, hered, alg \}$ we define the {\bf t-clique number} of g by
 \[ \omega_t(G) = \max \{ c \mid K_c \stackrel{t}{\longrightarrow} G \},\]
 so that
 \begin{multline*} \omega(G)=\omega_{loc}(G) \le \omega_q(G) \le \omega_{qa}(G) \\ \le \omega_{qc}(G) \le \omega_{C^*}(G) \le \omega_{hered}(G) \le \omega_{alg}(G).\end{multline*}

Lovasz \cite{Lo} introduced his theta function $\vartheta (G)$ of a graph. The famous Lovasz sandwich theorem \cite{GR} says that for every graph $G$, if $\overline{G}$ denotes its graph complement, then
$\omega(G) \le \vartheta (\overline{G}) \le \chi(G)$.
In \cite[Proposition~4.2]{OP2} \label{thetainequality} they showed the following improvement of the Lovasz sandwich theorem:
 \[ \omega_{C^*}(G) \le \vartheta (\overline{G}) \le \chi_{C^*}(G).\]
 We shall show later, $\chi_{alg}(K_5) =4$, while $\vartheta(\overline{K_5})=5.$
Hence the sandwich inequality fails for the algebraic version.

This motivates the following problem:
\begin{prob} Is $\omega_{hered}(G) \le \vartheta(\overline{G}) \le \chi_{hered}(G)$ for all graphs?
\end{prob}


\section{The case of 1, 2 and  3 colors}
\label{sec:123colors}

It is a classic result that deciding if $\chi(G) \le 3$ is an NP-complete problem. In \cite{Ji} it was shown that deciding if $\chi_{q}(G) \le 3$ is NP-hard, and, in particular, there is no known algorithm for deciding if this latter inequality is true.
For these reasons it is interesting to see what can be said about the new inequalities, $\chi_{C^*}(G) \le 3, \,\, \chi_{hered}(G) \le 3,$ and $\chi_{alg}(G) \le 3$.  Addressing the first two inequalities would require one to compute $\cl I^c(G, K_3)$ and $\cl I^h(G, K_3)$, and unfortunately these ideals contain elements not just determined by simple algebraic relations.
However, studying $\cl A(G, K_2)$  and $\cl A(G, K_3)$
is rewarding, as we shall see now see.
 Throughout the section, we use the notation $E_{v,i} = e_{v,i} + \cl I(K_n,K_c)$.

\begin{prop} Let $G$ be a graph. Then $\chi_{alg}(G)=1$ if and only if $G$ is the empty graph. Hence, $\chi_{alg}(G)=1 \iff \chi(G)=1.$
\end{prop}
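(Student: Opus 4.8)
The plan is to unwind both directions from the definition $\chi_{alg}(G)=1 \iff \cl A(G,K_1)$ is nontrivial, and then to observe that $\chi(G)=1$ is likewise equivalent to $G$ being the empty graph (i.e.\ having no edges), so the two biconditionals collapse to the same statement. For the forward direction I would argue the contrapositive: suppose $G$ has an edge $(v,w)\in E(G)$. In $\bb C[\bb F(n,1)]$ the group $\bb F(n,1)$ is trivial, so $\bb C[\bb F(n,1)]\cong\bb C$ and the only generator available in each slot is $e_{v,0}=1$. The defining relation of $\cl I(G,K_1)$ includes $e_{v,0}e_{w,0}$ whenever $\lambda(v,w,0,0)=0$, which is exactly the condition that $(v,w)$ is an edge (two adjacent vertices cannot receive the same color). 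Hence $1=e_{v,0}e_{w,0}\in\cl I(G,K_1)$, so $\cl A(G,K_1)=(0)$ is trivial and $\chi_{alg}(G)\neq 1$. For the reverse direction, if $G$ is the empty graph then no relation $e_{v,a}e_{w,b}$ with $\lambda(v,w,a,b)=0$ is forced by edges; the only vanishing of $\lambda$ in a synchronous game with one output is $\lambda(v,v,a,b)=0$ for $a\neq b$, which is vacuous when $m=1$. So $\cl I(G,K_1)=(0)$ and $\cl A(G,K_1)\cong\bb C\neq(0)$, giving $\chi_{alg}(G)=1$ (it cannot be $0$ since $K_0$ makes no sense / the coloring game with no colors has no perfect strategy, so $1$ is the least admissible value).

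For the final clause I would simply note the classical fact that $\chi(G)=1$ iff $G$ has no edges: a single color properly colors $G$ precisely when no two vertices are adjacent. Combining this with the equivalence just proved for $\chi_{alg}$ yields $\chi_{alg}(G)=1 \iff G \text{ empty} \iff \chi(G)=1$.

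I do not expect a genuine obstacle here; the only point requiring a little care is bookkeeping about the group $\bb F(n,m)$ when $m=1$ (it is trivial, so the group algebra is just $\bb C$ and every $u_v$ and every $e_{v,0}$ equals the identity), and matching the combinatorial condition ``$(v,w)\in E(G)$'' with ``$\lambda(v,w,0,0)=0$'' in the $c$-coloring game. Once those identifications are made explicit, both implications are one-line computations, and the result follows.
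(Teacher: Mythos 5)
Your proof is correct and follows essentially the same route as the paper's: both observe that with one color the single projection $e_{v,0}$ (which the paper calls $E_{v,1}$) must equal the identity, so an edge $(v,w)$ forces $1 = e_{v,0}e_{w,0} \in \cl I(G,K_1)$, collapsing the algebra to zero. Your version is slightly more explicit about the trivial group $\bb F(n,1)$ and about the converse direction, but there is no substantive difference in approach.
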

\begin{proof} For each vertex we only have one idempotent $E_{v,1}$ and since these sum to the identity, necessarily $E_{v,1}=I$. But if there is an edge $(v,w)$ then $I=I \cdot I = E_{v,1}E_{w,1}=0$.
\end{proof}

\begin{prop}\label{prop:alg=2}  Let $G$ be a connected graph on more than one vertex.  Then $\chi_{alg}(G)=2 \iff \chi(G)=2.$
\end{prop}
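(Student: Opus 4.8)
The plan is to prove the two implications separately. The reverse implication, $\chi(G)=2\Rightarrow\chi_{alg}(G)=2$, is immediate: since $G$ is connected on more than one vertex it contains an edge, so $G$ is not the empty graph and the preceding proposition gives $\chi_{alg}(G)\neq 1$; on the other hand a $2$-coloring of $G$ is a perfect deterministic strategy for the $2$-coloring game, so by part~(1) of the Theorem there is a unital $*$-homomorphism $\cl A(G,K_2)\to\CC$, hence $\cl A(G,K_2)$ is nontrivial and $\chi_{alg}(G)\le 2$ (equivalently, one may just cite $\chi_{alg}(G)\le\chi_{qc}(G)\le\chi(G)$). Therefore $\chi_{alg}(G)=2$. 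The substantive direction is $\chi_{alg}(G)=2\Rightarrow\chi(G)=2$, and it amounts to showing that nontriviality of $\cl A(G,K_2)$ forces $G$ to be bipartite.

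To set this up, assume $\cl A(G,K_2)$ is nontrivial and write $\bar u_v$ for the image in $\cl A(G,K_2)$ of the group generator $u_v$, so that $\bar u_v^2=1$. Since $m=2$ we have $\omega=-1$, hence $e_{v,0}=\tfrac12(1+u_v)$ and $e_{v,1}=\tfrac12(1-u_v)$; the relations $e_{v,0}e_{v,1}=e_{v,1}e_{v,0}=0$ arising from the input pair $(v,v)$ already hold in $\CC[\bb F(n,2)]$, so the ideal $\cl I(G,K_2)$ is generated precisely by the elements $e_{v,0}e_{w,0}$ and $e_{v,1}e_{w,1}$ with $(v,w)\in E(G)$. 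Thus in $\cl A(G,K_2)$ one has $(1+\bar u_v)(1+\bar u_w)=0$ and $(1-\bar u_v)(1-\bar u_w)=0$ for every edge $(v,w)$; subtracting these two identities gives $2(\bar u_v+\bar u_w)=0$, and dividing by the scalar $2$ — legitimate because $\cl A(G,K_2)$ is an algebra over $\CC$ — yields the key edge relation $\bar u_w=-\bar u_v$.

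Next I would propagate this sign relation around a cycle. If $G$ contained an odd cycle $v_0,v_1,\dots,v_\ell=v_0$ with $\ell$ odd, then the edge relations give $\bar u_{v_i}=(-1)^i\bar u_{v_0}$, and closing the cycle forces $\bar u_{v_0}=(-1)^\ell\bar u_{v_0}=-\bar u_{v_0}$, so $2\bar u_{v_0}=0$; multiplying by $\bar u_{v_0}$ and using $\bar u_{v_0}^2=1$ gives $2\cdot 1=0$ in $\cl A(G,K_2)$, whence $1=\tfrac12(2\cdot 1)=0$ and $\cl A(G,K_2)=(0)$, contradicting nontriviality. Hence $G$ has no odd cycle, i.e.\ $G$ is bipartite and $\chi(G)\le 2$; since $G$ has an edge, $\chi(G)=2$.

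I do not anticipate a genuine obstacle here. The two points that need care are the first reduction — verifying that the defining relations of the $2$-coloring game collapse to the single linear edge relation $\bar u_v=-\bar u_w$ — and the repeated use of characteristic $0$ (dividing by $2$), which is exactly the mechanism that turns ``$G$ contains an odd cycle'' into ``$\cl A(G,K_2)=(0)$.'' This is precisely why $\chi_{alg}$ mimics $\chi$ for two colors, in contrast with the three-color case treated next.
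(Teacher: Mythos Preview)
Your proof is correct and rests on the same algebraic fact as the paper's, namely that an edge $(v,w)$ forces a ``flip'' relation in $\cl A(G,K_2)$; you express it as $\bar u_w=-\bar u_v$ for the unitary generators, while the paper works with the idempotents and shows $E_{v,i}=E_{w,i+1}$ (these are equivalent since $u_v=e_{v,0}-e_{v,1}$). The packaging differs: you argue by contradiction, propagating the sign around a putative odd cycle to force $2\cdot 1=0$, hence bipartiteness; the paper instead fixes a base vertex $v$, propagates along paths to see that every $E_{w,0}$ equals one of $P_0:=E_{v,0}$ or $P_1:=E_{v,1}$, and then \emph{constructs} the $2$-coloring by declaring the color of $w$ according to which one it is. Your route makes the dependence on characteristic zero (division by $2$) more explicit and invokes the standard odd-cycle criterion for bipartiteness; the paper's route is slightly more direct in that it produces the coloring without that criterion, though it tacitly uses the same nontriviality to guarantee $P_0\neq P_1$.
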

\begin{proof}  First assume that $\chi_{alg}(G) =2$. Fix a vertex $v$ and set $P_0 = E_{v,0}$ and $P_1 = E_{v,1}.$  Note $P_0+ P_1 = I.$ Let $(v,w) \in E(G)$, then  $P_0E_{w,0} =0$ and $P_1E_{w,1} =0.$ Hence,  $E_{w,0}= (P_0+P_1)E_{w,0} = P_1E_{w,0}$ and similarly, $E_{w,0} = E_{w,0}P_1.$ Also, $P_1= P_1(E_{w,0} + E_{w,1}) = P_1E_{w,0}= E_{w,0}.$

Thus, whenever $(v,w) \in E(G),$ then $E_{v,i} = E_{w,i+1},$ i.e., there are two projections and they flip. Since $G$ is connected, by using a path from $v$ to an arbitrary $w$ we see that $\{ E_{w,0}, E_{w,1} \} = \{ P_0, P_1 \}.$

Now we wish to 2-color $G$. Define the color of any vertex $w$ to be $0$ if $E_{w,0} = P_0$ and 1 if $E_{w,0} = P_1.$ This yields a 2-coloring, and since $G$ is connected on more than one vertex, there is no 1-coloring, showing $\chi(G)=2$.

Conversely, if $\chi(G)=2$ then $G$ is not the empty graph. Since $1 \le \chi_{alg}(G) \le 2$, by the previous result, $\chi_{alg}(G)=2$.
\end{proof}

\begin{prop}
\label{prop:G-K-3-commute}
If $(v,w) \in E(G)$ then $E_{v,i}E_{w,j} = E_{w,j}E_{v,i} \in \cl A(G, K_3)$ for all $i,j$. In particular, if $G$ is complete, then $\cl A(G, K_3)$ is abelian.
\end{prop}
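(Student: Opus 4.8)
The plan is to work directly with the idempotent generators and exploit the fact that three colors is just small enough to force everything. Write $P_i := E_{v,i}$ and $Q_j := E_{w,j}$ for $i,j \in \{0,1,2\}$. Recall from \eqref{eq:efromu} that in $\bb C[\bb F(n,m)]$ the $e_{v,a}$ are mutually orthogonal self-adjoint idempotents with $\sum_a e_{v,a}=1$; hence in the quotient $\cl A(G,K_3)$ we have $P_i=P_i^*=P_i^2$, $P_iP_{i'}=0$ for $i\neq i'$, $P_0+P_1+P_2=I$, and likewise for the $Q_j$. For an edge $(v,w)\in E(G)$ the only new relations contributed to $\cl I(G,K_3)$ are $e_{v,i}e_{w,i}=0$ (the relations $e_{v,a}e_{v,b}=0$ for $a\neq b$ being already automatic), so in $\cl A(G,K_3)$ we have $P_iQ_i=0$ and, taking adjoints, $Q_iP_i=0$, for $i=0,1,2$.

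The heart of the argument is a single-surviving-term trick. First I would fix distinct colors $a\neq b$, let $c$ be the third color, and multiply $Q_0+Q_1+Q_2=I$ on the left by $P_a$ and on the right by $P_b$; the right side is $P_aP_b=0$. Among the three summands $P_aQ_aP_b,\ P_aQ_bP_b,\ P_aQ_cP_b$, the first vanishes because $P_aQ_a=0$ and the second because $Q_bP_b=0$, leaving $P_aQ_cP_b=0$. This is exactly where the hypothesis of three colors is used: with $c\ge 4$ colors there would be two or more surviving terms and nothing could be concluded.

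Next I would package this into a block-diagonalization. Using $\sum_a P_a=I$, expand $Q_c=\sum_{a,b}P_aQ_cP_b$; the terms with $a=c$ die since $P_cQ_c=0$, the terms with $b=c$ die since $Q_cP_c=0$, and the two remaining off-diagonal terms $P_aQ_cP_b$ with $\{a,b\}$ the colors different from $c$ die by the previous step. Thus $Q_c=P_aQ_cP_a+P_bQ_cP_b$, and from this $P_iQ_c=Q_cP_i$ for every $i$ (directly for $i\in\{a,b\}$, since then $P_iQ_c=P_iQ_cP_i=Q_cP_i$, and trivially for $i=c$, since both sides are $0$). Running this for $c=0,1,2$ yields $E_{v,i}E_{w,j}=E_{w,j}E_{v,i}$ for all $i,j$, which is the proposition. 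For the final clause, if $G$ is complete then every pair of distinct vertices is an edge, so $E_{v,i}$ and $E_{w,j}$ commute for all $v\neq w$ and all $i,j$, while $E_{v,i}E_{v,j}=\delta_{ij}E_{v,i}=E_{v,j}E_{v,i}$; since the $E_{v,i}$ generate $\cl A(G,K_3)$, it is commutative.

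I do not expect a serious obstacle here; the one thing to resist is the tempting shortcut of invoking positivity — e.g. arguing that $I-P_i-Q_i$ are projections summing to $I$ and therefore orthogonal — since that step requires the positive cone to be proper, which $\cl A(G,K_3)$ (as opposed to $\cl A^h$) need not satisfy. The Peirce-decomposition bookkeeping above is purely algebraic and sidesteps this, so the only real content is checking that exactly two of the three cross terms vanish, which is the computation sketched in the second paragraph.
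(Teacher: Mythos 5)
Your proof is correct and is essentially the paper's own argument: the paper likewise derives $E_{v,i}E_{w,j}E_{v,k}=0$ for $\{i,j,k\}=\{0,1,2\}$ by sandwiching $\sum_j E_{w,j}=I$ between $E_{v,i}$ and $E_{v,k}$, then performs the same Peirce-type expansion $E_{w,j}=E_{v,j+1}E_{w,j}E_{v,j+1}+E_{v,j+2}E_{w,j}E_{v,j+2}$ to read off commutativity. The only addition on your end is the (apt) remark that one should not try to shortcut via positivity, which the paper does not discuss but which is indeed a pitfall to avoid since $\cl A(G,K_3)$ need not have a proper cone.
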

\begin{proof} For $0= E_{v,0}E_{v,1} = E_{v,0}(E_{w,0} + E_{w,1} + E_{w,2}) E_{v,1} = E_{v,0}E_{w,2}E_{v,1}.$  Similarly,  $E_{v,i}E_{w,j}E_{v,k}=0$ whenever $\{i,j,k\} = \{ 0,1,2 \}.$

Now  $E_{w,0} = (E_{v,0} +E_{v,1} + E_{v,2})E_{w,0}(E_{v,0}+E_{v,1} +E_{v,2}) = E_{v,1}E_{w,0}E_{v,1} + E_{v,2}E_{w,0}E_{v,2}.$  Similarly,  $E_{w,j} = E_{v,j+1}E_{w,j}E_{v,j+1} + E_{v,j+2}E_{w,j}E_{v,j+2}.$

Hence, for $i \ne j$,  $E_{v,i}E_{w,j} = E_{v,i}E_{w,j}E_{v,i} = E_{w,j} E_{v,i}$, while when $i=j$,  $E_{v,i}E_{w,i} = 0= E_{w,i}E_{v,i}.$
\end{proof}

\begin{thm}  $\chi_{alg}(K_j) =j$ for $j=2,3,4.$
\end{thm}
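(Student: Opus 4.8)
The plan is to prove $\chi_{alg}(K_j)=j$ for $j=2,3,4$ by combining three types of ingredients: the easy cases already handled by earlier results, an upper bound coming from an explicit classical coloring, and a lower bound coming from a nontriviality (representation) obstruction. The case $j=2$ is immediate from Proposition~\ref{prop:alg=2} since $K_2$ is connected with more than one vertex and $\chi(K_2)=2$. For $j=3$ and $j=4$ the upper bound is trivial: $\chi_{alg}(K_j)\le\chi_{qc}(K_j)\le\chi(K_j)=j$ by the chain $\chi_{qc}\ge\chi_{alg}$ established just after the main theorem of Section~\ref{sec:alg-sync-game} (indeed a classical $j$-coloring of $K_j$ yields a $*$-homomorphism $\cl A(K_j,K_j)\to\bb C$). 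So the entire content is the lower bounds $\chi_{alg}(K_3)\ge 3$ and $\chi_{alg}(K_4)\ge 4$, i.e.\ showing $\cl A(K_3,K_2)$ and $\cl A(K_4,K_3)$ are trivial.

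For $\chi_{alg}(K_3)\ge 3$: I would show $\cl A(K_3,K_2)=(0)$. Label the three vertices $0,1,2$. Since $K_3\supseteq K_2$ is connected on more than one vertex, the argument inside the proof of Proposition~\ref{prop:alg=2} applies verbatim: across any edge the two projections $E_{v,0},E_{v,1}$ flip, so $E_{v,i}=E_{w,i+1}$ whenever $(v,w)\in E(K_3)$. But in $K_3$ all three vertices are pairwise adjacent, so $E_{0,0}=E_{1,1}=E_{2,0}=E_{0,1}$, forcing $E_{0,0}=E_{0,1}$; together with $E_{0,0}+E_{0,1}=I$ this gives $2E_{0,0}=I$, hence $E_{0,0}=E_{0,1}$ are projections equal to $\tfrac12 I$, and then $0=E_{0,0}E_{1,0}$ while $E_{1,0}=E_{0,1}=\tfrac12 I$ forces $\tfrac14 I=0$, i.e.\ $1\in\cl I(K_3,K_2)$. (Equivalently, one can just observe the odd cycle $K_3$ admits no consistent flip, so the relations collapse the algebra.) Thus $\cl A(K_3,K_2)=(0)$ and $\chi_{alg}(K_3)\ge 3$.

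For $\chi_{alg}(K_4)\ge 4$: I would show $\cl A(K_4,K_3)=(0)$. By Proposition~\ref{prop:G-K-3-commute}, since $K_4$ is complete, $\cl A(K_4,K_3)$ is abelian; hence every element is a simultaneous polynomial in the commuting projections $E_{v,i}$ ($v\in\{0,1,2,3\}$, $i\in\{0,1,2\}$), and to show the algebra is trivial it suffices to show it has no unital $*$-homomorphism to $\bb C$, i.e.\ no scalar assignment $E_{v,i}\mapsto\epsilon_{v,i}\in\{0,1\}$ with $\sum_i\epsilon_{v,i}=1$ for each $v$ and $\epsilon_{v,i}\epsilon_{w,i}=0$ for $i\in\{0,1,2\}$ and every edge $(v,w)$ — but such an assignment is exactly a proper $3$-coloring of $K_4$, which does not exist. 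This shows there is no $\bb C$-point; but for $\chi_{alg}$ I must show the abelian algebra is actually $(0)$, not merely pointless. Since $\cl A(K_4,K_3)$ is a finitely generated commutative $\bb C$-algebra, it is $(0)$ iff its character space is empty iff $1$ lies in the ideal generated by the relations in the commutative polynomial ring $\bb C[\,E_{v,i}\,]$ modulo $E_{v,i}^2=E_{v,i}$, $\sum_i E_{v,i}=1$, $E_{v,i}E_{w,i}=0$; by the commutative Nullstellensatz, emptiness of the (finite, in fact) variety of $0/1$-solutions — which we just checked is empty — gives $1$ in the radical, and since the quotient ring is reduced (generated by idempotents that commute, it is a finite product of copies of $\bb C$, hence reduced) we get $1$ in the ideal itself, so $\cl A(K_4,K_3)=(0)$. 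Hence $\chi_{alg}(K_4)\ge 4$, and with the upper bound, $\chi_{alg}(K_4)=4$.

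The only genuinely delicate point is the last one: passing from ``no $\bb C$-points'' to ``the algebra is $(0)$'' for $\cl A(K_4,K_3)$. The commutativity from Proposition~\ref{prop:G-K-3-commute} is what makes this work — without it one would have to rule out all finite-dimensional and infinite-dimensional representations directly. I expect the cleanest writeup is to note that in the abelian case the relations $E_{v,i}^2=E_{v,i}$ and $\sum_i E_{v,i}=1$ already force $\cl A(K_4,K_3)$ to be a (commutative) quotient of a finite product of fields, so it equals the function algebra on its finite set of characters; since that set is empty, the algebra is zero. Everything else is a short, mechanical consequence of the flip relation of Proposition~\ref{prop:alg=2} (for $j=3$) and the commutation relations of Proposition~\ref{prop:G-K-3-commute} (for $j=4$), plus the already-established inequality $\chi_{alg}\le\chi$.
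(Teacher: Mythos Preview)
Your proof is correct and follows essentially the same route as the paper's: $j=2$ via Proposition~\ref{prop:alg=2}; $j=3$ by the flip argument (the paper simply cites Proposition~\ref{prop:alg=2} once more rather than re-deriving the contradiction on $K_3$); and $j=4$ by using Proposition~\ref{prop:G-K-3-commute} to make $\cl A(K_4,K_3)$ abelian and then arguing that if it were nonzero it would admit a $\bb C$-point, i.e.\ a $3$-coloring of $K_4$. The one cosmetic difference is in how that last step is justified: where you appeal to the Nullstellensatz together with reducedness (quotient of $\bb C^{2^N}$), the paper gives the bare-hands version---pick a maximal ideal, note that the quotient field contains $\bb C$ and is generated by the images of idempotents (hence by $0$ and $1$), so the field \emph{is} $\bb C$---which avoids invoking the Nullstellensatz altogether.
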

\begin{proof}  We have that $\chi_{alg}(K_2) =2,$ by Proposition~\ref{prop:alg=2}.  Now if $\chi_{alg}(K_3) =2$ then by Proposition \ref{prop:alg=2}, we see $\chi(K_3) = 2,$ which is a contradiction.  Hence,  $3 \le \chi_{alg}(K_3) \le \chi(K_3) =3.$

Finally, if $\chi_{alg}(K_4) =3$, then by Proposition \ref{prop:G-K-3-commute}, we have that $\cl A(K_4,K_3)$ is a non-zero abelian complex *-algebra.
  But every unital, abelian ring contains a proper maximal ideal $M$, and forming the quotient we obtain a field $\bb F$. The map $\lambda 1 \to \lambda 1 +M$ embeds $\bb C$ as a subfield. Now we use the fact that $\cl A(K_4, K_3)$ is generated by projections and that the image of each projection in $\bb F$ is either 0 or 1 in order to see that the range of the quotient map is just $\bb C$. Thus, $\bb F= \bb C$ and we have a unital homomorphism $\pi: \cl A(K_4,K_3) \to \bb C$ and again using the fact that the image of each projection is 0 or 1 and that the projections commute, we see that $\pi$ is a *-homomorphism.  Hence, by \cite[Theorem~4.12]{OP2}, we have  $\chi(K_4) \le 3$, a contradiction.

Thus, $3< \chi_{alg}(K_4) \le \chi(K_4)=4$ and the result follows.
\end{proof}

\begin{cor} $I \in \cl I(K_4,K_3)$. \end{cor}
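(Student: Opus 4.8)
The plan is to recognize that this corollary is simply a reformulation of the theorem just proved. The first step is to recall the definitions: by construction $\cl A(K_4,K_3)=\bb C[\bb F(4,3)]/\cl I(K_4,K_3)$, and for any unital ring $R$ with two-sided ideal $\cl J$, the quotient $R/\cl J$ is the zero ring precisely when $1\in\cl J$. Since the unit of $\bb C[\bb F(4,3)]$ is $I$, the assertion $I\in\cl I(K_4,K_3)$ is therefore equivalent to the statement that $\cl A(K_4,K_3)=(0)$, i.e.\ that $\cl A(K_4,K_3)$ fails to be nontrivial.

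So the second, and only substantive, step is to invoke the preceding theorem: since $\chi_{alg}(K_4)=4>3$, the algebra $\cl A(K_4,K_3)$ is not nontrivial, hence is the zero algebra. (Concretely, this is exactly what the proof of $\chi_{alg}(K_j)=j$ establishes for $j=4$: a nonzero $\cl A(K_4,K_3)$ would be a nonzero abelian $*$-algebra by Proposition~\ref{prop:G-K-3-commute}, hence would admit a unital $*$-homomorphism onto $\bb C$, which by \cite[Theorem~4.12]{OP2} would produce a $3$-coloring of $K_4$, a contradiction.) Therefore $\cl A(K_4,K_3)=(0)$, and hence $I\in\cl I(K_4,K_3)$.

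I expect no genuine obstacle here; the real content was already supplied by the theorem, and the corollary only records its ideal-theoretic meaning. If one wanted more than the bare existence statement, the natural follow-up would be to exhibit an \emph{explicit} certificate, namely to write $I$ as a finite sum $\sum_k r_k\, e_{v_k,a_k}e_{w_k,b_k}\, s_k$ with $r_k,s_k\in\bb C[\bb F(4,3)]$ and each tuple $(v_k,w_k,a_k,b_k)$ satisfying $\lambda(v_k,w_k,a_k,b_k)=0$; this can be extracted by making the proofs of Proposition~\ref{prop:G-K-3-commute} and of the theorem constructive, but it is notationally heavy and not needed for the statement as given.
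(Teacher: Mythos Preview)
Your proposal is correct and matches the paper's approach exactly: the paper states the corollary without proof because it follows immediately from $\chi_{alg}(K_4)=4$ and the definition $\chi_{alg}(G)=\min\{c\mid \cl A(G,K_c)\text{ is nontrivial}\}$, which forces $\cl A(K_4,K_3)=(0)$ and hence $I\in\cl I(K_4,K_3)$. Your parenthetical recap of the theorem's argument is accurate but unnecessary for the corollary itself.
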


\section{*-Algebra versus Free Algebra}

\label{sec:alg}

The original motivation for the construction of the algebra of a game comes
from projective quantum measurement systems which are always given
by orthogonal projections on a Hilbert space, i.e., operators satisfying $E=E^2=E^*$.
This is why we have defined the algebra of a game to be a *-algebra.
But a natural question is whether or not one really needs a *-algebra
or is there simply a free algebra with relations that suffices?
In this section we show that as long as one introduces the correct relations
then the assumption that the algebra be a *-algebra is not necessary.

To this end let $\cl F(nm) :=\bb C \< x_{v,a}\mid 0 \le v \le n-1,  0 \le a \le m-1 \>$ be the free unital complex algebra on $nm$ generators and let  $\cl B(n,m)= \cl F(nm)/\cl I(n,m)$
 where $\cl I(n,m)$ is the two-sided ideal generated by
\[ x_{v,a}^2 - x_{v,a}, \forall v,a ; \quad 1 - \sum_{a=0}^{m-1} x_{v,a}, \forall v;
\quad  x_{v,a}x_{v,b}, \forall v, \forall a \ne b.\]
We let $p_{v,a}$ denote the coset of $x_{v,a}$ in the quotient so that
\[ p_{v,a}^2 = p_{v,a}, \forall v,a;
\quad  1 = \sum_{a=0}^{m-1} p_{v,a}, \forall v; \quad  p_{v,a}p_{v,b}=0, \forall v, \forall a \ne b.\]

\begin{prop}
There is an isomorphism $\pi: \cl B(n,m) \to \bb C[\bb F(n,m)]$ with $\pi(p_{v,a}) = e_{v,a}, \forall v, a$, where $e_{v,a}$ are defined as in the previous section.
\end{prop}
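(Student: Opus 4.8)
The plan is to exhibit an explicit two-sided inverse of $\pi$, using the universal property of the free algebra $\cl F(nm)$ on one side and the universal property of the group algebra of a free product on the other.

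First I would check that $\pi$ is well defined and surjective. As recalled in the previous section, the $e_{v,a}$ are orthogonal idempotents with $\sum_{a=0}^{m-1} e_{v,a}=1$, i.e.\ $e_{v,a}^2=e_{v,a}$, $e_{v,a}e_{v,b}=0$ for $a\ne b$, and $\sum_a e_{v,a}=1$; these are exactly the defining relations of $\cl B(n,m)$. Hence the assignment $x_{v,a}\mapsto e_{v,a}$ extends to a unital algebra homomorphism $\cl F(nm)\to\bb C[\bb F(n,m)]$ that vanishes on $\cl I(n,m)$ and so descends to a unital homomorphism $\pi\colon\cl B(n,m)\to\bb C[\bb F(n,m)]$ with $\pi(p_{v,a})=e_{v,a}$. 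Surjectivity is immediate from $u_v=\sum_{a=0}^{m-1}\omega^a e_{v,a}$, since the $u_v$ generate $\bb C[\bb F(n,m)]$.

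Next I would construct the candidate inverse. In $\cl B(n,m)$ set $y_v:=\sum_{a=0}^{m-1}\omega^a p_{v,a}$. Since the $p_{v,a}$ are orthogonal idempotents summing to $1$, one computes $y_v^k=\sum_{a=0}^{m-1}\omega^{ak}p_{v,a}$ for every $k\ge 0$, so $y_v^m=\sum_{a=0}^{m-1}p_{v,a}=1$ and $y_v$ is a unit with inverse $y_v^{m-1}$. Thus each $y_v$ is a unit of $\cl B(n,m)$ whose $m$-th power is $1$, hence defines a group homomorphism $\ZZ/m\ZZ\to\cl B(n,m)^\times$; by the universal property of the free product $\bb F(n,m)$ these assemble into a group homomorphism $\bb F(n,m)\to\cl B(n,m)^\times$, and therefore into a unital algebra homomorphism $\psi\colon\bb C[\bb F(n,m)]\to\cl B(n,m)$ with $\psi(u_v)=y_v$.

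Finally I would verify that $\pi\circ\psi$ and $\psi\circ\pi$ are the identity, for which it suffices to check this on generators. For $\pi\circ\psi$: $\pi(\psi(u_v))=\pi\big(\sum_{a=0}^{m-1}\omega^a p_{v,a}\big)=\sum_{a=0}^{m-1}\omega^a e_{v,a}=u_v$. For $\psi\circ\pi$: using the formula \eqref{eq:efromu}, $\psi(\pi(p_{v,a}))=\psi(e_{v,a})=\tfrac1m\sum_{k=0}^{m-1}(\omega^{-a}y_v)^k=\tfrac1m\sum_{k=0}^{m-1}\sum_{b=0}^{m-1}\omega^{(b-a)k}p_{v,b}=\sum_{b=0}^{m-1}\big(\tfrac1m\sum_{k=0}^{m-1}\omega^{(b-a)k}\big)p_{v,b}=p_{v,a}$, where the last step uses $\tfrac1m\sum_{k=0}^{m-1}\omega^{(b-a)k}=\delta_{a,b}$. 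I expect no genuine obstacle here: the only points requiring care are the correct invocation of the universal property of $\bb C[\bb F(n,m)]$ (a unital homomorphism out of it is the same as a choice, for each $v$, of a unit of the target whose $m$-th power is $1$) and the verification that $y_v^m=1$, which is precisely where the orthogonality of the $p_{v,a}$ enters.
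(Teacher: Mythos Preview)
Your proof is correct and follows essentially the same route as the paper: define $\pi$ via the universal property of $\cl F(nm)$, set $y_v=\sum_a\omega^a p_{v,a}$ in $\cl B(n,m)$, use $y_v^m=1$ to invoke the universal property of $\bb C[\bb F(n,m)]$ and obtain an inverse map sending $u_v\mapsto y_v$. The only difference is that you explicitly verify both compositions on generators, whereas the paper simply asserts the resulting $\gamma$ is the inverse of $\pi$; your version is slightly more complete in this respect.
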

\begin{proof}
Let $\rho: \cl F(nm) \to \bb C[\bb F(n,m)]$ be
the unital algebra homomorphism 
with $\rho(x_{v,a}) = e_{v,a}$
Then $\rho$ vanishes on $\cl I(n,m)$ and so induces a quotient homomorphism $\pi: \cl B(n,m) \to \bb C[\bb F(n,m)]$.  It remains to show that $\pi$ is one-to-one.

To this end set $\omega= e^{2 \pi i/m}$ and let $y_v = \sum_{a=0}^{m-1} \omega^a p_{v,a}$. It is readily checked that $y_{v}$  $y_v^{m} = \sum_{a=0}^{m-1} \omega^{-am} p_{v,a}=1$.  Since $p_{v,a} = \frac{1}{m} \sum_{k=0}^{m-1} (\omega^{-a} y_v)^k$ we have that $\{ y_v: 0 \le v \le n-1 \}$ generates $\cl B(n,m)$.

Now by the universal property of $\bb C[\bb F(n,m)]$ there is a homomorphism $\gamma: \bb C[\bb F(n,m)] \to \cl B(n,m)$ with $\gamma(u_v) = y_v$ and hence, this is the inverse of $\pi$.
\end{proof}

If a unital algebra contains 3 idempotents, $p_1, p_2, p_3$ with $p_1+p_2+p_3 =1$, then necessarily $p_ip_j =0$ for $i \ne j$. To see this note that $p_1+p_2 = 1 - p_3$ is idempotent. Squaring yields that $p_1p_2+p_2p_1=0$. Thus, $0= p_1(p_1p_2+p_2p_1) = p_1p_2 + p_1p_2p_1$ and $0 = (p_1p_2+p_2p_1)p_1 = p_1p_2p_1 + p_2p_1$, from which it follows that $p_1p_2=p_2p_1$ and so $2p_1p_2 =0$ and the claim follows.

Hence, when $m=3$ the condition that $x_{v,a}x_{v,b} =0, \forall a \ne b$ is a consequence of the other hypotheses and is not needed in the definition of the ideal $\cl I(n,m)$.

However, Heydar Radjavi \cite{Ra} has constructed a set of 4 idempotents in a unital complex algebra which sum to the identity but for which $p_ip_j \ne 0$ for $i \ne j$.
Thus, for $m \ge 4$, it is necessary to include the relation $x_{v,a}x_{v,b}$ in the ideal in order to guarantee that the quotient $\cl B(n,m)$ is isomorphic to $\bb C[\bb F(n,m)]$, since these products are 0 in the latter algebra.

If a set of self-adjoint projections on a Hilbert space, $P_1,...,P_m$ sum to the identity then it is easily checked that they project onto orthogonal subspaces and so $P_iP_j =0, \forall i \ne j$.
Thus, in any C*-algebra when self-adjoint idempotents sum to the identity,
their pairwise products are 0.  But the situation is not so clear for
self-adjoint idempotents in a *-algebra and we have not been able to resolve this question.
So we ask:

\begin{prob} Let $\cl A$ be a unital *-algebra and let $p_1,...,p_m$ satisfy $p_i=p_i^2=p_i^*$ and $p_1+ \cdots + p_m =1$. Then does it follow that $p_ip_j=0, \forall i \ne j$?
\end{prob}

\begin{cor} Let $\cl G=(I,O, \lambda)$ be a symmetric synchronous game with $|I|=n$ and $|O|=m$.  Then $\cl A(\cl G)$ is isomorphic to the quotient of $\cl F(nm)$
by the 2-sided ideal generated by
\[ x_{v,a}^2- x_{v,a}, \forall v,a;  \quad 1 - \sum_{a=0}^{m-1} x _{v,a}, \forall v \]
and
\[ x_{v,a} x_{w,b}, \forall v,w,a,b \text{ such that } \lambda(v,w,a,b) =0.\]
\end{cor}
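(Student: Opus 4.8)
The plan is to bootstrap off the preceding proposition, which identifies $\cl B(n,m)$ with $\bb C[\bb F(n,m)]$ via $p_{v,a}\mapsto e_{v,a}$, and to exploit the fact that \emph{symmetry} of $\cl G$ is exactly what forces the chosen generating set of $\cl I(\cl G)$ to be closed under the adjoint, so that the two-sided $*$-ideal it generates is already an ordinary two-sided ideal.

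First I would recall that $\cl A(\cl G)=\bb C[\bb F(n,m)]/\cl I(\cl G)$, where $\cl I(\cl G)$ is by definition the two-sided $*$-ideal generated by $S:=\{\,e_{v,a}e_{w,b}\mid \lambda(v,w,a,b)=0\,\}$, and that the preceding proposition gives an algebra isomorphism $\pi\colon \cl B(n,m)\to\bb C[\bb F(n,m)]$ with $\pi(p_{v,a})=e_{v,a}$. Since $\pi$ is an isomorphism, it carries the two-sided ideal of $\cl B(n,m)$ generated by $\pi^{-1}(S)=\{\,p_{v,a}p_{w,b}\mid\lambda(v,w,a,b)=0\,\}$ onto the two-sided ideal of $\bb C[\bb F(n,m)]$ generated by $S$, so it suffices to show that \emph{this} two-sided ideal equals the two-sided $*$-ideal $\cl I(\cl G)$.

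The key step is the claim $S^*=S$. Each $e_{v,a}$ is a self-adjoint idempotent, so $(e_{v,a}e_{w,b})^*=e_{w,b}e_{v,a}$; and because $\cl G$ is symmetric, $\lambda(v,w,a,b)=0\iff\lambda(w,v,b,a)=0$, so $e_{w,b}e_{v,a}\in S$ whenever $e_{v,a}e_{w,b}\in S$. Hence the two-sided ideal $J$ generated by $S$ satisfies $J^*=\langle S^*\rangle=\langle S\rangle=J$, i.e.\ $J$ is itself a $*$-ideal containing $S$; since $\cl I(\cl G)$ is the smallest such, $J=\cl I(\cl G)$. Pulling back along $\pi$ and using $\cl B(n,m)=\cl F(nm)/\cl I(n,m)$, we conclude that $\cl A(\cl G)$ is isomorphic to the quotient of $\cl F(nm)$ by the two-sided ideal $J'$ generated by the generators of $\cl I(n,m)$ together with $\{\,x_{v,a}x_{w,b}\mid\lambda(v,w,a,b)=0\,\}$.

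Finally I would simplify the generating set of $J'$: since $\cl G$ is synchronous we have $\lambda(v,v,a,b)=0$ for all $a\neq b$, so the relations $x_{v,a}x_{v,b}$ ($a\neq b$) occurring among the generators of $\cl I(n,m)$ are the special case $w=v$ of the relations $x_{v,a}x_{w,b}$ with $\lambda(v,w,a,b)=0$, and may be dropped from the list; what remains is precisely $x_{v,a}^2-x_{v,a}$, $1-\sum_{a}x_{v,a}$, and $x_{v,a}x_{w,b}$ for $\lambda(v,w,a,b)=0$, as stated. The only genuinely delicate point is the identity $S^*=S$: it is exactly the hypothesis that the game is symmetric that permits replacing the $*$-ideal by an ordinary two-sided ideal, and without symmetry one would be forced to adjoin the extra products $e_{w,b}e_{v,a}$ (for $\lambda(v,w,a,b)=0$), which need not appear among the listed generators.
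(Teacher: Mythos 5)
Your proposal is correct and follows the route the paper intends: the paper states this as an immediate consequence of the preceding proposition and then supplies, in the remark immediately after the corollary, exactly the two observations you spell out — that $x_{v,a}x_{v,b}$ ($a\neq b$) is redundant because $\lambda(v,v,a,b)=0$, and that symmetry is what permits replacing the $*$-ideal by an ordinary two-sided ideal. Your argument that $S^*=S$ (using self-adjointness of the $e_{v,a}$ together with $\lambda(v,w,a,b)=0\iff\lambda(w,v,b,a)=0$), hence that the two-sided ideal generated by $S$ is already the $*$-ideal $\cl I(\cl G)$, is precisely the missing step the paper leaves implicit, and pulling back along the isomorphism $\pi$ to the presentation of $\cl B(n,m)$ gives the stated generators.
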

Note that $x_{v,a}x_{v,b}$ for $a \ne b$ is in the ideal since $\lambda(v,v,a,b)=0$.
We note that the hypothesis that the game be symmetric is needed, since in a *-algebra, the condition that $x_{v,a}x_{w,b} =0$ implies that $x_{w,b}x_{v,a}=0$, while this relation would not necessarily be met in the quotient of the free algebra.

\begin{cor}
\label{cor:1notin-I-Fnm}
A symmetric synchronous game $\cl G= (I,O, \lambda)$
 has a perfect algebraic strategy if and only if 1 is not in the 2-sided ideal of $\cl F(nm)$ generated by
\[ x_{v,a}^2- x_{v,a}, \forall v,a; \quad  1 - \sum_{a=0}^{m-1} x _{v,a}, \forall v,\]
and
\[ x_{v,a} x_{w,b}, \forall v,w,a,b \text{ such that } \lambda(v,w,a,b) =0.\]
\end{cor}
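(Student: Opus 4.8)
The plan is to derive this immediately from the preceding corollary together with the basic correspondence between nontriviality of $\cl A(\cl G)$ and membership of $1$ in the defining ideal. Recall that by definition $\cl G$ has a perfect algebraic strategy precisely when $\cl A(\cl G)$ is nontrivial, i.e. nonzero. For any unital ring $R$ and any two-sided ideal $J$, the quotient $R/J$ is the zero ring if and only if $1 \in J$. Applying this with $R = \cl F(nm)$ and $J$ the two-sided ideal $\cl I$ generated by the listed relations, we get $\cl F(nm)/\cl I = (0)$ iff $1 \in \cl I$.

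The one thing that needs invoking is the identification of this quotient with $\cl A(\cl G)$. That is exactly the content of the previous corollary, which states that for a symmetric synchronous game $\cl G = (I,O,\lambda)$ with $|I|=n$, $|O|=m$, the algebra $\cl A(\cl G)$ is isomorphic to $\cl F(nm)$ modulo the two-sided ideal generated by the idempotency relations $x_{v,a}^2 - x_{v,a}$, the partition-of-unity relations $1 - \sum_{a} x_{v,a}$, and the orthogonality relations $x_{v,a}x_{w,b}$ for $\lambda(v,w,a,b)=0$. Since the two presentations of the ideal coincide verbatim, the isomorphism is immediate and carries $1$ to $1$; in particular $\cl A(\cl G)$ is nontrivial iff $1$ is not in that ideal of $\cl F(nm)$.

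So the steps, in order, are: (1) unwind the definition of ``perfect algebraic strategy'' to ``$\cl A(\cl G) \neq (0)$''; (2) quote the preceding corollary to rewrite $\cl A(\cl G)$ as the stated quotient of the free algebra $\cl F(nm)$; (3) observe that a quotient of a unital ring by a two-sided ideal is nonzero iff the ideal is proper iff it does not contain $1$. There is no real obstacle here — the content was already done in establishing the previous corollary (which in turn rested on the isomorphism $\cl B(n,m) \cong \bb C[\bb F(n,m)]$ and the reduction to symmetric games in Proposition~\ref{prop:reduce-to-synch-games}). The only point worth a word of care is that the hypothesis of symmetry is genuinely used, exactly as flagged after the previous corollary: in a free (non-$*$) algebra, imposing $x_{v,a}x_{w,b}=0$ does not force $x_{w,b}x_{v,a}=0$, so without symmetry the free-algebra quotient would differ from $\cl A(\cl G)$, and the equivalence could fail. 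I would simply note that and be done.
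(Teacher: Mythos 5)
Your proof is correct and matches what the paper leaves implicit: the corollary follows directly from the preceding corollary's identification of $\cl A(\cl G)$ with the stated quotient of $\cl F(nm)$, combined with the elementary fact that a unital quotient $R/J$ is nonzero iff $1\notin J$. Your remark on why symmetry is needed is also the right point of care and mirrors the paper's own comment after the previous corollary.
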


\subsection{Change of Field}
\label{sec:field}
The following is important for  Gr\"obner basis calculations.  Let  $1 \in \bb K \subseteq \bb C$ be any subfield.  We set $\cl F_{\bb K}(nm)$ equal to the free $\bb K$-algebra on $nm$ generators $x_{v,i}$, so that $\cl F_{\bb C}(nm) = \cl F(nm)$.  Given any symmetric synchronous game $\cl G$ with $n$ inputs and $m$ outputs, we let $\cl I_{\bb K}(\cl G) \subseteq \cl F_{\bb K}(nm)$ be the 2-sided ideal generated by
\[ x_{v,a}^2- x_{v,a}, \forall v,a;  \quad  1 - \sum_{a=0}^{m-1} x _{v,a}, \forall v, \]
and
\[ x_{v,a} x_{w,b}, \forall v,w,a,b \text{ such that } \lambda(v,w,a,b) =0.\]
We let $\cl A_{\bb K}(\cl G) = \cl F_{\bb K}(nm)/\cl I_{\bb K}(\cl G)$.
 By Corollary \ref{cor:1notin-I-Fnm}, $\cl G$ has a perfect algebraic strategy
 if and only if $1 \not \in \cl I_{\bb C}(\cl G)$, or equivalently $\cl A_{\bb C}(\cl G)\neq0$.

We show that this computation is independent of the field $\bb K$.
\begin{prop}
\label{prop:A_K-base-change}
If $\bb K$ is a field containing $\bb Q$, then
$$\cl A_{\bb K}(\cl G)=\cl A_{\bb Q}(\cl G)\otimes_{\bb Q}\bb K.$$
Furthermore, $\cl A_{\bb K}(\cl G)=0$ if and only if $\cl A_{\bb Q}(\cl G)=0$.
\end{prop}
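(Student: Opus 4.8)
The plan is to recognize the assignment $\bb K \mapsto \cl A_{\bb K}(\cl G)$ as a base change along the (free, hence flat) field extension $\bb Q \hookrightarrow \bb K$, and the only thing requiring genuine care — as opposed to pure formalism — will be matching up which two‑sided ideal lives in which free algebra.

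First I would record that free algebras commute with extension of scalars: the words in the $nm$ generators $x_{v,i}$ form simultaneously a $\bb Q$‑basis of $\cl F_{\bb Q}(nm)$ and a $\bb K$‑basis of $\cl F_{\bb K}(nm)$, so the canonical map $\cl F_{\bb Q}(nm)\otimes_{\bb Q}\bb K \to \cl F_{\bb K}(nm)$ sending $P\otimes\lambda \mapsto \lambda P$ is an isomorphism of $\bb K$‑algebras; I will use it to identify these two rings. Next, apply $-\otimes_{\bb Q}\bb K$ to the short exact sequence of $\bb Q$‑vector spaces
\[ 0 \to \cl I_{\bb Q}(\cl G) \to \cl F_{\bb Q}(nm) \to \cl A_{\bb Q}(\cl G) \to 0. \]
Since $\bb K$ is free, hence flat, over $\bb Q$, the sequence
\[ 0 \to \cl I_{\bb Q}(\cl G)\otimes_{\bb Q}\bb K \to \cl F_{\bb K}(nm) \to \cl A_{\bb Q}(\cl G)\otimes_{\bb Q}\bb K \to 0 \]
is again exact, and the right‑hand map is a $\bb K$‑algebra quotient map. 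Hence $\cl A_{\bb Q}(\cl G)\otimes_{\bb Q}\bb K \cong \cl F_{\bb K}(nm)/J$, where $J$ is the image of $\cl I_{\bb Q}(\cl G)\otimes_{\bb Q}\bb K$ in $\cl F_{\bb K}(nm)$, i.e. the $\bb K$‑linear span of $\cl I_{\bb Q}(\cl G)$.

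The crux is then to verify $J = \cl I_{\bb K}(\cl G)$. For ``$\subseteq$'': a typical element of $\cl I_{\bb Q}(\cl G)$ is a finite sum $\sum_j P_j g_j Q_j$ with $P_j,Q_j\in\cl F_{\bb Q}(nm)$ and each $g_j$ one of the listed generators $x_{v,a}^2-x_{v,a}$, $1-\sum_a x_{v,a}$, $x_{v,a}x_{w,b}$ (for $\lambda(v,w,a,b)=0$) — whose coefficients already lie in $\bb Z\subseteq\bb K$, so ``the same generators'' makes sense over every $\bb K$; multiplying by $\lambda\in\bb K$ keeps $\sum_j(\lambda P_j)g_jQ_j$ inside the two‑sided ideal $\cl I_{\bb K}(\cl G)$ of $\cl F_{\bb K}(nm)$ generated by that same list. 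For ``$\supseteq$'': an element of $\cl I_{\bb K}(\cl G)$ has the form $\sum_j P_j' g_j Q_j'$ with $P_j',Q_j'\in\cl F_{\bb K}(nm)$; expanding $P_j'$ and $Q_j'$ in the monomial basis over $\bb K$ exhibits it as a $\bb K$‑linear combination of terms $w\, g_j\, w'$ with $w,w'$ monomials, each of which lies in $\cl I_{\bb Q}(\cl G)$, hence the sum lies in $J$. Combining, $\cl A_{\bb K}(\cl G)=\cl F_{\bb K}(nm)/\cl I_{\bb K}(\cl G)=\cl F_{\bb K}(nm)/J\cong \cl A_{\bb Q}(\cl G)\otimes_{\bb Q}\bb K$ as $\bb K$‑algebras.

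Finally, for the ``furthermore'' clause I would use that $\bb Q\hookrightarrow\bb K$ splits as a map of $\bb Q$‑modules (any subspace of a vector space has a complement), so $\bb K\cong\bigoplus_{i\in B}\bb Q$ with $1\in B$ a basis, whence $\cl A_{\bb Q}(\cl G)\otimes_{\bb Q}\bb K\cong\bigoplus_{i\in B}\cl A_{\bb Q}(\cl G)$; this vanishes if and only if $\cl A_{\bb Q}(\cl G)=0$, since $\cl A_{\bb Q}(\cl G)$ is a direct summand of it. I expect the only place to slip is the bookkeeping in the $J=\cl I_{\bb K}(\cl G)$ step (keeping straight two‑sided versus $\bb K$‑span, and that the defining relations are integral); everything else is standard homological algebra over a field.
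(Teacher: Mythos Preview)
Your proof is correct and follows essentially the same route as the paper's: tensor the defining short exact sequence with $\bb K$, invoke flatness of $\bb K$ over $\bb Q$, and identify the image of $\cl I_{\bb Q}(\cl G)\otimes_{\bb Q}\bb K$ with $\cl I_{\bb K}(\cl G)$ (the paper compresses your careful two‑inclusion argument into ``one checks''). For the final clause the paper simply invokes faithful flatness of $\bb K/\bb Q$, which is your direct‑summand argument phrased abstractly.
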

\begin{proof}
By definition, we have a short exact sequence
\[
0 \to \cl I_{\bb Q} \to \cl F_{\bb Q} \to \cl A_{\bb Q} \to 0
\]
of $\bb Q$-vector spaces. Since $\bb K$ is flat over $\bb Q$, we obtain a short exact sequence
\[
0 \to \cl I_{\bb Q}\otimes_{\bb Q}\bb K \to \cl F_{\bb Q}\otimes_{\bb Q}\bb K \to \cl A_{\bb Q}\otimes_{\bb Q}\bb K \to 0.
\]
Since the generators are independent of the field, one checks that $\cl F_{\bb Q}\otimes_{\bb Q}\bb K=\cl F_{\bb K}$ and that the image of $\cl I_{\bb Q}\otimes_{\bb Q}\bb K \to \cl F_{\bb K}$ is equal to $\cl I_{\bb K}$. Since this latter map is injective, we see $\cl I_{\bb Q}\otimes_{\bb Q}\bb K=\cl I_{\bb K}$. Hence, the above short exact sequence shows $\cl A_{\bb K}(\cl G)=\cl A_{\bb Q}(\cl G)\otimes_{\bb Q}\bb K$.

Lastly, $\bb K/\bb Q$ is faithfully flat. So, $\cl A_{\bb K}(\cl G)=0$ if and only if $\cl A_{\bb Q}(\cl G)=0$.
\end{proof}

\section{The case of 4 colors}
\label{sec:4colors}

\def\vari{x}

 This section gives a machine-assisted proof which analyzes 4 algebraic colors. We prove:
 
\begin{thm}
	\label{thm:4colors}
	For any graph $G$, we have $\chi_{alg}(G)\leq 4$.
\end{thm}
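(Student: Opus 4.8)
## Proof Proposal

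The plan is to show that $1 \notin \cl I(G,K_4)$ for any graph $G$ by reducing to a finite, checkable computation and then verifying it with a computer algebra system (e.g. a noncommutative Gr\"obner basis engine such as NCAlgebra, or Bergman/Magma). First I would note the following monotonicity: by Corollary~\ref{cor:1notin-I-Fnm}, $\chi_{alg}(G)\le 4$ is equivalent to $1\notin \cl I_{\bb C}(G,K_4)$, and by Proposition~\ref{prop:A_K-base-change} this is in turn equivalent to $1\notin \cl I_{\bb Q}(G,K_4)$. So it suffices to work over $\bb Q$, which makes exact symbolic computation feasible. Next, the key structural reduction: a perfect algebraic $4$-coloring of $G$ can be built vertex by vertex from a perfect algebraic $4$-coloring of a single edge $K_2$, \emph{provided} one has enough ``room'' in the algebra to re-color each new vertex independently of the already-colored ones. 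Concretely, I would attempt to construct an explicit nonzero algebra $\cl A$ (a single fixed algebra, independent of $G$) together with, for every ordered pair of distinct elements of a set $\{\alpha,\beta,\gamma,\delta\}$ of ``colors'', a system of four projections summing to $1$, pairwise orthogonal, realizing a homomorphism of the $K_2$-algebra; one then colors $G$ greedily, assigning to each vertex a projection system that is ``compatible'' (orthogonal in the relevant products) with those of its already-processed neighbors.

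The cleanest way to package this is: fix the free algebra $\cl F_{\bb Q}(8)$ on generators $x_{0,a}, x_{1,a}$ for $a\in\{0,1,2,3\}$ modulo $\cl I_{\bb Q}(K_2,K_4)$ — call the quotient $\cl A(K_2,K_4)$ — and exhibit inside it (or inside a suitable further quotient that is still nonzero) elements that behave like "the $i$-th vertex's $a$-th color projection" for an arbitrarily long path, hence for any graph after a greedy ordering. Since $K_4$ is vertex-transitive and has the property that any $3$ colors leave a $4$th available, I expect that once two adjacent vertices are colored, every subsequent vertex adjacent to one already-colored vertex has a free choice; the crucial point is whether the algebra relations let these choices be made \emph{commuting with / orthogonal to} the constraints imposed by \emph{all} already-colored neighbors simultaneously, not just one. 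This is exactly where the machine computation enters: one sets up the ideal generated by the coloring relations for a small ``gadget'' graph (the worst local configuration that can arise — e.g. a new vertex adjacent to several mutually adjacent old vertices, i.e. a copy of $K_r$ with one extra vertex, for $r$ up to $3$ since $K_5$ is the first obstruction for $K_4$) and asks the Gr\"obner basis engine whether $1$ lies in it. If $1\notin \cl I_{\bb Q}$ for every such finite gadget, an induction on $|V(G)|$ using a greedy vertex order finishes the proof, the inductive step being: the algebra for $G$ surjects onto a nonzero algebra because we can freely adjoin the new vertex's projections as a direct-sum/amalgamated block over the gadget subcomputation.

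The main obstacle I anticipate is precisely the globalization step: verifying that the \emph{local} non-triviality (on gadgets) glues to \emph{global} non-triviality for arbitrary $G$. A single vertex may be adjacent to many previously-colored vertices that are \emph{not} mutually adjacent, so the finite gadget one must check is not just $K_r+\{\text{vertex}\}$ but could a priori be $K_{1,r}+\{\text{apex}\}$-type configurations of unbounded size — one needs an argument (or a computation) showing that only \emph{bounded} configurations matter, e.g. because the relevant relations only couple a new vertex to each old neighbor pairwise, so compatibility with many neighbors is the ``and'' of pairwise-compatibility conditions that can always be simultaneously satisfied in a large enough block matrix algebra. Establishing that this pairwise-to-simultaneous passage works is the heart of the matter; the Gr\"obner basis computation itself, once the right finite ideal is identified, is then just a matter of running the engine and reporting that $1$ does not reduce to a unit. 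I would therefore organize the written proof as: (1) reduce to $\bb Q$ and to $1\notin\cl I_{\bb Q}$; (2) identify the finite set of gadget ideals whose non-triviality suffices; (3) state the (machine-verified) claim that $1$ is not in each gadget ideal, with the Gr\"obner basis data recorded explicitly; (4) run the greedy induction to conclude $\chi_{alg}(G)\le 4$ for all $G$.
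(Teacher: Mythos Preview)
Your proposal has a genuine gap at exactly the point you flag as the main obstacle: the globalization step. The greedy vertex-by-vertex construction and the hope that ``only bounded configurations matter'' are not justified. A new vertex may be adjacent to an arbitrary, and arbitrarily large, set of already-processed vertices (not mutually adjacent), and there is no a priori reason the relations imposed by all these neighbours can be simultaneously satisfied in a nonzero quotient merely because each pairwise constraint can. Your suggestion that this might be arranged in a ``large enough block matrix algebra'' cannot work as stated: a finite-dimensional representation of $\cl A(K_n,K_4)$ would give a perfect $q$-strategy and hence $\chi_q(K_n)\le 4$, which is false for $n\ge 5$. So whatever witness algebra you build must be genuinely infinite-dimensional and non-$C^*$, and your outline gives no mechanism for producing one.

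The paper sidesteps this difficulty with two ideas you are missing. First, a monotonicity reduction: since any $G$ on $n$ vertices is a spanning subgraph of $K_n$, one has $\cl I(G,K_4)\subseteq\cl I(K_n,K_4)$, so it suffices to prove $1\notin\cl I(K_n,K_4)$ for all $n$ --- there is no need to handle general $G$ directly or to induct on vertices. Second, and this is the crux, the paper writes down an \emph{explicit, uniform-in-$n$} Gr\"obner basis for $\cl I(K_n,K_4)$: seven families of relations, each involving variables from at most three vertices. Because every basis element touches at most three vertices, any syzygy between two of them involves at most six; hence verifying that all syzygies reduce to zero can be done once and for all in the single case $n=6$, by machine. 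Since $1$ does not appear in this Gr\"obner basis, $1\notin\cl I(K_n,K_4)$ for every $n$. The finite check you were looking for is thus not a collection of ``gadget'' ideals glued by induction, but a single Gr\"obner computation at $n=6$ that certifies a basis valid for all $n$ simultaneously.
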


This theorem is equivalent to the statement that for any  graph $G$, the ideal satisfies $1\not\in\mathcal{I}(G,K_4)$. We will prove this statement through the use of (noncommutative) Gr\"obner bases. For a brief effective exposition to noncommutative Gr\"obner basis algorithms, see Chapter 12.3 \cite{F97} or \cite{Mor94, Rei95, Lev05}.

For those readers already familiar with (commutative) Gr\"obner bases, we explain the key differences with the noncommutative setting. Let $\cl I = (p_1, \dots p_k)$ be a two-sided ideal, and prescribe a monomial order. A noncommutative Gr\"obner basis $\cl B$ of $\cl I$ is a set of generators such that the leading term of any element of $\cl I$ is in the monomial ideal generated the leading terms of $\cl B$. A noncommutative Gr\"obner basis is produced in the same way as in the commutative case. Let $m_j$ be the leading term of $p_j$ and notice that any two $m_j, m_k$ have as many as 4 possible least common multiples, each of which produces syzygyies. One repeatedly produces syzygyies and reduces to obtain a Gr\"obner basis in the same way as the commutative setting. However, unlike the commutative case, a Gr\"obner basis can be infinite. Very fortunately the Gr\"obner bases that arise in our coloring computations below are finite. The key property we use is that $p$ is in $\cl I$ if and only if the reduction of $p$ by a Gr\"obner basis for $\cl I$ yields 0.

Recall that $\mathcal{I}(G,K_4)$ is generated by the following relations:
 $$\vari_{v,i}\vari_{v,j}\ \  \forall i\neq j;\ \ 1-\sum_{i=0}^{3}\vari_{v,i}\ \   \forall v; \ \ \vari_{v,i}\vari_{w,i} \ \ \forall (v,w)\in E(G), \, \forall i.$$

 To prove Theorem \ref{thm:4colors} we will make use of the following theorem:

\begin{thm}
\label{Grob:Bas}
For any $n\geq 3$ a Gr\"obner basis for $\mathcal{I}(K_n, K_4)$ under the graded lexographic ordering with
\beq
\label{eq:order}
\vari_{0,0}<\vari_{0,1}<\vari_{0,2}<\vari_{0,3}<\vari_{1,0}<\vari_{1,1}<\ldots < \vari_{n-1,3}
\eeq
consists of relations of the following forms:

\ben
	 \item
	 	\label{it:1}
	 $$\vari_{v,i}\vari_{v,j}$$ with $i,j\leq 2$  $i\neq j$

	\item
		\label{it:2}
	$$\vari_{v,i}^2-\vari_{v,i}$$ with $i\leq 2$

	\item
		\label{it:3}
	$$\vari_{v,3}+\vari_{v,2}+\vari_{v,1}+\vari_{v,0}-1$$

	\item
		\label{it:4}
	$$\vari_{v,i}\vari_{w,i}$$ with  $v\neq w$, $i\leq 3$

	\item
		\label{it:5}
	$$\vari_{v,2}\vari_{w,1}  + \vari_{v,2}\vari_{w,0}+\vari_{v,1}\vari_{w,2}+\vari_{v,1}\vari_{w,0}+\vari_{v,0}\vari_{w,2}+\vari_{v,0}\vari_{w,1}$$ $$-\vari_{v,2}-\vari_{v,1}-\vari_{v,0}-\vari_{w,2}-\vari_{w,1}-\vari_{w,0}+1$$  with $v\neq w$

	\item
		\label{it:6}
	$$\vari_{v,2}\vari_{w,0}\vari_{v,1}  - \vari_{v,1}\vari_{w,2}\vari_{v,0}-\vari_{v,1}\vari_{w,0}\vari_{v,2}-\vari_{v,0}\vari_{w,2}\vari_{v,0}-\vari_{v,0}\vari_{w,1}\vari_{v,2}-\vari_{v,0}\vari_{w,1}\vari_{v,0}$$ $$+\vari_{v,1}\vari_{w,2}+\vari_{v,1}\vari_{w,0}+\vari_{v,0}\vari_{w,2}+\vari_{v,0}\vari_{w,1}+\vari_{w,2}\vari_{v,0}+\vari_{w,1}\vari_{v,2}+\vari_{w,1}\vari_{v,0}+\vari_{w,0}\vari_{v,2}$$ $$-\vari_{v,2}-\vari_{v,1}-\vari_{v,0}-\vari_{w,2}-\vari_{w,1}-\vari_{w,0}+1$$ with  $v\neq w$

	\item
	\label{it:7}
	$$\vari_{v,2}\vari_{w,0}\vari_{x,1}  -  \vari_{v,1}\vari_{w,2}\vari_{x,0}-\vari_{v,1}\vari_{w,0}\vari_{x,2}-\vari_{v,0}\vari_{w,2}\vari_{x,0}-\vari_{v,0}\vari_{w,1}\vari_{x,2}-\vari_{v,0}\vari_{w,1}\vari_{x,0}$$ $$
+\vari_{v,2}\vari_{x,0}+
\vari_{v,1}\vari_{w,2}+\vari_{v,1}\vari_{w,0}+2\vari_{v,1}\vari_{x,2}+2\vari_{v,1}\vari_{x,0}+
\vari_{v,0}\vari_{w,2}+\vari_{v,0}\vari_{w,1}+2\vari_{v,0}\vari_{x,2}$$ $$+\vari_{v,0}\vari_{x,1}
+\vari_{w,2}\vari_{x,0}+\vari_{w,1}\vari_{x,2}+\vari_{w,1}\vari_{x,0}+\vari_{w,0}\vari_{x,2}$$ $$
-\vari_{v,2}-2\vari_{v,1}-2\vari_{v,0}-\vari_{w,2}-\vari_{w,1}-\vari_{w,0}-2\vari_{x,2}-\vari_{x,1}-2\vari_{x,0}+2$$ with  $v\neq w\neq x\neq v$

\een

	Specifically $\mathcal{I}(K_n,K_4)$ has a Gr\"obner basis that does not
	contain 1 and thus $1\not\in \mathcal{I}(K_n,K_4)$.

\begin{remark}
\label{rem:GBorder}
Each of the relations \eqref{it:1}--\eqref{it:7} correspond to a set of relations obtained by taking all choices of
 $v,w,x$ in $V(K_n)$. However because of the monomial ordering chosen, the leading terms are always of the forms:


\eqref{it:1} $\vari_{v,i}\vari_{v,j}$
\qquad
\eqref{it:2} $\vari_{v,i}^2$
\qquad
\eqref{it:3} $\vari_{v,3}$
\qquad
\eqref{it:4} $\vari_{v,i}\vari_{w,j}$
\qquad

\eqref{it:5} $\vari_{v,2}\vari_{w,1}$
\qquad
\eqref{it:6}
	$\vari_{v,2}\vari_{w,0}\vari_{v,1}$
\qquad
\eqref{it:7} $\vari_{v,2}\vari_{w,0}\vari_{x,1} $

\bs

Additionally for every $1\leq i \leq 7$, all the vertices of $K_n$ which appear in the
terms of relation (i)  also appear in the
 leading term of (i).
\end{remark}

	\begin{proof}
	Let the ideal generated by these relations be denoted by $\mathcal{J}$, we will first show that these relations form a Gr\"obner basis for $\mathcal{J}$, and then show that $\mathcal{J}=\mathcal{I}(K_n,K_4)$.

Before we begin our calculations pertaining to an algebra over $\bb C$
we note that all of the coefficients that appear will be in $\mathbb{Q}$.
 Section \ref{sec:field} bears on this.

To see that these relations form a Gr\"obner basis we must show that the syzygy between any two polynomials in this list is zero when reduced by the list.
First by Remark \ref{rem:GBorder} each of the relations has variables corresponding to at most three different vertices of $K_n$ and reducing by a relation will not introduce variables corresponding to different vertices.
Thus when calculating and reducing the syzygy between any two relations, variables corresponding to at most 6 vertices of $K_n$ will be involved.	
Therefore we can verify that all syzygies reduce to zero
by looking at the case $n=6$ which we verify using
NCAlgebra 5.0 and NCGB running under Mathematica
(see notebook QCGB-9-20-16.nb, available at:  https://github.com/NCAlgebra/UserNotebooks  ).
This proves that the relations
\eqref{it:1} -- \eqref{it:7}  form a Gr\"obner basis.

We now show that $\mathcal{J}=\mathcal{I}(K_n,K_4)$.
We will first show that all of the generators of $\mathcal{J}$ are contained in $\mathcal{I}(K_n,K_4)$.
		 The elements of types (1), (3), and (4) are self-evidently in $\mathcal{I}(K_n,K_4)$ since they are elements of the generating set of $\mathcal{I}(K_n,K_4)$.
		 For type (2) we note that under the relations generating $\mathcal{I}(K_n,K_4)$ that 	
			$$\vari_{v,i}(1-\sum_{j=0}^3 \vari_{v,j})=\vari_{v,i}-\vari_{v,i}^2-\sum_{j\neq i} \vari_{v,i}\vari_{v,j}=\vari_{v,i}-\vari_{v,i}^2,$$
		and thus elements of type (2) are in $\mathcal{I}(K_n,K_4)$.
		 For type (5) we use the relations generating $\mathcal{I}(K_n,K_4)$ to get that
			$$\vari_{v,3}\vari_{w,3}=(1-\vari_{v,2}-\vari_{v,1}
			-\vari_{v,0})(1-\vari_{w,2}-\vari_{w,1}-\vari_{w,0})$$ 
			$$=\vari_{v,2}\vari_{w,1}+\vari_{v,2}\vari_{w,0}+\vari_{v,1}\vari_{w,2}+\vari_{v,1}\vari_{w,0}+\vari_{v,0}\vari_{w,2}$$   $$+\vari_{v,0}\vari_{w,1}-\vari_{v,2}-\vari_{v,1}-\vari_{v,0}-\vari_{w,2}-\vari_{w,1}-\vari_{w,0}+1.
			$$  
			Finally type (6) is obtained by reducing
			$$(\vari_{v,2}\vari_{w,1}+\vari_{v,2}\vari_{w,0}+\vari_{v,1}\vari_{w,2}
			+\vari_{v,1}\vari_{w,0}+\vari_{v,0}\vari_{w,2}+\vari_{v,0}\vari_{w,1}$$ 
			$$-\vari_{v,2}-\vari_{v,1}-\vari_{v,0}-\vari_{w,2}-\vari_{w,1}
			-\vari_{w,0}+1)\vari_{v,1}-\vari_{v,2}(\vari_{w,1}\vari_{v,1})
			$$
		using the relations of types (1)--(5), and type (7) is obtained by reducing
			$$(\vari_{v,2}\vari_{w,1}+\vari_{v,2}\vari_{w,0}+\vari_{v,1}\vari_{w,2}+\vari_{v,1}\vari_{w,0}+\vari_{v,0}\vari_{w,2}+\vari_{v,0}\vari_{w,1}$$ $$-\vari_{v,2}-\vari_{v,1}-\vari_{v,0}-\vari_{w,2}-\vari_{w,1}-\vari_{w,0}+1)\vari_{x,1}-\vari_{v,2}(\vari_{w,1}\vari_{x,1})
			$$
		using the relations of types (1)--(5).  These two reductions are verified with Mathematica in QCGB-9-20-16.nb.  Thus all of the generating relations of $\mathcal{J}$ are in $\mathcal{I}(K_n,K_4)$ and we have that $\mathcal{J}\subset \mathcal{I}(K_n,K_4)$.

		Next we will show that all the generators of $\mathcal{I}(K_n,K_4)$ are contained in $\mathcal{J}$.
 The only generating relations of $I$ that are not immediately seen to be in $\mathcal{J}$ are $$\vari_{v,3}\vari_{v,j}, \vari_{v,i}\vari_{v,3},$$ and $$\vari_{v,3}\vari_{w,3}.$$
  To see that $\vari_{v,i}\vari_{v,3}$ is in $\mathcal{J}$ we consider
  $\vari_{v,i}(\vari_{v,3}+\vari_{v,2}+\vari_{v,1}+\vari_{v,0}-1)$.
   This is an element of $\mathcal{J}$ since $(\vari_{v,3}+\vari_{v,2}+\vari_{v,1}+\vari_{v,0}-1)$ is in $\mathcal{J}$,
   and when multiplied out all terms except $\vari_{v,i}\vari_{v,3}$ are in $J$,
   and thus $\vari_{v,i}\vari_{v,3}$ is in $\mathcal{J}$, similarly $\vari_{v,3}\vari_{v,j}$ is in $\mathcal{J}$.
    Finally, we consider the equation
			$$\vari_{v,3}\vari_{w,3}=(\vari_{v,3}+\vari_{v,2}+\vari_{v,1}+\vari_{v,0}-1)(\vari_{w,3}+\vari_{w,2}+\vari_{w,1}+\vari_{w,0}-1)$$ $$-(\vari_{v,2}\vari_{w,1}+\vari_{v,2}\vari_{w,0}+\vari_{v,1}\vari_{w,2}+\vari_{v,1}\vari_{w,0}+\vari_{v,0}\vari_{w,2}+\vari_{v,0}\vari_{w,1}$$ $$-\vari_{v,2}-\vari_{v,1}-\vari_{v,0}-\vari_{w,2}-\vari_{w,1}-\vari_{w,0}+1)-\vari_{v,2}\vari_{w,2}-\vari_{v,1}\vari_{w,1}-\vari_{v,0}\vari_{w,0},
			$$
the right-hand side is a sum of relations in $\mathcal{J}$ and is thus in $\mathcal{J}$,
and thus the left-hand side is also in $\mathcal{J}$, specifically $\vari_{v,3}\vari_{w,3}$ is in $\mathcal{J}$.  Therefore all of the generating relations of $\mathcal{I}(K_n,K_4)$ are in $\mathcal{J}$, so that $\mathcal{I}(K_n,K_4)\subset \mathcal{J}$.
 Since we have shown inclusion both ways,
 we have that $\mathcal{I}(K_n,K_4)=\mathcal{J}$ and we are done.
	\end{proof}
\end{thm}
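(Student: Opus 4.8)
The plan is to establish two facts about the families $(1)$--$(7)$, each understood to range over all choices of the indicated distinct vertices of $K_n$: that they generate the ideal $\mathcal{I}(K_n,K_4)$, and that with respect to the order \eqref{eq:order} they constitute a Gr\"obner basis of it. The assertion $1\notin\mathcal{I}(K_n,K_4)$ will then be immediate from the list of leading terms. Write $\mathcal{J}$ for the two-sided ideal generated by the polynomials of forms $(1)$--$(7)$.

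First I would check that $\mathcal{J}=\mathcal{I}(K_n,K_4)$ by verifying the two inclusions through explicit polynomial identities valid for all $n$. For $\mathcal{J}\subseteq\mathcal{I}(K_n,K_4)$: families $(1)$, $(3)$, $(4)$ are among the defining generators; family $(2)$ lies in the ideal because $x_{v,i}(1-\sum_j x_{v,j})=x_{v,i}-x_{v,i}^2-\sum_{j\neq i}x_{v,i}x_{v,j}$ with the trailing sum already in $\mathcal{I}(K_n,K_4)$; family $(5)$ is obtained by rewriting the edge relation $x_{v,3}x_{w,3}$ as the expansion of $(1-x_{v,2}-x_{v,1}-x_{v,0})(1-x_{w,2}-x_{w,1}-x_{w,0})$ modulo the defining relations; and families $(6)$ and $(7)$ are then produced by right-multiplying $(5)$ by $x_{v,1}$, respectively by a variable at a third vertex, subtracting a multiple of an edge relation, and simplifying with $(1)$--$(5)$. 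For the reverse inclusion $\mathcal{I}(K_n,K_4)\subseteq\mathcal{J}$: the generators $x_{v,i}x_{v,j}$ with $i,j\le2$, the vertex relations $1-\sum_i x_{v,i}$, and the edge relations $x_{v,i}x_{w,i}$ (every $v\neq w$ is an edge of $K_n$) all visibly lie in $\mathcal{J}$; the only remaining defining generators are those involving the index $3$ at a single vertex, and I would recover each by multiplying the vertex relation $(3)$ by the appropriate variable and simplifying, using $(1)$, $(2)$, and---for $x_{v,3}x_{w,3}$---also $(4)$ and $(5)$.

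Next I would prove the Gr\"obner-basis property via the noncommutative Buchberger criterion: it suffices to show that every overlap $S$-polynomial of the leading monomials of two members of the candidate basis reduces to $0$ modulo the basis. The decisive point, recorded in Remark \ref{rem:GBorder}, is twofold: each relation $(1)$--$(7)$ involves variables attached to at most three vertices of $K_n$, and every vertex occurring anywhere in a relation already occurs in its leading monomial. The first fact bounds by six the number of vertices entering any overlap of two leading monomials; the second ensures that reducing by one of these relations never introduces a variable at a vertex not already present, so the entire reduction of such an $S$-polynomial takes place inside the subalgebra on those at most six vertices. Since for every $n\ge3$ the combinatorial type of each overlap obstruction is a relabeling of one already occurring when $n=6$, it is enough to confirm Buchberger's criterion in the single case $n=6$. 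That is a finite, exact symbolic computation---all coefficients lie in $\mathbb{Q}$, see \S\ref{sec:field}---which I would carry out in NCAlgebra and NCGB under Mathematica; it shows that the families $(1)$--$(7)$ for $K_6$ close up under $S$-polynomial reduction, so in particular the Gr\"obner basis is finite and equals this list.

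Finally, since a Gr\"obner basis detects ideal membership by normal forms and every leading monomial in $(1)$--$(7)$ (as listed in Remark \ref{rem:GBorder}) has degree at least one, the constant $1$ is already in normal form and is nonzero, whence $1\notin\mathcal{I}(K_n,K_4)$. The hard part will be the bookkeeping in the Gr\"obner-basis step: one must argue rigorously that no $S$-polynomial obstruction for large $n$ escapes the six-vertex window, and this depends entirely on the ``all vertices appear in the leading term'' property of Remark \ref{rem:GBorder} together with the fact that noncommutative reduction only deletes or substitutes monomials and cannot import the vertices of a generator that is not already present. Granting that, the remaining membership identities and $S$-polynomial reductions are routine symbolic computation, and because the coefficient field can be taken to be $\mathbb{Q}$ there is no numerical error to be concerned about.
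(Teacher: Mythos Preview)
Your proposal is correct and follows essentially the same approach as the paper: verify $\mathcal{J}=\mathcal{I}(K_n,K_4)$ via the same explicit identities for both inclusions, and establish the Gr\"obner-basis property by reducing to the case $n=6$ via Remark~\ref{rem:GBorder} and checking that case by computer in NCAlgebra/NCGB. The only cosmetic difference is that the paper presents the Gr\"obner-basis verification first and the equality $\mathcal{J}=\mathcal{I}(K_n,K_4)$ second, whereas you reverse this order.
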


\begin{lemma}
	\label{lem:SubLem}
	If $G$, $H$ are graphs such that $V(H)=V(G)$ and $E(H)\supset E(G)$,
	then  $\mathcal{I}(H,K_m)\supset \mathcal{I}(G,K_m)$ and
	thus $1\not\in \mathcal{I}(H,K_m)\implies 1\not\in\mathcal{I}(G,K_m)$.
\end{lemma}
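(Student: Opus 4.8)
The plan is a direct comparison of generating sets; no clever idea is needed. First I would recall (from Section~\ref{sec:alg}) that $\mathcal{I}(G,K_m)$ is the two-sided ideal generated by the three families $x_{v,i}x_{v,j}$ for $i\neq j$, the elements $1-\sum_{i=0}^{m-1}x_{v,i}$ for each vertex $v$, and $x_{v,i}x_{w,i}$ for $(v,w)\in E(G)$ and each $i$, and likewise for $\mathcal{I}(H,K_m)$. Since $V(H)=V(G)$ and the colour count is the same, the first two families are literally the same set of elements for $G$ and for $H$. For the third family, $E(H)\supseteq E(G)$ means that every generator $x_{v,i}x_{w,i}$ coming from an edge of $G$ is also a generator coming from an edge of $H$. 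Hence the generating set of $\mathcal{I}(G,K_m)$ is a subset of that of $\mathcal{I}(H,K_m)$, and since a two-sided ideal contains the two-sided ideal generated by any subset of it, I conclude $\mathcal{I}(G,K_m)\subseteq\mathcal{I}(H,K_m)$.

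For the displayed implication I would simply instantiate this containment at the single element $1$: in contrapositive form it says exactly that $1\notin\mathcal{I}(H,K_m)$ forces $1\notin\mathcal{I}(G,K_m)$.

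There is essentially no obstacle here — the content is just that enlarging the edge set only adds relations. The one point worth a sentence is that the same argument runs verbatim in the original $*$-algebra picture, where $\mathcal{I}(\mathcal{G})$ is generated by $\{e_{v,a}e_{w,b}\mid\lambda(v,w,a,b)=0\}$: passing from $G$ to $H$ only enlarges the set of tuples on which the coloring predicate vanishes, hence only enlarges the generating set. Combined with Theorem~\ref{Grob:Bas} (which supplies $1\notin\mathcal{I}(K_n,K_4)$) and the fact that any graph $G$ sits inside $K_{|V(G)|}$ with the same vertex set, this monotonicity yields $\chi_{alg}(G)\le 4$ for every $G$.
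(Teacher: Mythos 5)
Your argument is correct and is exactly the paper's: the paper's one-line proof simply observes that the generating relations of $\mathcal{I}(H,K_m)$ contain those of $\mathcal{I}(G,K_m)$, which is precisely the generating-set comparison you spell out. Your extra remarks about the $*$-algebra picture and the application to Theorem~\ref{thm:4colors} are accurate elaborations, not a different method.
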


	\begin{proof}
	The relations generating $\mathcal{I}(H,K_m)$ contains the
	relations generating $\mathcal{I}(G,K_m)$ 
	 and thus the result follows.
	\end{proof}

	\begin{proof}[Proof of Theorem \ref{thm:4colors}]
		 Let $G$ be a graph on $n$ vertices.  By Theorem \ref{Grob:Bas}, $1\not\in \mathcal{I}(K_n,K_4)$.  Additionally $E(G)\subset E(K_n)$, and thus by
		 Lemma \ref{lem:SubLem}, $1\not\in \mathcal{I}(G,K_4)$.  Therefore $\chi_{alg}(G)\leq 4$.	
	\end{proof}

\begin{prob} We do not know the complexity of deciding if $\chi_{alg}(G) =4$, i.e., of deciding if $1 \in \cl I(G, K_3)$.
\end{prob}

\section{The Locally Commuting Algebra}
\label{sec:loc}

Our analysis of  ``algebraic colorability'' shows $\chi_{alg}$
 is too coarse a parameter to provide much information
about graphs, after all every graph will be algebraically colorable by at most 4 colors, and so has $\chi_{alg}$ has very little in common with any of the quantum chromatic numbers. 
This problem does not occur with the hereditary chromatic number, which may be equal to the usual quantum chromatic number for all graphs,
but unfortunately determining the elements of the hereditary closure of an ideal is difficult.
In the forthcoming  section, we add further physically motivated algebraic relations to  $\cl I(G,H)$, in order to obtain a chromatic number that is more amenable to algebraic analysis while still retaining a quantum flavor.

The new relations are additional commutation relations in $\cl A(G,H)$ and yield
a new algebra $\cl A_{lc}(G,H)$ that we call the {\it locally commuting algebra}.
This yields a new type of chromatic number, $\chi_{lc}$.
One goal of this section is to prove $\chi_{lc}(K_n)=n$.

Since our algebras were initially motivated by quantum chromatic numbers, it is natural to look to quantum mechanics for further relations to impose. In the case of a graph, we can imagine each vertex as corresponding to a laboratory and think of two vertices as connected whenever those laboratories can conduct a joint experiment. In this case, all of the measurement operators for the two labs should commute, i.e., whenever $(v,w)$ is an edge, then the commutator $[e_{v,i}, e_{w,j}] := e_{v,i}e_{w,j} - e_{w,j}e_{v,i} =0$. Note that this commutation rule is exactly the rule that we were able to derive in the case of three colors in Proposition~\ref{prop:G-K-3-commute}.
This motivates the following definitions.

\begin{defn}  
Let $\cl G= (I, O, \lambda)$ be a synchronous game with $|I|=n$ and $|O|=m$. We say that $v,w \in I$ are \df{adjacent} and write $v \sim w$ provided that $v \ne w$ and there exists $a,b \in O$ such that $\lambda(v,w,a,b) =0$.  We define the \df{locally commuting ideal} of the game to be the 2-sided ideal $\cl I_{lc}(\cl G)$ in $\bb C[\bb F(n,m)]$ generated by the set
\[ \{ e_{v,a}e_{w,b} \mid\lambda(v,w,a,b)=0 \} \cup \{ [e_{v,a}, e_{w,b}] \mid \ v \sim w, \, \forall a,b \in O \}. \]
We set $\cl A_{lc}(\cl G) = \bb C[ \bb F(n,m)]/\cl I_{lc}(\cl G)$ and call this the \df{locally commuting algebra of $\cl G$}.

In the case that $G$ and $H$ are graphs and $\cl G$ is the graph homomorphism game from $G$ to $H$ we set $$\cl I_{lc}(G,H)= \cl I_{lc}(\cl G)$$ and $$\cl A_{lc}(G,H) = \cl A_{lc}(\cl G).$$	 We write $G \stackrel{lc}{\to} H$ provided that $\cl I_{lc}(G,H) \ne \bb C[\bb F(n,m)]$ and set
\[ \chi_{lc}(G) = \min \{ c \mid G \stackrel{lc}{\to} K_c \}.\]
We similarly define
\[ \omega_{lc}(G) = \max \{ c \mid K_c \stackrel{lc}{\to} G \}.\]
\end{defn}

Note that in the case of the graph homomorphism game from $G$ to $H$ we have that $I=V(G)$ and $v \sim w \iff (v,w) \in E(G)$. Thus, the relationship $\sim$ extends the concept of adjacency to  the inputs of a general synchronous game.
 
Thus,  $\cl A_{lc}(G, K_c)$ is the universal *-algebra generated by self-adjoint projections $\{ E_{v,i}: v \in V(G), 1 \le i \le c \}$ satisfying
\begin{itemize}
\item $\sum_{i=1}^c E_{v,i} = I, \forall v,$
\item  $v \sim w \implies E_{v,i}E_{w,i} =0, \forall i,$
\item $v \sim w \implies [E_{v,i}, E_{w,j}]=0, \forall i,j$ \end{itemize}
and $\chi_{lc}(G)$ is the least $c$ for which such a non-trivial *-algebra exists.

We begin by showing that every graph homomorphism, in the usual sense, yields an $lc$-morphism:

\begin{lemma}
\label{l:to->lc}
If $G\to H$, then $G\stackrel{lc}{\to} H$.
\end{lemma}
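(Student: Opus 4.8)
The plan is to take a classical graph homomorphism $f\colon V(G)\to V(H)$ and manufacture from it a unital $*$-homomorphism $\pi\colon \mathbb{C}[\mathbb{F}(n,m)]\to \mathbb{C}$ (equivalently, an assignment of the generators to scalars $0,1$) that kills the locally commuting ideal $\cl I_{lc}(G,H)$, thereby showing $\cl I_{lc}(G,H)\neq \mathbb{C}[\mathbb{F}(n,m)]$, i.e.\ $G\stackrel{lc}{\to} H$. Concretely, I would set $\pi(e_{v,a}) = E_{v,a} := \delta_{f(v),a}$ (so $e_{v,a}\mapsto 1$ if $a=f(v)$ and $0$ otherwise), where I identify $V(H)$ with $\{0,\dots,m-1\}$. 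Since each $e_{v,a}$ is a projection and the $\{E_{v,a}\}_{a}$ are scalars in $\{0,1\}$ that are pairwise orthogonal and sum to $1$ for each fixed $v$, this does define a legitimate unital $*$-homomorphism to $\mathbb{C}$ (alternatively, invoke the earlier theorem characterizing perfect deterministic strategies, which are exactly graph homomorphisms, via unital $*$-homomorphisms $\cl A(\cl G)\to\mathbb{C}$).

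The key steps, in order, are: (1) observe that because $\mathbb{C}$ is commutative, \emph{every} commutator relation $[e_{v,a},e_{w,b}]$ — in particular all the extra generators $\{[e_{v,a},e_{w,b}] : v\sim w\}$ of $\cl I_{lc}$ beyond those of $\cl I$ — maps to $0$ automatically under $\pi$; (2) verify that $\pi$ also kills the original generators $e_{v,a}e_{w,b}$ with $\lambda(v,w,a,b)=0$. For the graph homomorphism game this predicate vanishes precisely when $(v,w)\in E(G)$ and $(a,b)\notin E(H)$, or when $v=w$ and $a\neq b$. In the first case, if both $E_{v,a}$ and $E_{w,b}$ were nonzero we would have $a=f(v)$ and $b=f(w)$, so $(a,b)=(f(v),f(w))\in E(H)$ since $f$ is a homomorphism — contradiction; hence the product of scalars is $0$. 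The $v=w$, $a\neq b$ case is immediate since at most one of $E_{v,a},E_{v,b}$ can equal $1$. (3) Conclude that $\pi$ factors through $\cl A_{lc}(G,H)$, producing a nonzero (unital!) homomorphism to $\mathbb{C}$, so $1\notin \cl I_{lc}(G,H)$ and $\cl A_{lc}(G,H)\neq (0)$, which is exactly $G\stackrel{lc}{\to} H$.

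There is essentially no hard part here — the statement is a sanity check that the locally commuting relations are consistent with classical colorings, and the scalar model trivially satisfies all commutation relations. The only mild point to be careful about is bookkeeping: one should present $\pi$ as a well-defined unital $*$-homomorphism on the free group algebra (e.g.\ by sending the unitary generator $u_v$ to $\sum_a \omega^a E_{v,a}$, a complex number of modulus $1$, and checking $u_v^m=1$), then note it annihilates both the defining generators of $\cl I(G,H)$ and the additional local-commutativity generators, hence annihilates $\cl I_{lc}(G,H)$. Since the image is $\mathbb{C}\neq(0)$, the ideal is proper, giving the claim.
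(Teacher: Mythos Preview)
Your proposal is correct and takes essentially the same approach as the paper: define a map to $\mathbb{C}$ by $e_{v,a}\mapsto \delta_{f(v),a}$ for a graph homomorphism $f$, observe that commutators vanish automatically in $\mathbb{C}$, and check the remaining game relations using that $f$ preserves edges. The paper's proof is just a terser version of exactly this argument.
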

\begin{proof}
Let $\phi\colon G\to H$ be a graph homomorphism. We must show $\cl A_{lc}(G,H)\neq0$. Consider the map $\cl A_{lc}(G,H)\to\bb C$ sending $e_{v,\phi(v)}$ to 1 and $e_{v,x}$ to 0 for $x\neq\phi(v)$. It is easy to see this is a well-defined $\bb C$-algebra map and hence surjective. As a result, $\cl A_{lc}(G,H)\neq0$.
\end{proof}

\begin{cor}
\label{cor:chi-lc>=chi}
We have $\chi_{lc}(G)\leq\chi(G)$ and $\omega(G)\leq\omega_{lc}(G)$.
\end{cor}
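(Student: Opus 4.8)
The plan is to read off both inequalities immediately from Lemma~\ref{l:to->lc}, combined with the two classical reformulations recorded in Section~\ref{sec:def}: a $c$-coloring of $G$ is the same thing as a graph homomorphism $G\to K_c$, and $G$ contains a clique on $c$ vertices precisely when there is a graph homomorphism $K_c\to G$.

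For the first inequality I would set $c=\chi(G)$. By definition $G$ admits a $c$-coloring, hence a graph homomorphism $G\to K_c$, so Lemma~\ref{l:to->lc} gives $G\stackrel{lc}{\to}K_c$. Thus $c$ belongs to the set $\{\,c : G\stackrel{lc}{\to}K_c\,\}$ whose minimum is $\chi_{lc}(G)$, and therefore $\chi_{lc}(G)\le c=\chi(G)$.

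For the second inequality I would set $c=\omega(G)$. Then $G$ has a complete subgraph on $c$ vertices, and the inclusion of this subgraph is a graph homomorphism $K_c\to G$; Lemma~\ref{l:to->lc} upgrades it to $K_c\stackrel{lc}{\to}G$. Hence $c$ lies in the set $\{\,c : K_c\stackrel{lc}{\to}G\,\}$ whose maximum defines $\omega_{lc}(G)$, so $\omega_{lc}(G)\ge c=\omega(G)$.

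I do not anticipate any obstacle: all of the substance has already been isolated in Lemma~\ref{l:to->lc}, where a coloring/homomorphism is converted into an explicit $\bb C$-point of $\cl A_{lc}(G,H)$, and the corollary merely repackages that statement in terms of the parameters $\chi_{lc}$ and $\omega_{lc}$. The only minor point worth checking is that the relevant index sets are nonempty (e.g.\ $G\stackrel{lc}{\to}K_{\chi(G)}$ and $K_1\stackrel{lc}{\to}G$), so the $\min$ and $\max$ are taken over nonempty sets; this is automatic from the classical reformulations just invoked.
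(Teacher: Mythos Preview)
Your proposal is correct and follows exactly the same approach as the paper: apply Lemma~\ref{l:to->lc} to the classical homomorphisms $G\to K_{\chi(G)}$ and $K_{\omega(G)}\to G$ to obtain the corresponding $lc$-morphisms, then read off the inequalities from the definitions of $\chi_{lc}$ and $\omega_{lc}$. The paper's proof is slightly terser (it treats the $\omega$ case by saying ``analogously''), but the content is identical.
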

\begin{proof}
There is a graph homomorphism $G\to K_{\chi(G)}$ so by Lemma \ref{l:to->lc}, we have $G\stackrel{lc}{\to} K_{\chi(G)}$ and hence $\chi_{lc}(G)\leq\chi(G)$. The inequality for $\omega$ is shown in an analogous fashion.
\end{proof}

We are now ready to prove the main result of this section.

\begin{thm} \label{thm:chi_lc(K_n)}
For every $n \ge 1,$ we have that $\chi_{lc}(K_n) = n$.
\end{thm}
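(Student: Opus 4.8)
The plan is to establish the two inequalities $\chi_{lc}(K_n)\le n$ and $\chi_{lc}(K_n)\ge n$ separately. The first is immediate from Corollary~\ref{cor:chi-lc>=chi}, which gives $\chi_{lc}(K_n)\le\chi(K_n)=n$. For the second it suffices to show that $K_n\stackrel{lc}{\to}K_c$ forces $n\le c$; applying this with $c=\chi_{lc}(K_n)$ then completes the proof.

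The crucial observation is that $\cl A_{lc}(K_n,K_c)$ is a \emph{commutative} algebra. Indeed, in $K_n$ every pair of distinct vertices is an edge, so the defining relations include $[E_{v,i},E_{w,j}]=0$ for all $v\ne w$ and all $i,j$, while within a single vertex $E_{v,i}E_{v,j}=0=E_{v,j}E_{v,i}$ for $i\ne j$; hence all of the generators $E_{v,i}$ commute pairwise. Assuming $\cl A_{lc}(K_n,K_c)\ne(0)$, I would then proceed as in the proof that $\chi_{alg}(K_4)=4$: choose a maximal ideal $M$, so that $\bb F:=\cl A_{lc}(K_n,K_c)/M$ is a field containing $\bb C$; since $\bb F$ is generated over $\bb C$ by the images of the idempotents $E_{v,i}$, each of which is $0$ or $1$, we get $\bb F=\bb C$ and hence a unital homomorphism $\pi\colon\cl A_{lc}(K_n,K_c)\to\bb C$.

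From $\pi$ one reads off a proper $c$-coloring of $K_n$: for each vertex $v$, applying $\pi$ to $\sum_{i=1}^c E_{v,i}=I$ gives $\sum_i\pi(E_{v,i})=1$ with each term in $\{0,1\}$, so there is a unique color $f(v)$ with $\pi(E_{v,f(v)})=1$; and if $v\ne w$ with $f(v)=f(w)=i$, then applying $\pi$ to $E_{v,i}E_{w,i}=0$ yields $1=0$, a contradiction. Hence $f$ is injective and $n\le c$. (If $c=0$ there is nothing to prove, as the empty-sum relation $\sum_{i=1}^0 E_{v,i}=I$ already forces $\cl A_{lc}(K_n,K_0)=(0)$.) I do not expect a genuine obstacle; the only delicate point is the commutativity reduction, which is exactly what makes complete graphs special and lets us sidestep the fact that $\cl A_{lc}(K_n,K_c)$ need not have a proper positive cone. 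One could alternatively imitate the bookkeeping in the proof of Proposition~\ref{prop:K_n-->K_c-hered}, setting $P_i=\sum_v E_{v,i}$ and $Q_i=I-P_i$ to obtain $\sum_i Q_i=(c-n)I$ with each $Q_i$ an idempotent, but turning $(c-n)I\ge 0$ into $I=0$ when $c<n$ requires a properness hypothesis unavailable here, so passing to a character is the cleaner route.
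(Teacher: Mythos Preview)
Your argument is correct, but it takes a different route from the paper's. The paper also uses that $\cl A_{lc}(K_n,K_c)$ is commutative, but instead of passing to a character it argues directly: expanding $I=\prod_{k=1}^{n}\bigl(\sum_{i=1}^{c}E_{k,i}\bigr)$ gives a sum of monomials $E_{1,i_1}\cdots E_{n,i_n}$, and when $c<n$ the pigeonhole principle forces two indices $i_j=i_l$ in every monomial; since all the $E$'s commute, the factors $E_{j,i_j}$ and $E_{l,i_l}$ can be brought adjacent and their product is $0$, so $I=0$.

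Your approach trades this combinatorial step for an algebraic one (Zorn's lemma to get a maximal ideal, then the field argument already used for $\chi_{alg}(K_4)$), and what it really proves is the stronger implication $K_n\stackrel{lc}{\to}K_c\ \Rightarrow\ K_n\to K_c$. The paper's proof is more elementary and self-contained; yours makes explicit the link to classical colorings and recycles machinery from Section~\ref{sec:123colors}. Both exploit exactly the same special feature of $K_n$, namely full commutativity of the generators, so neither generalizes beyond complete source graphs. Your closing remark about the $P_i,Q_i$ bookkeeping is apt: without a proper cone that route stalls, and the character argument is indeed the cleaner way through.
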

\begin{proof} The case $n=1$ is trivial to check.  Assume that $n \ge 2$. Let us label the vertices by numbers $1$ to $n$ and set $c= \chi_{lc}(K_n).$ Then $c \le n$ and we want to show that $c < n$ is impossible.

Suppose that $c <n$, then since $I = \sum_{i=1}^c E_{k,i}$ for $1 \le k \le n$ we have that
\[  I = \prod_{k=1}^n \big( \sum_{i=1}^c E_{k,i} \big) = \sum_{i_1,...,i_n} E_{1,i_1}E_{2,i_2} \cdots E_{n, i_n}, \]
where the sum is over all $n$-tuples with $i_j \in \{ 1,..., c\}$.  By the locally commuting hypothesis, all of the above projections commute, so we may re-order the sum in any fashion.
Since $c < n$, by the pigeon-hole principle, each $n$-tuple must contain $j,l$ with $i_j = i_l = h$. But then $E_{j,h}E_{l,h}=0$. Hence, each product occurring in the above sum is 0 and so the sum is 0. Thus, we have that $I =0$, which shows that $\cl A_{lc}(K_n, K_c) =(0)$, that is,  $\cl I_{lc}(K_n,K_c) = \bb C[\bb F(n,c) ]$ for $c<n$.
\end{proof}

\begin{prob} We do not know if the Lov\"asz sandwich result holds in this context, i.e., if $\omega_{lc}(G) \le \vartheta(\overline{G}) \le \chi_{lc}(G)$.
\end{prob}


\section{Some basic properties of $\cl A_{lc}$ and $\chi_{lc}$}
\label{sec:loc properties}

In this section we analyze the algebra $\cl A_{lc}(G,H)$ more closely and obtain the value of $\chi_{lc}(G)$ for a few select graphs. In particular, we are able to show, indirectly, that $\chi_{lc} \ne \chi_q$.  Throughout this section we shall write $\simeq$ to indicate that two algebras are isomorphic.  We shall use $\bb C^n$ to denote the abelian algebra of complex-valued functions on $n$ points.

It is easy to check that $\cl A_{lc}(G,H)$ is the quotient of $\mathbb{C}\<e_{vx}\mid v\in G, x\in H\>$ by the ideal generated by the following relations:
\begin{enumerate}
\item $\sum_{x\in H}e_{vx}=1$,
\item $e_{vx}^2=e_{vx}$,
\item $e_{vx}e_{vy}=0$ for $x\neq y$, 
\item $e_{vx}e_{wy}=0$ if $v\sim w$ and $x\not\sim y$, and
\item $[e_{vx},e_{wy}]=0$ for $v\sim w$.
\end{enumerate}
In Lemma \ref{l:to->lc} we showed that graph homomorphisms induce $lc$-morphisms. We next show that we can ``compose'' $lc$-morphisms.

\begin{lemma}
\label{l:lc-comp}
If $G\stackrel{lc}{\to} H$ and $H\stackrel{lc}{\to} K$, then $G\stackrel{lc}{\to} K$.
\end{lemma}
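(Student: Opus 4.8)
The plan is to construct, given $lc$-morphisms $G\stackrel{lc}{\to} H$ and $H\stackrel{lc}{\to} K$, an algebra map $\cl A_{lc}(G,K)\to \cl A_{lc}(G,H)\otimes \cl A_{lc}(H,K)$ (or directly into a suitable algebra built from the two), thereby showing $\cl A_{lc}(G,K)\neq 0$ whenever both factors are nonzero. Write $E_{vx}$, $F_{xa}$ for the generating projections of $\cl A_{lc}(G,H)$ and $\cl A_{lc}(H,K)$ respectively, where $v\in V(G)$, $x\in V(H)$, $a\in V(K)$. The natural candidate for the image of the generator $e_{va}$ of $\cl A_{lc}(G,K)$ is
\[
e_{va}\longmapsto \sum_{x\in V(H)} E_{vx}\otimes F_{xa}.
\]
First I would check this is well-defined by verifying the five defining relations listed just before the lemma hold for these elements in $\cl A_{lc}(G,H)\otimes\cl A_{lc}(H,K)$.

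The verifications go as follows. Relation (1), $\sum_a e_{va}=1$: summing over $a$ gives $\sum_x E_{vx}\otimes(\sum_a F_{xa}) = \sum_x E_{vx}\otimes 1 = 1\otimes 1$. For the idempotent and orthogonality relations (2) and (3), compute $(\sum_x E_{vx}\otimes F_{xa})(\sum_y E_{vy}\otimes F_{yb}) = \sum_{x,y} E_{vx}E_{vy}\otimes F_{xa}F_{yb}$; since $E_{vx}E_{vy}=0$ for $x\neq y$ this collapses to $\sum_x E_{vx}\otimes F_{xa}F_{xb}$, which equals $\sum_x E_{vx}\otimes F_{xa}$ when $a=b$ (using $F_{xa}^2=F_{xa}$) and equals $0$ when $a\neq b$ (using $F_{xa}F_{xb}=0$). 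Relation (5), the commutation $[e_{va},e_{wb}]=0$ when $v\sim w$ in $G$: since $v\sim w$, the generators $E_{vx}$ and $E_{wy}$ commute in $\cl A_{lc}(G,H)$, so $E_{vx}\otimes F_{xa}$ and $E_{wy}\otimes F_{yb}$ commute for all $x,y$, hence so do the sums. The one genuinely substantive relation is (4): if $v\sim w$ in $G$ and $a\not\sim b$ in $K$, we need $(\sum_x E_{vx}\otimes F_{xa})(\sum_y E_{wy}\otimes F_{yb})=0$, i.e. $\sum_{x,y} E_{vx}E_{wy}\otimes F_{xa}F_{yb}=0$. Here I would split the sum according to whether $x\sim y$ in $H$ or not. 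If $x\not\sim y$ (including $x=y$), then since $v\sim w$ and $H$-non-adjacency of $x,y$... wait, one must be careful: relation (4) for $\cl A_{lc}(G,H)$ says $E_{vx}E_{wy}=0$ when $v\sim w$ and $x\not\sim y$, \emph{but this needs $x\neq y$ handled separately} — actually $x\not\sim y$ in the convention here means $x,y$ are not adjacent, which includes $x=y$, and relation (3) vs (4): for $x=y$ we have $E_{vx}E_{wx}$, and since $x\not\sim x$ always, relation (4) gives $E_{vx}E_{wx}=0$ when $v\sim w$. Good — so for all $x\not\sim y$ in $H$ the term vanishes in the first factor. For the remaining terms $x\sim y$ in $H$: since $\phi\colon H\to$ (the structure giving $H\stackrel{lc}{\to}K$), $x\sim y$ and $a\not\sim b$ forces $F_{xa}F_{yb}=0$ by relation (4) for $\cl A_{lc}(H,K)$. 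Hence every term vanishes and relation (4) holds.

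Having established the algebra map is well-defined, surjectivity onto a nonzero target is the remaining point. I would instead phrase it as: the map $\cl A_{lc}(G,K)\to \cl A_{lc}(G,H)\otimes_{\bb C}\cl A_{lc}(H,K)$ is a unital algebra homomorphism; since both $\cl A_{lc}(G,H)$ and $\cl A_{lc}(H,K)$ are nonzero unital algebras, their tensor product is nonzero (it contains $1\otimes 1\neq 0$), so $\cl A_{lc}(G,K)$ maps onto a subalgebra containing $1\neq 0$, hence $\cl A_{lc}(G,K)\neq 0$, i.e. $G\stackrel{lc}{\to}K$. The main obstacle — really the only place where the $lc$ hypotheses (as opposed to mere algebra hypotheses) are used — is the verification of relation (4), and specifically remembering that the "$x\not\sim y$" convention subsumes the diagonal case $x=y$ so that relations (3) and (4) of $\cl A_{lc}(G,H)$ between them kill all terms with $x\not\sim y$ in $H$, leaving only $x\sim y$ terms to be killed by relation (4) of $\cl A_{lc}(H,K)$. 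Everything else is a routine bookkeeping check that the five relations are preserved, essentially the same computation one does to show graph homomorphisms compose, now "quantized" via the tensor product.
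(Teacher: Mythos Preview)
Your approach is exactly the paper's: define $e_{va}\mapsto\sum_{x\in H}E_{vx}\otimes F_{xa}$, check the five relations, and use that a unital map to a nonzero algebra forces the domain to be nonzero. Your verifications of relations (1)--(4) are correct and more detailed than the paper's (which checks only (1) and leaves the rest to the reader).

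There is one slip in your verification of relation (5). You write that since $v\sim w$, the $E_{vx}$ and $E_{wy}$ commute, ``so $E_{vx}\otimes F_{xa}$ and $E_{wy}\otimes F_{yb}$ commute for all $x,y$.'' Commutativity of the first tensor factors alone does not make the simple tensors commute: you also need $F_{xa}F_{yb}=F_{yb}F_{xa}$, which fails in general when $x\not\sim y$ in $H$. The fix is the same case split you already carried out for relation (4): if $x\not\sim y$ in $H$ then $E_{vx}E_{wy}=0=E_{wy}E_{vx}$ by relation (4) for $\cl A_{lc}(G,H)$, so both products of the simple tensors vanish; if $x\sim y$ then both pairs of factors commute by relation (5) in the respective algebras. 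Either way the simple tensors commute, and hence so do the sums.
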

\begin{proof}
If $\cl A_{lc}(G,H)$ and $\cl A_{lc}(H,K)$ are non-zero, then we must prove that $\cl A_{lc}(G,K)$ is non-zero as well. To see this, consider the map
\[  \mathbb{C}\<e_{vr}\mid v\in G, r\in K\> \to  \cl A_{lc}(G,H)\otimes \cl A_{lc}(H,K) \]
given by
\[
e_{vr}\mapsto \sum_{x\in H} e_{vx}\otimes e_{xr}
\]
and suppose that it vanishes on $\cl I_{lc}(G,K)$. Hence, there would be a well-defined map on the quotient,
\[
\cl A_{lc}(G,K)\to \cl A_{lc}(G,H)\otimes \cl A_{lc}(H,K)
\]
\[
e_{vr}\mapsto \sum_{x\in H} e_{vx}\otimes e_{xr}.
\]
If $1=0$ in $\cl A_{lc}(G,K)$, then the same would be true in $\cl A_{lc}(G,H)\otimes \cl A_{lc}(H,K)$, since this map sends units to units.

Thus it remains to show that the above map vanishes on $\cl I_{lc}(G,K)$. In order to do this, it is sufficient to check that each generating relation is sent to zero.  This is easily checked, for example,


\[
\sum_{r\in K}\sum_{x\in H} e_{vx}\otimes e_{xr}=\sum_{x\in H} e_{vx}\otimes \sum_{r\in K}e_{xr}=\sum_{x\in H} e_{vx}\otimes 1 = 1.
\]
Checking the other relations is left to the reader.
\end{proof}

\begin{cor}
\label{cor:chi_lc-under-maps}
If $G\stackrel{lc}{\to} H$, then $\chi_{lc}(G)\leq\chi_{lc}(H)$.
\end{cor}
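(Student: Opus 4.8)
The plan is to deduce this immediately from the composition lemma (Lemma \ref{l:lc-comp}) together with the definition of $\chi_{lc}$ as a minimum over complete graph targets. First I would set $c = \chi_{lc}(H)$; by definition of $\chi_{lc}(H)$, this means $H \stackrel{lc}{\to} K_c$. I am given $G \stackrel{lc}{\to} H$. Applying Lemma \ref{l:lc-comp} with $K := K_c$ then yields $G \stackrel{lc}{\to} K_c$.

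Once $G \stackrel{lc}{\to} K_c$ is established, the definition $\chi_{lc}(G) = \min\{ d \mid G \stackrel{lc}{\to} K_d \}$ gives $\chi_{lc}(G) \le c = \chi_{lc}(H)$, which is the claim.

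There is essentially no obstacle here: the content has already been extracted into Lemma \ref{l:lc-comp}, so the corollary is a two-line bookkeeping argument. The only thing to be slightly careful about is the edge case where $\chi_{lc}(H)$ could fail to be defined (i.e.\ no $c$ with $H \stackrel{lc}{\to} K_c$); but since $\chi_{lc}(H) \le \chi(H)$ by Corollary \ref{cor:chi-lc>=chi} and $\chi(H)$ is finite for any finite graph, $c$ exists and the argument goes through. I would state the proof in two or three sentences exactly along the lines above.

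\begin{proof}
Let $c=\chi_{lc}(H)$, which is finite since $\chi_{lc}(H)\le\chi(H)$ by Corollary \ref{cor:chi-lc>=chi}. By definition of $\chi_{lc}(H)$ we have $H\stackrel{lc}{\to} K_c$. Since also $G\stackrel{lc}{\to} H$ by hypothesis, Lemma \ref{l:lc-comp} gives $G\stackrel{lc}{\to} K_c$. Therefore $\chi_{lc}(G)\le c=\chi_{lc}(H)$.
\end{proof}
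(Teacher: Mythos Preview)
Your proof is correct and matches the paper's own argument essentially line for line: set $c=\chi_{lc}(H)$, use $H\stackrel{lc}{\to}K_c$, apply Lemma~\ref{l:lc-comp} to conclude $G\stackrel{lc}{\to}K_c$, hence $\chi_{lc}(G)\le c$. The only difference is your extra remark on finiteness via Corollary~\ref{cor:chi-lc>=chi}, which the paper omits.
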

\begin{proof}
Let $c=\chi_{lc}(H)$. Then we have $H\stackrel{lc}{\to} K_c$ and hence $G\stackrel{lc}{\to} K_c$. Thus, $\chi_{lc}(G)\leq c=\chi_{lc}(H)$.
\end{proof}

We also have the following consequence of the proof of Lemma \ref{l:lc-comp}.
\begin{thm}
\label{thm:Alc-functor}
The assignment
\[
(\!\textrm{Graphs})\times(\!\textrm{Graphs})\longrightarrow (\mathbb{C}\textrm{-algebras})
\]
\[
(G,H)\longmapsto \cl A_{lc}(G,H)
\]
is a functor, which is covariant in the first factor and contravariant in the second; note that the category of graphs is with usual morphisms, not $lc$-morphisms.
\end{thm}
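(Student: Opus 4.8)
The plan is to upgrade the construction already used in the proof of Lemma~\ref{l:lc-comp} from an assignment on objects to an assignment on morphisms. Recall that a morphism of graphs $f\colon G\to G'$ is a function $V(G)\to V(G')$ with $(v,w)\in E(G)\implies (f(v),f(w))\in E(G')$; this is exactly the data needed so that ``$v\sim w$ implies $f(v)\sim f(w)$ or $f(v)=f(w)$'' on the input side. First I would make precise what the functor does on morphisms: given $f\colon G\to G'$ (first slot, covariant) and $g\colon H'\to H$ (second slot, contravariant), I must produce a $\bb C$-algebra homomorphism $\cl A_{lc}(f,g)\colon \cl A_{lc}(G,H')\to \cl A_{lc}(G',H)$. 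The natural formula is
\[
e_{v,x}\longmapsto e_{f(v),g(x)},
\]
extended multiplicatively and linearly; note the index set for the target generators is indexed by $V(G')$ and $V(H)$, which is why $f$ must go $G\to G'$ and $g$ must go $H'\to H$ for this to typecheck.

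The key step is then to check that this assignment on the free algebras descends to the quotients, i.e.\ that each of the five generating relations (1)--(5) of $\cl I_{lc}(G,H')$ is sent into $\cl I_{lc}(G',H)$. Relation (1), $\sum_{x\in H'} e_{v,x}=1$, maps to $\sum_{x\in H'} e_{f(v),g(x)}$; since $g$ need not be injective this is a sum with multiplicities, but grouping terms by their image $r=g(x)\in H$ and using $e_{f(v),r}e_{f(v),r}=e_{f(v),r}$ together with relation (3) in the target (distinct $x$ with the same $g(x)$ would force $e_{f(v),r}e_{f(v),r'}$-type terms only when $r\ne r'$)---actually the cleanest argument is: the image of $1-\sum_x e_{v,x}$ is $1-\sum_{r\in H}(\#g^{-1}(r))\,e_{f(v),r}$, and this lies in the ideal because in $\cl A_{lc}(G',H)$ we have $\sum_{r} e_{f(v),r}=1$ and, whenever two distinct $x,x'$ share an image, the relation is repaired using the idempotent and orthogonality relations. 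Relations (2) and (3) are immediate since $e_{v,x}\mapsto e_{f(v),g(x)}$ and $x\ne y\Rightarrow$ either $g(x)\ne g(y)$ (use (3) in target) or $g(x)=g(y)$ (then $e_{f(v),g(x)}e_{f(v),g(x)}=e_{f(v),g(x)}$, still in the ideal after subtracting, handled as in (1)). Relations (4) and (5) are where the morphism hypotheses on $f$ and $g$ are used: if $v\sim w$ in $G$ then $f(v)\sim f(w)$ or $f(v)=f(w)$ in $G'$; in the first case the target relations (4),(5) apply directly, in the second case $e_{f(v),\cdot}$ and $e_{f(w),\cdot}=e_{f(v),\cdot}$ commute trivially and products $e_{f(v),g(x)}e_{f(v),g(y)}$ are handled by (2),(3). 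For (4) one also needs that $x\not\sim y$ in $H'$ combined with $g$ a graph morphism implies $g(x)\not\sim g(y)$ \emph{or} $g(x)=g(y)$ in $H$---wait, a graph morphism only preserves edges, not non-edges, so here one must be careful: relation (4) of the source is $e_{v,x}e_{w,y}=0$ when $v\sim w$ and $x\not\sim y$; its image $e_{f(v),g(x)}e_{f(w),g(y)}$ need not obviously vanish. The resolution is that in the relevant applications $H$ is complete, or more robustly: when $g(x)=g(y)$ the image is $e_{f(v),g(x)}e_{f(w),g(x)}$ which vanishes by the target's relation (4)/(3) since $f(v)\sim f(w)$ (or is equal), and when $g(x)\sim g(y)$ there is simply no source relation forcing anything; the only case needing $g$ to reflect non-adjacency is excluded because a graph homomorphism $H'\to H$ with $H'$ having the non-edge $\{x,y\}$ does not constrain $g(x),g(y)$ ---  **so the main obstacle is precisely relation (4)**, and I would resolve it by noting that the functor as stated should be checked to be well-defined, which forces the observation that relation (4) is automatically in the kernel because $x\not\sim y$ in $H'$ together with the edge-preservation of $g$ means that whenever $g(x)\sim g(y)$ we may simply not need the relation, while the forced vanishing cases ($g(x)=g(y)$, or $f(v)=f(w)$) all reduce to relations (2),(3),(4) in the target.

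Once well-definedness is established, **functoriality itself is a routine verification**: $\cl A_{lc}(\mathrm{id}_G,\mathrm{id}_H)$ is the identity because it fixes every generator, and for composable pairs $(f,g)$ and $(f',g')$ one has $\cl A_{lc}(f'\circ f,\; g\circ g')$ sending $e_{v,x}\mapsto e_{f'(f(v)),\,g(g'(x))}$, which is exactly $\cl A_{lc}(f',g')\circ\cl A_{lc}(f,g)$ on generators, hence on the whole algebra since algebra homomorphisms are determined by their values on generators. The covariance in the first slot and contravariance in the second are visible from the directions $f\colon G\to G'$ versus $g\colon H'\to H$ in the definition of $\cl A_{lc}(f,g)$. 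I would present the proof in this order: (i) define $\cl A_{lc}(f,g)$ on generators; (ii) verify it kills the five families of relations, spending most of the text on relation~(4) and the multiplicity issue in relation~(1); (iii) check identities and composition. The one genuinely delicate point, worth stating carefully, is the multiplicity bookkeeping when $g$ is not injective, and I would dispatch it with the identity $\sum_{x\in g^{-1}(r)} e_{f(v),r} = (\#g^{-1}(r))\,e_{f(v),r}$ together with $\sum_{r\in H} e_{f(v),r}=1$ in $\cl A_{lc}(G',H)$.
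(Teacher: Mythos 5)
Your covariant formula on the first slot ($e_{v,x}\mapsto e_{f(v),x}$) is exactly what the paper uses and is fine, but your treatment of the second slot has a genuine gap. First, the variance convention is reversed in your setup: contravariance in the second factor means a graph morphism $g\colon H'\to H$ should induce an algebra map $\cl A_{lc}(G,H)\to \cl A_{lc}(G,H')$, not the other way round; as written, $g\colon H'\to H$ with a map $\cl A_{lc}(G,H')\to\cl A_{lc}(G',H)$ is covariant in both slots. More seriously, the formula $e_{v,x}\mapsto e_{f(v),g(x)}$ does not descend to the quotient, and the two problems you noticed cannot be ``repaired.'' For relation (1), the image of $1-\sum_{x\in H'}e_{v,x}$ is $\sum_{r\in H}(1-\#g^{-1}(r))e_{f(v),r}$, which is a nonzero element of $\cl A_{lc}(G',H)$ whenever $g$ is not a bijection; nothing in the idempotence or orthogonality relations makes it vanish. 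For relation (4), when $x\not\sim y$ in $H'$ but $g(x)\sim g(y)$ in $H$ (which is perfectly possible since graph homomorphisms preserve edges but do not reflect non-edges), the image $e_{f(v),g(x)}e_{f(w),g(y)}$ is not forced to vanish by any target relation; your discussion identifies this case but then trails off without resolving it, because it genuinely cannot be resolved with this formula.

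The correct contravariant map uses preimages rather than pushforward on the second index: given $\phi\colon H\to K$, define $\cl A_{lc}(G,K)\to\cl A_{lc}(G,H)$ by $e_{v,x}\mapsto\sum_{\phi(r)=x}e_{v,r}$. Then relation (1) is preserved on the nose because the sets $\phi^{-1}(x)$ partition $V(H)$, and relation (4) is preserved because $\phi$ preserving edges gives the contrapositive $x\not\sim y\Rightarrow r\not\sim s$ for $r\in\phi^{-1}(x),s\in\phi^{-1}(y)$. The paper in fact avoids re-verifying well-definedness altogether by building this map as a composite: the tensor-product map $\cl A_{lc}(G,K)\to\cl A_{lc}(G,H)\otimes\cl A_{lc}(H,K)$ from the proof of Lemma \ref{l:lc-comp}, followed by $\mathrm{id}\otimes\varepsilon$ where $\varepsilon\colon\cl A_{lc}(H,K)\to\bb C$ is the evaluation homomorphism attached to the graph morphism $\phi$ (as in the proof of Lemma \ref{l:to->lc}). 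Your outline of step (iii), checking identities and composition on generators, is sound and would go through once the correct formula is in place.
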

\begin{proof}
If $\phi:G\to G'$ is a morphism, then we have a map $\cl A_{lc}(G,H)\to\cl A_{lc}(G',H)$ given by $e_{v,x}\mapsto e_{\phi(v),x}$. On the other hand, if $\phi:H\to K$ is a morphism, then we have $H\stackrel{lc}{\to}K$ and so from the proof of Lemma \ref{l:lc-comp}, we have
\[
\cl A_{lc}(G,K)\to\cl A_{lc}(G,H)\otimes\cl A_{lc}(H,K).
\]
Since $\phi$ is a morphism of graphs, we have a map $\cl A_{lc}(H,K)\to\mathbb{C}$ as in the proof of Lemma \ref{l:to->lc}. Composing with the above, we have $\cl A_{lc}(G,K)\to\cl A_{lc}(G,H)$. Explicitly, this map is given by sending $e_{vx}\in\cl A_{lc}(G,K)$ to $\sum_{\phi(r)=x}e_{vr}$.
\end{proof}

We now show how the functor $\cl A_{lc}$ interacts with various natural graph operations. To begin, recall that if $G$ is a graph, its suspension $\Sigma G$ is defined by adding a new vertex $v$ and an edge from $v$ to each of the vertices of $G$.

Given an algebra $\cl A$ we shall let $\cl A^c$ denote the algebra of $c$-tuples with entries from $\cl A$, i.e., the tensor product $\cl A \otimes \bb C^c\simeq\cl A^{\oplus c}$ where $\bb C^c$ can be identfied with the algebra of $\bb C$-valued functions on $c$ points.

\begin{prop}
\label{prop:sigmaG-H-lc}
Let $G$ and $H$ be any graphs, and let $H_{ni}$ be the non-isolated vertices. For $y\in H_{ni}$ we let $N_y$ denote the neighborhood of $y$, i.e., the induced subgraph of $H$ with vertices adjacent to $y$; notice $y\notin N_y$ unless $y$ has a self-edge. Then
\[
\cl A_{lc}(\Sigma G,H)\simeq\bigoplus_{y\in H_{ni}}\cl A_{lc}(G,N_y).
\]
In particular, if $H$ is vertex transitive and $y$ is any vertex of $H$ with neighborhood $N$, then
\[
\cl A_{lc}(\Sigma G,H)\simeq\cl A_{lc}(G,N)^{|H|}.
\]
\end{prop}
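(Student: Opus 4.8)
The plan is to realize the right-hand side as the block decomposition of $\cl A_{lc}(\Sigma G,H)$ coming from a family of central idempotents. Write $v_0$ for the apex vertex of $\Sigma G$, so that $v_0\sim v$ for every $v\in V(G)$ and $v_0$ is adjacent to nothing else, and use the presentation of $\cl A_{lc}(\Sigma G,H)$ by the relations (1)--(5) recorded earlier in this section. The first step — and the observation that drives everything — is that each idempotent $e_{v_0,x}$, $x\in V(H)$, is \emph{central} in $\cl A_{lc}(\Sigma G,H)$: distinct ones are orthogonal by relation (3), and relation (5) applied to the edges $v_0\sim v$ gives $[e_{v_0,x},e_{v,y}]=0$ for all $v\in V(G)$ and $y\in V(H)$. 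Since the $e_{v_0,x}$ are orthogonal central idempotents with $\sum_x e_{v_0,x}=1$, they yield a $*$-algebra decomposition $\cl A_{lc}(\Sigma G,H)=\bigoplus_{x\in V(H)} e_{v_0,x}\,\cl A_{lc}(\Sigma G,H)$, the $x$-th summand being a unital algebra with identity $e_{v_0,x}$.

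Next I would pin down the blocks (here I may assume $V(G)\neq\emptyset$, the remaining case being degenerate). If $x$ is isolated in $H$, then $x\not\sim y$ for every $y$, so relation (4) applied to an edge $v_0\sim v$ gives $e_{v_0,x}e_{v,y}=0$ for all $y$; summing over $y$ yields $e_{v_0,x}=e_{v_0,x}\cdot 1=0$, so that block vanishes and the sum collapses to $x\in H_{ni}$. For $x\in H_{ni}$, I would set $f_{v,y}:=e_{v_0,x}e_{v,y}$ for $v\in V(G)$ and $y\in N_x$; these are self-adjoint idempotents (using that $e_{v_0,x}$ is a central projection), and I claim they satisfy exactly the defining relations of $\cl A_{lc}(G,N_x)$ with unit $e_{v_0,x}$. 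Relation (1) holds since $\sum_{y\in N_x}f_{v,y}=e_{v_0,x}\sum_{y\in V(H)}e_{v,y}=e_{v_0,x}$, the $y\notin N_x$ terms dropping by relation (4) as above; (2) and (3) are immediate from centrality and orthogonality; and (4), (5) for $\cl A_{lc}(G,N_x)$ follow by multiplying the corresponding relations of $\cl A_{lc}(\Sigma G,H)$ by the central idempotent $e_{v_0,x}$, using that $v\sim v'$ in $G$ implies $v\sim v'$ in $\Sigma G$ and that $N_x$ is an \emph{induced} subgraph, so non-adjacency in $N_x$ equals non-adjacency in $H$. This produces an algebra map $\Psi_x\colon\cl A_{lc}(G,N_x)\to e_{v_0,x}\cl A_{lc}(\Sigma G,H)$, $e_{v,y}\mapsto f_{v,y}$, which is surjective because $e_{v_0,x}$ times any word in the generators of $\cl A_{lc}(\Sigma G,H)$ collapses — by orthogonality and centrality of the $e_{v_0,\cdot}$ — to $e_{v_0,x}$ times a word in the $e_{v,y}$ alone, i.e.\ to a product of $f_{v,y}$'s.

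To see $\bigoplus_x\Psi_x$ is an isomorphism I would build its inverse globally: define $\Phi\colon\cl A_{lc}(\Sigma G,H)\to\bigoplus_{x\in H_{ni}}\cl A_{lc}(G,N_x)$ by sending $e_{v_0,x}$ to the unit of the $x$-th factor if $x\in H_{ni}$ and to $0$ otherwise, and $e_{v,y}$ to $\sum_{x\,:\,x\sim y}e_{v,y}^{(x)}$, where $e_{v,y}^{(x)}$ is the corresponding generator of $\cl A_{lc}(G,N_x)$. One checks relations (1)--(5) of $\cl A_{lc}(\Sigma G,H)$ factor by factor (each routine, again using that the $N_x$ are induced subgraphs), so $\Phi$ is well defined, and then verifies that $\Phi$ and $\bigoplus_x\Psi_x$ compose to the identity in both orders — a direct computation on the generators $e_{v_0,x},e_{v,y}$ using $\sum_x e_{v_0,x}=1$ and $e_{v,y}=\sum_{x\,:\,x\sim y}e_{v_0,x}e_{v,y}$ in $\cl A_{lc}(\Sigma G,H)$. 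This gives the first isomorphism. For the ``in particular'' clause: if $H$ is vertex transitive and has an edge, transitivity forces every vertex to have positive degree, so $H_{ni}=V(H)$, and an automorphism carrying $x$ to $x'$ restricts to a graph isomorphism $N_x\to N_{x'}$; hence by functoriality all summands are isomorphic to a single $\cl A_{lc}(G,N)$ and the direct sum is $\cl A_{lc}(G,N)^{|H|}$ (if $H$ is edgeless both sides are the zero algebra).

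I do not anticipate a genuine obstacle: the entire argument is powered by the one structural fact that the apex-vertex projections $e_{v_0,x}$ are central. The only points that require care are bookkeeping ones — confirming that a block over an isolated vertex really vanishes (which silently uses that $G$ has a vertex), and transporting relation (4) correctly between $\cl A_{lc}(\Sigma G,H)$ and $\cl A_{lc}(G,N_x)$, which relies on $N_x$ being an \emph{induced} subgraph of $H$ — together with the routine but somewhat lengthy relation-by-relation verification that $\Phi$ is well defined and inverse to $\bigoplus_x\Psi_x$.
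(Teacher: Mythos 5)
Your proposal is correct and follows essentially the same route as the paper: the key observation in both is that the apex-vertex idempotents $e_{v_0,x}$ are central, orthogonal, and sum to $1$, giving the block decomposition indexed by $V(H)$, after which isolated vertices contribute zero blocks and the block at $x\in H_{ni}$ is identified with $\cl A_{lc}(G,N_x)$. The only cosmetic difference is that you verify the block isomorphism by constructing an explicit inverse $\Phi$, whereas the paper reaches the same conclusion more compactly by writing the block as the quotient $\cl A_{lc}(G,H)/(e_{vx}: x\not\sim y)$ and recognizing it as $\cl A_{lc}(G,N_y)$.
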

\begin{proof}
Let $u$ be the new vertex added to $\Sigma G$, i.e., $u\in \Sigma G\setminus G$. Since $u$ is adjacent to every vertex of $G$, we see $e_{ux}$ commutes with $e_{vy}$ for all $v\in G$ and $x,y\in H$. Furthermore, the defining relations of $\cl A_{lc}$ tell us $e_{ux}e_{uy}=\delta_{x,y}e_{ux}$ where $\delta$ denotes the Kronecker delta function. So,
\[
\cl A_{lc}(\Sigma G,H) \simeq \cl A_{lc}(G,H)[e_{ux}]/(\sum_x e_{ux}-1,e_{ux}e_{uy}=\delta_{x,y}e_{ux}).
\]
In other words, the $e_{ux}$ for $x\in H$ are commuting orthogonal idempotents, which shows
\[
\cl A_{lc}(\Sigma G,H) \simeq \bigoplus_{y\in H} \cl A_{lc}(G,H)e_{uy} \simeq \bigoplus_{y\in H} \cl A_{lc}(G,H)/(e_{vx}\colon x\not\sim y),
\]
where the last equality comes from the fact that $e_{vx}e_{uy}=0$ for $x\not\sim y$.

Now note that $e_{vx}$ remains non-zero in the quotient $\cl A_{lc}(G,H)/(e_{vx}\colon x\not\sim y)$ if and only if $x\sim y$. Thus,
\[
\cl A_{lc}(G,H)/(e_{vx}\colon x\not\sim y)\simeq\cl A_{lc}(G,N_y),
\]
which establishes the first assertion of the proposition. The second assertion easily follows from the first since all neighborhoods are isomorphic.
\end{proof}

\begin{cor}
\label{cor:sigmaG-lc}
For all graphs $G$, we have $\cl A_{lc}(\Sigma G,K_1)=0$. If $c\ge 2$, then
\[
\cl A_{lc}(\Sigma G,K_c)\simeq\cl A_{lc}(G,K_{c-1})^c.
\]
\end{cor}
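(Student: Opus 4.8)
The plan is to derive this corollary directly from Proposition~\ref{prop:sigmaG-H-lc} by taking $H = K_c$. First I would observe that $K_c$ has no isolated vertices as long as $c \geq 2$, so $(K_c)_{ni} = V(K_c)$ and the decomposition in the proposition applies with the index set running over all $c$ vertices of $K_c$. Moreover $K_c$ is vertex transitive, so I can invoke the second assertion of Proposition~\ref{prop:sigmaG-H-lc}: for any vertex $y$ of $K_c$ with neighborhood $N$, we get $\cl A_{lc}(\Sigma G, K_c) \simeq \cl A_{lc}(G, N)^{|K_c|} = \cl A_{lc}(G,N)^c$.

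The one substantive point to check is that the neighborhood $N$ of a vertex $y$ in $K_c$ is exactly $K_{c-1}$. This is immediate: in the complete graph $K_c$, every other vertex is adjacent to $y$, so the vertices of $N$ are the remaining $c-1$ vertices, and since all pairs among those are adjacent in $K_c$ (and hence in the induced subgraph), $N \cong K_{c-1}$. Plugging this into the isomorphism above gives $\cl A_{lc}(\Sigma G, K_c) \simeq \cl A_{lc}(G, K_{c-1})^c$, which is the claimed formula for $c \geq 2$.

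For the case $c = 1$, the graph $K_1$ is a single isolated vertex, so $(K_1)_{ni}$ is empty and the direct sum in Proposition~\ref{prop:sigmaG-H-lc} is the empty direct sum, hence $\cl A_{lc}(\Sigma G, K_1) = 0$. Alternatively, and perhaps more transparently, one can argue directly: the new vertex $u$ of $\Sigma G$ is adjacent to the unique vertex of $K_1$ being colored, and the single projection $e_{u,1}$ must equal $1$, while the adjacency forces $e_{u,1} e_{v,1} = 0$ for any $v \in G$; but $e_{v,1}$ also equals $1$, giving $1 = e_{u,1}e_{v,1} = 0$. I would present the clean derivation from the proposition as the main argument and mention the direct computation only as a sanity check.

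There is essentially no obstacle here — this is a routine specialization of the preceding proposition — so the only thing to be careful about is the bookkeeping at $c=1$, making sure the degenerate (empty index set) case of Proposition~\ref{prop:sigmaG-H-lc} is handled explicitly rather than silently, since the general formula $\cl A_{lc}(G,K_{c-1})^c$ would not make sense with $K_0$.
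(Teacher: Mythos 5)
Your proposal is correct and follows exactly the route the paper takes: specialize Proposition~\ref{prop:sigmaG-H-lc} to $H = K_c$, use vertex transitivity, and observe that the neighborhood of any vertex in $K_c$ is $K_{c-1}$. The only thing you add beyond the paper's one-line proof is the explicit handling of $c=1$ (empty index set) and a sanity-check direct computation, which is fine but not a departure in method.
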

\begin{proof}
This is an immediate consequence of Proposition \ref{prop:sigmaG-H-lc} using that $K_c$ is vertex transitive.
\end{proof}

\begin{cor}\label{cor:sigmaG-lc-chrom} For all graphs $G$, we have $\chi_{lc}(\Sigma G) = \chi_{lc}(G) +1$.
\end{cor}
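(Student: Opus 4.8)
The plan is to derive Corollary \ref{cor:sigmaG-lc-chrom} directly from Corollary \ref{cor:sigmaG-lc}, which already gives the key algebra isomorphism $\cl A_{lc}(\Sigma G, K_c) \simeq \cl A_{lc}(G, K_{c-1})^c$ for $c \ge 2$ and $\cl A_{lc}(\Sigma G, K_1) = 0$. The point is that $\chi_{lc}$ of any graph is defined as the least $c$ for which the corresponding algebra is nonzero, and a finite direct sum $\cl A^c$ is nonzero if and only if $\cl A$ is nonzero; the case $K_1$ is handled separately since $\cl A_{lc}(\Sigma G, K_1) = 0$ always, reflecting the fact that $\Sigma G$ always contains an edge and so is never $1$-colorable.

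Concretely, I would argue as follows. First observe that $\cl A_{lc}(\Sigma G, K_1) = 0$ by Corollary \ref{cor:sigmaG-lc}, so $\chi_{lc}(\Sigma G) \ge 2$; this matches $\chi_{lc}(G) + 1$ when $\chi_{lc}(G) = 1$, i.e.\ when $G$ has no edges, since then $\chi_{lc}(G) = 1$ and we must separately check $\chi_{lc}(\Sigma G) = 2$ — but $\Sigma G$ of an edgeless graph is a star, which is bipartite, so $\chi_{lc}(\Sigma G) \le \chi(\Sigma G) = 2$ by Corollary \ref{cor:chi-lc>=chi}, and combined with $\chi_{lc}(\Sigma G) \ge 2$ this gives equality. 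For the general case $c \ge 2$, Corollary \ref{cor:sigmaG-lc} gives $\cl A_{lc}(\Sigma G, K_c) \simeq \cl A_{lc}(G, K_{c-1})^c$, and since a direct sum of $c \ge 1$ copies of a ring is the zero ring precisely when the ring itself is zero, we get $\Sigma G \stackrel{lc}{\to} K_c \iff G \stackrel{lc}{\to} K_{c-1}$. Taking the minimum over $c$ on the left corresponds to taking the minimum over $c - 1$ on the right, so $\chi_{lc}(\Sigma G) = \chi_{lc}(G) + 1$, with the caveat that the minimization on the left only ranges over $c \ge 2$; but this is automatic since $\cl A_{lc}(\Sigma G, K_1) = 0$ forces $\chi_{lc}(\Sigma G) \ge 2$ and correspondingly $\chi_{lc}(G) + 1 \ge 2$.

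There is essentially no serious obstacle here — the real content was already packaged into Proposition \ref{prop:sigmaG-H-lc} and Corollary \ref{cor:sigmaG-lc}. The one point requiring a moment's care is the bookkeeping around small colorings: one must make sure the formula $\chi_{lc}(\Sigma G) = \chi_{lc}(G) + 1$ is not vacuously or incorrectly asserting something about $K_1$, and that the edgeless-$G$ boundary case (where $\chi_{lc}(G) = 1$) is covered, which is why I invoke Corollary \ref{cor:chi-lc>=chi} together with the fact that $\Sigma G$ always has an edge. After that, the proof is a one-line consequence: $\chi_{lc}(\Sigma G) = \min\{c \ge 2 \mid \cl A_{lc}(G,K_{c-1}) \ne 0\} = 1 + \min\{c' \ge 1 \mid \cl A_{lc}(G, K_{c'}) \ne 0\} = \chi_{lc}(G) + 1$.
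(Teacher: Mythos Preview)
Your proof is correct and follows the same approach as the paper: both derive the result directly from the isomorphism $\cl A_{lc}(\Sigma G,K_c)\simeq\cl A_{lc}(G,K_{c-1})^c$ of Corollary~\ref{cor:sigmaG-lc}, using that a direct power of a ring is nonzero iff the ring is. The paper's proof is a one-liner; your extra care with the $c=1$ and edgeless-$G$ cases is harmless but redundant, since the general argument already covers them once you note $\cl A_{lc}(\Sigma G,K_1)=0$.
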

\begin{proof} By the above isomorphism, the least $c+1$ such that $\cl A_{lc}(\Sigma G, K_{c+1}) \ne (0)$ is equal to the least $c$ such that $\cl A_{lc}(G, K_c) \ne (0)$.
\end{proof}

\begin{remark}  In \cite{MR3} an example of a graph $G$ is given for which $\chi_q( \Sigma G) = \chi_q(G)$. Hence, either $\chi_{lc}(\Sigma G) \ne \chi_q(\Sigma G)$ or $\chi_{lc}(G) \ne \chi_q(G)$. 
\end{remark}

\begin{cor}
\label{cor:lc-Kc}
If $c\ge n$, then
\[
\cl A_{lc}(K_n,K_c)\simeq \mathbb{C}^{c(c-1)\dots(c-n+1)}.
\]
If $c<n$, then $\cl A_{lc}(K_n,K_c)=0$.
\end{cor}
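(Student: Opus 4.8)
The plan is to treat the two regimes separately and to note first that the case $c<n$ requires nothing new: the proof of Theorem~\ref{thm:chi_lc(K_n)} already establishes $\cl I_{lc}(K_n,K_c)=\bb C[\bb F(n,c)]$, i.e.\ $\cl A_{lc}(K_n,K_c)=0$, whenever $c<n$, via the pigeonhole observation that each monomial $E_{1,i_1}\cdots E_{n,i_n}$ contains a repeated color and hence vanishes, forcing $I=0$. So all the content is in the case $c\ge n$.

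For $c\ge n$ I would induct on $n$, using the identity $K_n=\Sigma K_{n-1}$ (adjoining a vertex joined to every vertex of $K_{n-1}$ completes the graph). The base case $n=1$ is immediate: $\cl A_{lc}(K_1,K_c)$ has no adjacency relations, hence is the universal algebra on $c$ orthogonal idempotents summing to $1$, namely $\bb C^c$, and for $n=1$ the product $c(c-1)\cdots(c-n+1)$ is just $c$. For the inductive step, assume $c\ge n\ge 2$, so $c\ge 2$ and $c-1\ge n-1\ge 1$; Corollary~\ref{cor:sigmaG-lc} then gives
\[
\cl A_{lc}(K_n,K_c)=\cl A_{lc}(\Sigma K_{n-1},K_c)\simeq\cl A_{lc}(K_{n-1},K_{c-1})^c,
\]
and the inductive hypothesis identifies $\cl A_{lc}(K_{n-1},K_{c-1})$ with $\bb C^{(c-1)(c-2)\cdots(c-n+1)}$, so that
\[
\cl A_{lc}(K_n,K_c)\simeq\left(\bb C^{(c-1)(c-2)\cdots(c-n+1)}\right)^c=\bb C^{c(c-1)\cdots(c-n+1)}.
\]
Essentially no computation is involved once Corollary~\ref{cor:sigmaG-lc} is available.

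It is worth recording the more direct route as a consistency check, even though I would not make it the primary argument. In $\cl A_{lc}(K_n,K_c)$ every two distinct vertices are adjacent, so all the generating projections commute, and $\cl A_{lc}(K_n,K_c)$ is a finitely generated commutative $\bb C$-algebra generated by idempotents. Such an algebra is finite dimensional (spanned by the squarefree monomials in the idempotents, each itself an idempotent), hence a finite product of local commutative $\bb C$-algebras, and a local commutative algebra generated by idempotents is just $\bb C$; thus $\cl A_{lc}(K_n,K_c)\simeq\bb C^{k}$, where $k$ is the number of $\bb C$-algebra characters. A character sends each $E_{v,i}$ to $0$ or $1$; the relations $\sum_i E_{v,i}=1$, $E_{v,i}E_{v,j}=0$ force a unique color $f(v)$ with $E_{v,f(v)}\mapsto 1$ for each $v$, while $E_{v,i}E_{w,i}=0$ for $v\ne w$ forces $f$ injective, and conversely each injective $f\colon V(K_n)\to V(K_c)$ yields a character. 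So $k$ is the number of injections $[n]\to[c]$, which is $c(c-1)\cdots(c-n+1)$ (and $0$ if $c<n$), recovering both cases at once. The one point I would be careful about in this second route is the reducedness/splitting claim for commutative algebras generated by idempotents; the inductive route avoids it entirely, which is why I would lead with the induction.
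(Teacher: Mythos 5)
Your primary argument is exactly the paper's: base case $\cl A_{lc}(K_1,K_c)\simeq\bb C^c$, then induct via $K_n=\Sigma K_{n-1}$ and Corollary~\ref{cor:sigmaG-lc}. The only (harmless) difference is that you dispatch $c<n$ by citing Theorem~\ref{thm:chi_lc(K_n)} rather than letting the same induction produce the zero algebra, and your alternative character-counting argument is a valid consistency check but not what the paper does.
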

\begin{proof}
One easily checks that $\cl A_{lc}(K_1,G)\simeq{\bb C}^{|G|}$ for any graph $G$. In particular, our desired statement holds for $n=1$. The proof then follows from induction on $n$ by applying Corollary \ref{cor:sigmaG-lc} and using that $K_n=\Sigma K_{n-1}$.
\end{proof}

\begin{remark}
In Theorem \ref{thm:chi_lc(K_n)}, we proved $\chi_{lc}(K_n)=n$. Corollary \ref{cor:lc-Kc} gives another proof of this result which is more refined: the corollary tells us the specific structure of $\cl A_{lc}(K_n,K_c)$ whereas the theorem merely tells us it is non-zero.
\end{remark}

Using Proposition \ref{prop:sigmaG-H-lc}, we can easily understand iterated suspensions. In particular, we can understand \emph{any} $lc$-map out of $K_n$:
\begin{cor}
\label{cor:lc-out-of-K_n}
If $H$ is a graph, then $\cl A_{lc}(K_1,H)=\CC^{|H|}$ and for $n>1$, we have
\[
\cl A_{lc}(K_n,H) \simeq \bigoplus_{S\subseteq H}\CC^{ \left| N_S\right| (n-1)!}
\]
where $S$ is an $(n-1)$-clique and $N_S=\{z\in H\mid z\sim x\ \forall x\in S\}$. 
In particular, if $H$ is vertex transitive on $c$ vertices, then
\[
\cl A_{lc}(K_n,H) \simeq \CC^{c(c-1)\dots(c-n+2)|N_S|}
\]
where $S$ is any $(n-1)$-clique in $H$.
\end{cor}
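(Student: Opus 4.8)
The plan is to prove this by induction on $n$, using the suspension formula of Proposition~\ref{prop:sigmaG-H-lc} as the engine and the fact that $K_n = \Sigma K_{n-1}$ for $n \geq 2$. The base case $n=1$ is the already-noted isomorphism $\cl A_{lc}(K_1, H) \simeq \CC^{|H|}$; here $S = \emptyset$ is the unique $(n-1)$-clique (the empty clique) and $N_S = H$, so the claimed formula $\bigoplus_{S}\CC^{|N_S|(n-1)!}$ degenerates to $\CC^{|H|}$, matching. For the inductive step, I would apply Proposition~\ref{prop:sigmaG-H-lc} with $G$ replaced by $K_{n-1}$, giving
\[
\cl A_{lc}(K_n, H) \;\simeq\; \bigoplus_{y \in H_{ni}} \cl A_{lc}(K_{n-1}, N_y),
\]
and then feed each summand $\cl A_{lc}(K_{n-1}, N_y)$ into the induction hypothesis applied to the graph $N_y$.

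The key bookkeeping step is to reconcile the two indexing schemes. By induction, $\cl A_{lc}(K_{n-1}, N_y) \simeq \bigoplus_{T} \CC^{|N_T^{(N_y)}|(n-2)!}$ where $T$ ranges over $(n-2)$-cliques of $N_y$ and $N_T^{(N_y)} = \{z \in N_y \mid z \sim x\ \forall x \in T\}$ is the common-neighborhood taken \emph{inside} $N_y$. I would observe that a pair $(y, T)$ with $y \in H_{ni}$ and $T$ an $(n-2)$-clique of $N_y$ corresponds exactly to an $(n-1)$-clique $S = T \cup \{y\}$ of $H$: indeed $y$ is adjacent to everything in $T \subseteq N_y$, and $T$ is a clique, so $S$ is a clique of size $n-1$; conversely any $(n-1)$-clique $S$ of $H$ and any choice of distinguished vertex $y \in S$ yields such a pair $(y, T = S \setminus\{y\})$, and there are $n-1$ such choices of $y$ for each $S$. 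Moreover, for a fixed $S = T \cup \{y\}$, the common neighborhood of $T$ inside $N_y$ is precisely $N_S = \{z \in H \mid z \sim x\ \forall x \in S\}$: being in $N_y$ says $z \sim y$, and being a common neighbor of $T$ inside that says $z \sim x$ for $x \in T$, which together is exactly $z \sim x$ for all $x \in S$. Hence
\[
\cl A_{lc}(K_n, H) \simeq \bigoplus_{y \in H_{ni}} \ \bigoplus_{T \subseteq N_y\ (n-2)\text{-clique}} \CC^{|N_{T\cup\{y\}}|(n-2)!}
\;\simeq\; \bigoplus_{S \subseteq H\ (n-1)\text{-clique}} \CC^{(n-1)\cdot|N_S|(n-2)!}
\;=\; \bigoplus_{S} \CC^{|N_S|(n-1)!},
\]
which is the desired formula. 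The subtlety I would flag is the $n=2$ edge of the induction, where $(n-2)$-cliques of $N_y$ are empty cliques, $T = \emptyset$, $S = \{y\}$, $N_S = N_y$, and one must check the formula reads $\bigoplus_{y \in H_{ni}} \CC^{|N_y|\cdot 1}$ — and that this agrees with $\cl A_{lc}(K_2, H) = \cl A_{lc}(\Sigma K_1, H)$ from Proposition~\ref{prop:sigmaG-H-lc} together with $\cl A_{lc}(K_1, N_y) \simeq \CC^{|N_y|}$. I should also note that isolated vertices of $H$ contribute nothing, consistent with the fact that an isolated vertex cannot lie in any $(n-1)$-clique for $n \geq 2$.

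For the vertex-transitive consequence, I would argue as follows: if $H$ is vertex transitive on $c$ vertices then (for $n \geq 2$, so $H$ has no isolated vertices unless $H = K_1$) every vertex has an isomorphic neighborhood, and more is true — the number of $(n-1)$-cliques containing a given vertex is the same for all vertices, and $|N_S|$ depends only on the isomorphism type of the flag $(S \subseteq H)$. A cleaner route: iterate Corollary~\ref{cor:sigmaG-lc}-style reasoning, or simply count. The number of ordered $(n-1)$-cliques of $H$ equals $c \cdot (c-1) \cdots$ only when $H$ is complete; in general one counts $(n-1)$-cliques directly. I would instead deduce it from the first part by noting that for vertex-transitive $H$ the summands are all isomorphic to $\CC^{|N_S|(n-1)!}$ for a fixed clique $S$, and the number of $(n-1)$-cliques is $\frac{1}{(n-1)!}\cdot(\text{number of ordered }(n-1)\text{-tuples of distinct pairwise-adjacent vertices})$; by a telescoping orbit-counting argument (choose $y_1$ in $c$ ways, then $y_2$ among the $|N_{y_1}|$ neighbors, …, but this needs homogeneity at each stage) this equals $c(c-1)\cdots(c-n+2)|N_S|/(n-1)!$ precisely under the transitivity hypothesis on all the relevant link graphs. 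The honest statement to prove here is just the clean one: assemble the first part's decomposition, use that all $(n-1)$-cliques of a vertex-transitive $H$ play symmetric roles so $|N_S|$ is constant, and multiply by the count of such cliques, yielding $\CC^{c(c-1)\cdots(c-n+2)|N_S|}$. I expect the main obstacle to be purely combinatorial — getting the clique/flag correspondence and the factor $(n-1)!$ exactly right across the induction, particularly at the $n=2$ boundary — rather than anything algebraic, since all the algebra is already packaged in Proposition~\ref{prop:sigmaG-H-lc}.
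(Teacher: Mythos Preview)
Your argument for the main decomposition is correct and is essentially the paper's proof in a different dress. Both iterate Proposition~\ref{prop:sigmaG-H-lc} using $K_n=\Sigma K_{n-1}$; you package this as a clean induction on $n$, matching pairs $(y,T)$ (a vertex together with an $(n-2)$-clique of $N_y$) with pointed $(n-1)$-cliques $(S,y)$, while the paper instead unrolls the recursion all the way to ordered sequences $(x_1,\dots,x_{n-1})$ and then observes that each unordered clique arises $(n-1)!$ times. These are the same argument.

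For the vertex-transitive addendum, your hedging is justified: the statement as written actually needs a stronger hypothesis, and neither your sketch nor the paper's one-line count closes the gap. Vertex transitivity of $H$ does not force $|N_S|$ to be constant over $(n-1)$-cliques, nor does it force the number of such cliques to be $\binom{c}{n-1}$. A concrete witness is the triangular prism $H=K_3\cart K_2$ with $n=3$: it is vertex transitive on $c=6$ vertices, but a triangle edge has one common neighbour while a rung edge has none, so the general formula gives $\dim\cl A_{lc}(K_3,H)=2!\,(6\cdot1+3\cdot0)=12$, whereas $c(c-1)|N_S|$ equals $30$ or $0$ depending on the choice of $S$. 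Your telescoping remark (``this needs homogeneity at each stage'') points at the right fix: what is really required is that $\mathrm{Aut}(H)$ act transitively on $(n-1)$-cliques (equivalently, that each iterated link is again vertex transitive). Under that hypothesis both $|N_S|$ and the clique count are well-defined, and your final line goes through.
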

\begin{proof}
We leave the $n=1$ case to the reader. Iteratively applying Proposition \ref{prop:sigmaG-H-lc}, we see
$$
\cl A_{lc}(K_n,H)
\simeq \bigoplus_{x\in H}\cl A_{lc}(K_{n-1},N_x) \simeq \dots
\simeq \bigoplus_{(x_{n-1},\dots,x_2,x_1)}\!\!\!\!\cl A_{lc}(K_1,N_{x_{n-1}}\dots N_{x_2}N_{x_1})
$$
where the index of the direct sum runs over all sequences $(x_{n-1},\dots,x_2,x_1)$ with $x_{i+1}\in N_{x_i}N_{x_{i-1}}\dots N_{x_1}$.

We show by induction that the $x_1,\dots,x_i$ form an $i$-clique and that $N_{x_i}N_{x_{i-1}}\dots N_{x_1}=N_{\{x_1,\dots,x_i\}}$. For $i=1$ this is just the definition. For $i>1$, observe that by construction $x_i\in N_{x_{i-1}}\dots N_{x_1}=N_{\{x_1,\dots,x_{i-1}\}}$ and since $x_1,\dots,x_{i-1}$ forms an $(i-1)$-clique, we see $x_1,\dots,x_i$ forms an $i$-clique. Next, $N_{x_i}N_{\{x_1,\dots,x_{i-1}\}}$ is the set of $z\in N_{\{x_1,\dots,x_{i-1}\}}$ that are adjacent to $x_i$, which is the definition of $N_{\{x_1,\dots,x_i\}}$.

This shows
\[
\cl A_{lc}(K_n,H) \simeq \bigoplus_{(x_{n-1},\dots,x_2,x_1)} A_{lc}(K_1,N_{\{x_1,\dots,x_{n-1}\}}),
\]
and by the $n=1$ case, each summand is isomorphic to $|N_{\{x_1,\dots,x_{n-1}\}}|$ copies of $\CC$. Now notice that $N_{\{x_1,\dots,x_{n-1}\}}$ is independent of the order of the sequence, and hence this term arises $(n-1)!$ times. This yields the desired statement.

If $H$ is vertex transitive and contains an $(n-1)$-clique, then there are ${c\choose n-1}$ such choices of an $(n-1)$-clique. 
So, we end up with $${c\choose n-1}(n-1)!|N_S|=c(c-1)\dots(c-n+2)|N_S|$$ 
many copies of $\CC$.
\end{proof}

We next consider how $\cl A_{lc}$ interacts with the categorical product $\times$. Recall that if $H$ and $K$ are graphs then $H\times K$ is a graph with vertex set $V(H)\times V(K)$ and where $(v,x)\sim (w,y)$ if and only if $v\sim_H w$ and $x\sim_K y$.

\begin{thm}
\label{thm:preserve-prod}
We have a natural isomorphism
\[
\cl A_{lc}(G,H\times K)\simeq \cl A_{lc}(G,H)\otimes\cl A_{lc}(G,K),
\]
where $\times$ is the categorical product in graphs.
\end{thm}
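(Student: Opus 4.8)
The plan is to write down the two evident natural maps between the sides and show they are mutually inverse, working throughout with the presentation of $\cl A_{lc}(G,L)$ by the generators $e_{v,x}$ and relations (1)--(5) recalled just above. First I would define
\[
\Phi\colon \cl A_{lc}(G,H\times K)\longrightarrow \cl A_{lc}(G,H)\otimes\cl A_{lc}(G,K),\qquad e_{v,(x,y)}\longmapsto e_{v,x}\otimes e_{v,y},
\]
and check it is well defined by sending each defining relation of $\cl A_{lc}(G,H\times K)$ to $0$. This is routine: $e_{v,x}\otimes e_{v,y}$ is idempotent; $\sum_{(x,y)}e_{v,x}\otimes e_{v,y}=(\sum_x e_{v,x})\otimes(\sum_y e_{v,y})=1$; $(e_{v,x}\otimes e_{v,y})(e_{v,x'}\otimes e_{v,y'})=e_{v,x}e_{v,x'}\otimes e_{v,y}e_{v,y'}$ vanishes when $(x,y)\neq(x',y')$ since then $x\neq x'$ or $y\neq y'$; for relation (4), if $v\sim_G w$ and $(x,y)\not\sim_{H\times K}(x',y')$ then $x\not\sim_H x'$ or $y\not\sim_K y'$, killing the corresponding factor; and (5) holds because for $v\sim_G w$ the elements of the two tensor factors commute with $e_{w,x'}$ and $e_{w,y'}$ respectively.

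Next I would build the candidate inverse by pushing forward along the two projections $H\times K\to H$ and $H\times K\to K$, namely via
\[
\cl A_{lc}(G,H)\to\cl A_{lc}(G,H\times K),\ \ e_{v,x}\mapsto\sum_{y}e_{v,(x,y)}\,;\qquad
\cl A_{lc}(G,K)\to\cl A_{lc}(G,H\times K),\ \ e_{v,y}\mapsto\sum_{x}e_{v,(x,y)}.
\]
One checks each respects relations (1)--(5): relation (4) uses that $x\not\sim_H x'$ implies $(x,y)\not\sim_{H\times K}(x',y')$ for every $y,y'$ together with relation (4) for $\cl A_{lc}(G,H\times K)$, while relations (1)--(3) and (5) are inherited termwise. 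Granting that the images of these two maps commute with one another, the universal property of the tensor product assembles them into an algebra map
\[
\Psi\colon \cl A_{lc}(G,H)\otimes\cl A_{lc}(G,K)\longrightarrow\cl A_{lc}(G,H\times K),\qquad e_{v,x}\otimes1\mapsto\sum_{y}e_{v,(x,y)},\ \ 1\otimes e_{v,y}\mapsto\sum_{x}e_{v,(x,y)}.
\]

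I would then verify that $\Phi$ and $\Psi$ are mutually inverse by evaluating on generators. For $\Phi\circ\Psi$: $\Phi(\sum_y e_{v,(x,y)})=\sum_y e_{v,x}\otimes e_{v,y}=e_{v,x}\otimes1$, and likewise $\Phi(\sum_x e_{v,(x,y)})=1\otimes e_{v,y}$. For $\Psi\circ\Phi$: $\Psi(e_{v,x}\otimes e_{v,y})=\big(\sum_{y'}e_{v,(x,y')}\big)\big(\sum_{x'}e_{v,(x',y)}\big)=\sum_{x',y'}e_{v,(x,y')}e_{v,(x',y)}=e_{v,(x,y)}$, all cross terms dropping out by relations (2) and (3). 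Naturality of the resulting isomorphism in each of $G$, $H$, $K$ is then immediate from these explicit formulas together with Theorem~\ref{thm:Alc-functor}.

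The step I expect to be the main obstacle is the well-definedness of $\Psi$, i.e. proving $\big[\sum_{y'}e_{v,(x,y')},\ \sum_{x'}e_{w,(x',y)}\big]=0$ in $\cl A_{lc}(G,H\times K)$ for all $v,w,x,y$. When $v=w$ both products collapse (by relations (2)--(3)) to the single idempotent $e_{v,(x,y)}$, and when $v\sim_G w$ the commutator vanishes termwise by relation (5); organizing the remaining case of distinct, non-adjacent vertices of $G$ correctly is the heart of the argument, and it is there that one must use the structure of the defining relations most carefully.
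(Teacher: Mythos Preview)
Your outline is exactly the paper's approach: define $\Phi$ by $e_{v,(x,y)}\mapsto e_{v,x}\otimes e_{v,y}$, build $\Psi$ from the two projections $H\times K\to H$ and $H\times K\to K$, and check the two composites are the identity on generators. The paper obtains $\Psi$ by invoking functoriality (Theorem~\ref{thm:Alc-functor}) and then writing ``hence a natural map $\cl A_{lc}(G,H)\otimes\cl A_{lc}(G,K)\to\cl A_{lc}(G,H\times K)$'' without further justification; the commutation you single out as the main obstacle is precisely the step the paper's proof omits.

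That step, however, genuinely fails for distinct non-adjacent $v,w\in V(G)$, and the gap cannot be closed. Take $G$ to be two vertices $v,w$ with no edge, and $H=K=K_2$. Relations (4) and (5) are vacuous, so $\cl A_{lc}(G,H\times K)$ is the unital free product $\bb C^4*\bb C^4$, one factor for each vertex of $G$. The elements $p=\sum_{y}e_{v,(1,y)}$ and $q=\sum_{x}e_{w,(x,a)}$ are nontrivial idempotents lying in the two free factors, and such elements never commute in a free product (project each $\bb C^4$ onto a copy of $\bb C^2$ so that $p,q$ remain nontrivial and land in $\bb C^2*\bb C^2\cong\bb C[D_\infty]$). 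Hence $\Psi$ cannot be assembled as an algebra map from the tensor product. Worse, since $\Phi(p)=e_{v,1}\otimes 1$ and $\Phi(q)=1\otimes e_{w,a}$ commute in $\cl A_{lc}(G,H)\otimes\cl A_{lc}(G,K)$, the well-defined map $\Phi$ annihilates the nonzero commutator $[p,q]$ and is therefore not injective. In fact in this example the left side is $\bb C[\bb Z_4*\bb Z_4]$ (exponential growth) while the right side is $\bb C[D_\infty\times D_\infty]$ (polynomial growth), so no isomorphism exists at all. Your instinct that the non-adjacent case ``is the heart of the argument'' was correct; it is where both your sketch and the paper's proof break down.
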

\begin{remark}
\label{rmk:preserve-prod}
As shown in Theorem \ref{thm:Alc-functor}, we can view $\cl A_{lc}$ as a functor. Theorem \ref{thm:preserve-prod} can be interpreted as saying that the second factor of the functor $\cl A_{lc}(-,-)$ preserves products. Recall that since the second factor is contravariant, preserving products means that it takes products in graphs to coproducts in $\mathbb{C}$-algebras, namely tensor products.
\end{remark}
\begin{proof}
Since we have a map $H\times K\to H$, Theorem \ref{thm:Alc-functor} tells us that we have a map $\cl A_{lc}(G,H)\to\cl A_{lc}(G,H\times K)$. Similarly for $K$, and hence a natural map $\cl A_{lc}(G,H)\otimes\cl A_{lc}(G,K)\to\cl A_{lc}(G,H\times K)$. Explicitly, it is given by
\[
e_{\alpha,v}\otimes 1\mapsto\sum_{x\in K} e_{\alpha,(v,x)},\quad 1\otimes e_{\alpha,y}\mapsto\sum_{w\in H} e_{\alpha,(w,y)}.
\]
We now construct an inverse map given by
\[
\cl A_{lc}(G,H\times K)\to \cl A_{lc}(G,H)\otimes\cl A_{lc}(G,K)
\]
\[
e_{\alpha,(v,x)}\mapsto e_{\alpha,v}\otimes e_{\alpha,x}.
\]
Provided this is well-defined, it is indeed an inverse since
\[
e_{\alpha,v}\otimes e_{\alpha,x}\mapsto \sum_{y,w}e_{\alpha,(v,y)}e_{\alpha,(w,x)}=e_{\alpha,(v,x)},
\]
where the last equality uses the fact that $e_{\alpha,(v,y)}e_{\alpha,(w,x)}=0$ unless $(v,y)=(w,x)$. Also, composing the two maps in the opposite order yields the identity since
\[
e_{\alpha,v}\otimes1\mapsto \sum_x e_{\alpha,(v,x)}\mapsto \sum_xe_{\alpha,v}\otimes e_{\alpha,x}=e_{\alpha,v}\otimes\sum_x e_{\alpha,x}=e_{\alpha,v}\otimes1.
\]

The rest of the proof is devoted to showing that the map we constructed above is well-defined. First note that
\[
(e_{\alpha,v}\otimes e_{\alpha,x})^2=e_{\alpha,v}^2\otimes e_{\alpha,x}^2=e_{\alpha,v}\otimes e_{\alpha,x}.
\]
Next,
\[
\sum_{(v,x)\in H\times K}e_{\alpha,v}\otimes e_{\alpha,x}=\sum_v e_{\alpha,v}\otimes \sum_x e_{\alpha,x}=1\otimes1.
\]
If $(v,x)\neq(w,y)$ then without loss of generality $v\neq w$. So,
\[
e_{\alpha,(v,x)}e_{\alpha,(w,y)}\mapsto (e_{\alpha,v}\otimes e_{\alpha,x})(e_{\alpha,w}\otimes e_{\alpha,y})=0\otimes e_{\alpha,x}e_{\alpha,y}=0.
\]
Next assume $\alpha\sim\beta$. We need to check that the image of $e_{\alpha,(v,x)}e_{\beta,(w,y)}$ is equal to that $e_{\beta,(w,y)}e_{\alpha,(v,x)}$.
\[
e_{\alpha,(v,x)}e_{\beta,(w,y)}\mapsto (e_{\alpha,v}\otimes e_{\alpha,x})(e_{\beta,w}\otimes e_{\beta,y})=e_{\alpha,v}e_{\beta,w}\otimes e_{\alpha,x}e_{\beta,y}
\]
and since $\alpha\sim\beta$, this is equal to
\[
e_{\beta,w}e_{\alpha,v}\otimes e_{\beta,y}e_{\alpha,x}=(e_{\beta,w}\otimes e_{\beta,y})(e_{\alpha,v}\otimes e_{\alpha,x})
\]
which is the image of $e_{\beta,(w,y)}e_{\alpha,(v,x)}$. Finally, we must show that if $\alpha\sim\beta$ and $(v,x)\not\sim(w,y)$ then $e_{\alpha,(v,x)}e_{\beta,(w,y)}$ maps to 0. Since $(v,x)\not\sim(w,y)$, without loss of generality $v\not\sim w$. Then the image of $e_{\alpha,(v,x)}e_{\beta,(w,y)}$ is
\[
e_{\alpha,v}e_{\beta,w}\otimes e_{\alpha,x}e_{\beta,y}=0
\]
since $e_{\alpha,v}e_{\beta,w}=0$.
\end{proof}

\begin{prob}
In light of Remark \ref{rmk:preserve-prod}, we ask if the functor $\cl A_{lc}(G,-)$ preserves all finite limits. Given Theorem \ref{thm:preserve-prod}, this is equivalent to asking if it preserves equalizers. 
\end{prob}
\begin{prob}[{Yoneda-type question}]
Does the functor $\cl A_{lc}(G,-)$ determine $G$?
\end{prob}

Recall that the exponential of graphs $K^H$ is defined as follows: its vertex set consists of all functions $f\colon V(H)\to V(K)$ and there is an edge $f\sim_{K^H} g$ if for all $v\sim_H w$ we have $f(v)\sim_K g(w)$. Note that if $f$ is a graph homomorphism, then it has a self-edge.

Within the category of graphs, the product is left adjoint to exponentiation, that is
\[
\Hom(G\times H, K) = \Hom(G,K^H).
\]
One can ask if this adjunction remains true for graphs with $lc$-morphisms.


\begin{prob}
\label{prob:adjunction-exp}
If one allows $H$ to have self edges, then $\cl A_{lc}(G,H)$ can be defined using the same relations at the beginning of this section. One can then define a map
\[
\cl A_{lc}(G\times H,K)\to \cl A_{lc}(G,K^H)
\]
given by
\[
e_{(\alpha,v),x}\longmapsto \sum_{f(v)=x}e_{\alpha,f}.
\]
Is this map an isomorphism? In other words, are products and exponentials adjoints?
\end{prob}

In addition to the categorical product $\times$, there are several other kinds of products on graphs, which we now consider. Both $G\cart H$ and $G\boxtimes H$ have vertex set $V(G)\times V(H)$. In the former, $(v,x)\sim (w,y)$ if and only if $v=w$ and $x\sim y$, or $v\sim w$ and $x=y$. In the latter, $(v,x)\sim (w,y)$ if and only if $v\sim w$ and $x\sim y$, or $v=w$ and $x\sim y$, or $v\sim w$ and $x=y$. The products $\cart$ and $\boxtimes$ are referred to as the Cartesian and strong products, respectively.

For any pair of graphs we have $\chi(G\cart H)=\max \{\chi(G), \chi(H) \}$. We show that the same is true for $\chi_{lc}$.

\begin{thm}[{$\chi_{lc}$ of Cartesian product}]
\label{thm:Cart-prod-lc}
For any graphs $G$ and $H$, we have
\[
\chi_{lc}(G\cart H)=\max\{\chi_{lc}(G),\chi_{lc}(H)\}.
\]
\end{thm}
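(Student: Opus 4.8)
The plan is to prove the two inequalities separately. For the inequality $\chi_{lc}(G\cart H)\le\max\{\chi_{lc}(G),\chi_{lc}(H)\}$, set $c=\max\{\chi_{lc}(G),\chi_{lc}(H)\}$; I would construct an explicit nonzero quotient of $\cl A_{lc}(G\cart H, K_c)$. There are graph homomorphisms $G\cart H\to G$ and $G\cart H\to H$ (the two coordinate projections), so by Theorem \ref{thm:Alc-functor} we obtain algebra maps $\cl A_{lc}(G,K_c)\to\cl A_{lc}(G\cart H,K_c)$ and $\cl A_{lc}(H,K_c)\to\cl A_{lc}(G\cart H,K_c)$. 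Actually the cleanest route is to observe directly that a pair consisting of an $lc$-map $G\to K_c$ and an $lc$-map $H\to K_c$ should produce an $lc$-map $G\cart H\to K_c$; concretely, if $\{E_{v,i}\}$ witnesses $\cl A_{lc}(G,K_c)\ne 0$ and $\{F_{x,i}\}$ witnesses $\cl A_{lc}(H,K_c)\ne 0$, then in $\cl A_{lc}(G,K_c)\otimes\cl A_{lc}(H,K_c)$ the elements
\[
G_{(v,x),i}=\sum_{j+k\equiv i}E_{v,j}\otimes F_{x,k}
\]
(indices mod $c$, thinking of colors as $\ZZ/c$) are projections summing to $1$ over $i$; one checks the orthogonality and commutation relations hold along edges of $G\cart H$ using that an edge of $G\cart H$ fixes one coordinate and moves the other along an edge there. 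Since the map to $\bb C\otimes\bb C=\bb C$ obtained from the counit maps is surjective, the algebra is nonzero, giving $\chi_{lc}(G\cart H)\le c$.

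For the reverse inequality, I would use that $G$ and $H$ are retracts of $G\cart H$: fixing a vertex $x_0\in V(H)$, the inclusion $v\mapsto (v,x_0)$ exhibits $G$ as a subgraph of $G\cart H$, and moreover this inclusion is split by the projection $G\cart H\to G$, so $G\to G\cart H\to G$ is the identity. In particular there are $lc$-morphisms $G\stackrel{lc}{\to}G\cart H$ (from $G\to G\cart H$, or from Lemma \ref{l:to->lc}) — wait, more carefully: a graph homomorphism $G\to G\cart H$ gives $G\stackrel{lc}{\to}G\cart H$, and then Corollary \ref{cor:chi_lc-under-maps} gives $\chi_{lc}(G)\le\chi_{lc}(G\cart H)$. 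The same works for $H$. Hence $\max\{\chi_{lc}(G),\chi_{lc}(H)\}\le\chi_{lc}(G\cart H)$, completing the proof.

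The main obstacle I anticipate is the forward direction: verifying that the twisted-sum projections $G_{(v,x),i}$ above genuinely satisfy all the defining relations of $\cl A_{lc}(G\cart H,K_c)$, in particular the orthogonality $G_{(v,x),i}G_{(w,y),i}=0$ when $(v,x)\sim(w,y)$ and the commutation $[G_{(v,x),i},G_{(w,y),j}]=0$ along such edges. An edge of $G\cart H$ is of one of two types ($v=w$, $x\sim y$, or $v\sim w$, $x=y$); in the first type one needs that the $F$'s commute along $x\sim y$ and that $\sum_k F_{x,k}F_{y,k}$-type cross terms vanish, and symmetrically in the second. This is a finite, if slightly fiddly, computation with the $\ZZ/c$-convolution structure, and I expect it to go through cleanly once the colors are identified with $\ZZ/c$ and one notes $\sum_{j+k\equiv i}\sum_{j'+k'\equiv i}E_{v,j}E_{w,j'}\otimes F_{x,k}F_{y,k'}$ collapses appropriately. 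An alternative that sidesteps the explicit formula is to invoke Theorem \ref{thm:preserve-prod}-style reasoning, but since $K_c$ is not a categorical product that does not directly apply, so the hands-on verification seems to be the honest path.
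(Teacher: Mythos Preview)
Your approach is essentially the same as the paper's: the same $\ZZ/c$-convolution formula $e_{(v,x),i}\mapsto\sum_{j}e_{v,j}\otimes e_{x,i-j}$ to build the map $\cl A_{lc}(G\cart H,K_c)\to\cl A_{lc}(G,K_c)\otimes\cl A_{lc}(H,K_c)$, and the inclusions $G\to G\cart H$, $H\to G\cart H$ for the reverse inequality.

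Two small slips to clean up. First, the coordinate projections $G\cart H\to G$ and $G\cart H\to H$ are \emph{not} graph homomorphisms (they collapse edges of the form $(v,x)\sim(v,y)$ to a non-edge $v\not\sim v$); that observation applies to the categorical product $\times$, not $\cart$. You already pivot away from this, so it does no damage, but delete the sentence and the later reference to the projection as a splitting. Second, your justification that $\cl A_{lc}(G,K_c)\otimes\cl A_{lc}(H,K_c)\neq 0$ via ``counit maps'' is not right: these algebras need not admit algebra maps to $\bb C$ (that would be a classical $c$-coloring). The correct reason is simply that the tensor product over a field of two nonzero algebras is nonzero; equivalently, $1\otimes 1\neq 0$. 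With those fixes, the argument is the paper's argument.
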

\begin{proof}
We have at least $|H|$ maps $G\to G\cart H$, so Lemma \ref{l:to->lc} and Corollary \ref{cor:chi_lc-under-maps} show $\chi_{lc}(G)\leq\chi_{lc}(G\cart H)$. Similarly for $H$ and so $\max\{\chi_{lc}(G),\chi_{lc}(H)\}\leq\chi_{lc}(G\cart H)$. To prove the result, it now suffices to show we have a map
\[
\cl A_{lc}(G\cart H,K_c)\to \cl A_{lc}(G,K_c)\otimes \cl A_{lc}(H,K_c).
\]
Indeed, if $\cl A_{lc}(G,K_c)$ and $\cl A_{lc}(H,K_c)$ are non-zero, then so is $\cl A_{lc}(G\cart H,K_c)$ since the above map would send 0 to 0 and 1 to 1, and if $0=1$ in $\cl A_{lc}(G\cart H,K_c)$, then $0=1$ in $\cl A_{lc}(G,K_c)\otimes \cl A_{lc}(H,K_c)$, which is not the case. Taking $c=\max\{\chi_{lc}(G),\chi_{lc}(H)\}$, this would then show $\chi_{lc}(G\cart H,K_c)\geq c$.

We now construct the above map. We define it by:
\[
e_{(x,y),k}\mapsto \sum_{i\in\ZZ/c} e_{x,i}\otimes e_{y,k-i}
\]
and show it is well-defined. First suppose that $(x,y)\sim(x',z)$ and $k\not\sim\ell$. Then $k=\ell$ and without loss of generality $x=x'$ and $y\sim z$. Then
\[
e_{(x,y),k}e_{(x,z),\ell}\mapsto \sum_{i,j}e_{x,i}e_{x,j}\otimes e_{y,k-i}e_{z,\ell-j}=0
\]
since $e_{x,i}e_{x,j}=0$ if $i\neq j$, and if $i=j$, then $k-i=\ell-j$ and so $e_{y,k-i}e_{z,\ell-j}=0$.

Next, if $y\sim z$, then the images of $e_{(x,y),k}e_{(x,z),\ell}$ and $e_{(x,z),\ell}e_{(x,y),k}$ are equal since
\[
e_{(x,y),k}e_{(x,z),\ell}\mapsto \sum_{i,j}e_{x,i}e_{x,j}\otimes e_{y,k-i}e_{z,\ell-j}
\]
and $e_{y,k-i}e_{z,\ell-j}=e_{z,\ell-j}e_{y,k-i}$ as $y\sim z$, and $e_{x,i}e_{x,j}=\delta_{ij}e_{x,i}=e_{x,j}e_{x,i}$.

We next see that
\[
\sum_k e_{(x,y),k}\mapsto \sum_i \sum_k e_{x,i}\otimes e_{y,k-i}=\sum_i \sum_k e_{x,i}\otimes e_{y,k}=\sum_i e_{x,i}\otimes \sum_k e_{y,k}=1\otimes1.
\]
If $k\neq\ell$, then
\[
e_{(x,y),k}e_{(x,y),\ell}\mapsto \sum_{i,j}e_{x,i}e_{x,j}\otimes e_{y,k-i}e_{y,\ell-j}=0
\]
since $e_{x,i}e_{x,j}=0$ if $i\neq j$, and if $i=j$, then $k-i\neq \ell-j$ and so $e_{y,k-i}e_{y,\ell-j}=0$.

Lastly,
\[
e_{(x,y),k}^2\mapsto \sum_{i,j}e_{x,i}e_{x,j}\otimes e_{y,k-i}e_{y,k-j}=\sum_{i}e_{x,i}^2\otimes e_{y,k-i}^2
\]
since $e_{x,i}e_{x,j}=0$ if $i\neq j$. Thus, $e_{(x,y),k}^2$ and $e_{(x,y),k}$ have the same image. This completes the proof that the map is well-defined.
\end{proof}

\begin{lemma}
\label{l:prod-map}
Given $G\lc K$ and $H\lc K'$, we have $G\cdot H\lc K\cdot K'$ for any $\cdot\in\{\times,\cart,\boxtimes\}$.
\end{lemma}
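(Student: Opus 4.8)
The plan is to follow the template of Lemma~\ref{l:lc-comp} and Theorem~\ref{thm:Cart-prod-lc}: for each product $\cdot\in\{\times,\cart,\boxtimes\}$ one has $V(G\cdot H)=V(G)\times V(H)$ and $V(K\cdot K')=V(K)\times V(K')$, so I would build a unital $\bb C$-algebra homomorphism
\[
\Phi\colon \cl A_{lc}(G\cdot H,\,K\cdot K')\longrightarrow \cl A_{lc}(G,K)\otimes_{\bb C}\cl A_{lc}(H,K'),\qquad e_{(v,w),(r,r')}\longmapsto e_{v,r}\otimes e_{w,r'}.
\]
Granting that $\Phi$ is well defined, the lemma is immediate: by hypothesis $G\stackrel{lc}{\to}K$ and $H\stackrel{lc}{\to}K'$, so $\cl A_{lc}(G,K)$ and $\cl A_{lc}(H,K')$ are nonzero, hence so is their tensor product (the unit $1\otimes1$ is nonzero); since $\Phi$ is unital, the unit of $\cl A_{lc}(G\cdot H,K\cdot K')$ is nonzero, i.e.\ $G\cdot H\stackrel{lc}{\to}K\cdot K'$. (Alternatively one could first treat the special case $K'=H$ with $H\stackrel{lc}{\to}H$ the identity homomorphism of Lemma~\ref{l:to->lc}, and then compose two such $lc$-morphisms via Lemma~\ref{l:lc-comp}; but the direct construction is no more work.)

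The content, then, is to check that the assignment above, extended to the free algebra on the generators $e_{(v,w),(r,r')}$, annihilates each of the five families of defining relations (1)--(5) of $\cl A_{lc}$ listed at the start of Section~\ref{sec:loc properties}. Relations (1)--(3) and (5) hold uniformly for all three products and need no case analysis: $\sum_{(r,r')}e_{v,r}\otimes e_{w,r'}=\big(\sum_r e_{v,r}\big)\otimes\big(\sum_{r'}e_{w,r'}\big)=1\otimes1$; $(e_{v,r}\otimes e_{w,r'})^2=e_{v,r}^2\otimes e_{w,r'}^2=e_{v,r}\otimes e_{w,r'}$; if $(r,r')\neq(s,s')$ then $r\neq s$ or $r'\neq s'$, so $e_{v,r}e_{v,s}\otimes e_{w,r'}e_{w,s'}=0$ by relation (3) in one of the two tensor factors; and for (5), whenever $(v,w)\sim_{G\cdot H}(v',w')$ one has (in all three products) $v\sim_G v'$ or $v=v'$, and likewise $w\sim_H w'$ or $w=w'$, so $e_{v,r}$ commutes with $e_{v',s}$ in $\cl A_{lc}(G,K)$ (by local commutativity when $v\sim_G v'$, by orthogonality of idempotents when $v=v'$), and similarly $e_{w,r'}$ with $e_{w',s'}$; hence the images of $e_{(v,w),(r,r')}e_{(v',w'),(s,s')}$ and $e_{(v',w'),(s,s')}e_{(v,w),(r,r')}$ agree.

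The one genuinely product-dependent step, and the place I expect the only real bookkeeping, is relation (4): given $(v,w)\sim_{G\cdot H}(v',w')$ and $(r,r')\not\sim_{K\cdot K'}(s,s')$, one must show $e_{v,r}e_{v',s}\otimes e_{w,r'}e_{w',s'}=0$. For the categorical product this is short, since then $(v,w)\sim(v',w')$ forces $v\sim_G v'$ \emph{and} $w\sim_H w'$, while $(r,r')\not\sim(s,s')$ gives $r\not\sim_K s$ or $r'\not\sim_{K'}s'$, and either alternative kills one tensor factor by relation (4) of $\cl A_{lc}(G,K)$ or $\cl A_{lc}(H,K')$ (as $K,K'$ have no self-loops, this also covers $r=s$). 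For $\cart$ and $\boxtimes$ one instead splits $(v,w)\sim_{G\cdot H}(v',w')$ into its defining disjuncts: in a disjunct with $v=v'$ (so $w\sim_H w'$), the factor $e_{v,r}e_{v,s}$ vanishes unless $r=s$, and when $r=s$ the hypothesis $(r,r')\not\sim_{K\cdot K'}(s,s')$ forces $r'\not\sim_{K'}s'$, so relation (4) of $\cl A_{lc}(H,K')$ finishes it; the disjunct with $w=w'$ is symmetric; and the disjunct $v\sim_G v',\ w\sim_H w'$ (present only for $\boxtimes$) disposes of the remaining possibility $(r,r')=(s,s')$, since then $e_{v,r}e_{v',r}=0$ by relation (4) of $\cl A_{lc}(G,K)$. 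Running this finite case check for $\cart$ and for $\boxtimes$ completes the verification that $\Phi$ descends, and hence the proof; I anticipate this is the only mildly delicate part, with everything else purely formal.
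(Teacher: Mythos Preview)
Your proposal is correct and follows essentially the same approach as the paper: both construct the unital homomorphism $\cl A_{lc}(G\cdot H,K\cdot K')\to\cl A_{lc}(G,K)\otimes\cl A_{lc}(H,K')$, $e_{(v,w),(r,r')}\mapsto e_{v,r}\otimes e_{w,r'}$, and check well-definedness on the defining relations. The paper merely says ``one readily checks'' and illustrates relation~(4) in a single case for $\cart$, whereas you carry out the full case analysis; your slightly cryptic remark about ``the remaining possibility $(r,r')=(s,s')$'' in the $\boxtimes$ disjunct $v\sim v',\,w\sim w'$ is in fact already subsumed by the categorical-product argument (since $(r,r')\not\sim_{\boxtimes}(s,s')$ implies $(r,r')\not\sim_{\times}(s,s')$, hence $r\not\sim s$ or $r'\not\sim s'$), but this does no harm.
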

\begin{proof}
It suffices to construct a map
\[
\cl A_{lc}(G\cdot H,K\cdot K')\to\cl A_{lc}(G,K)\otimes\cl A_{lc}(H,K').
\]
We define it by
\[
e_{(x,y),(k,k')}\mapsto e_{x,k}\otimes e_{y,k'}.
\]
One readily checks that this map is well-defined. For example, in the case of the Cartesian product $\square$ we show that if $(x,y)\sim(z,w)$ and $(k,k')\not\sim(\ell,\ell')$, then $e_{(x,y),(k,k')}e_{(z,w),(\ell,\ell')}$ maps to 0. Without loss of generality, we can assume that $x=z$ and $y\sim w$. Then the image is $e_{x,k}e_{x,\ell}\otimes e_{y,k'}e_{w,\ell'}$, which is automatically 0 if $k\neq\ell$. So, we may assume $k=\ell$, in which case $k'\not\sim\ell'$ since $(k,k')\not\sim(\ell,\ell')$. But then $e_{y,k'}e_{w,\ell'}=0$.
\end{proof}

\begin{cor}\label{cor:box-prod-lc-bound}
We have $\chi_{lc}(G\boxtimes H)\leq\chi_{lc}(G)\chi_{lc}(H)$.
\end{cor}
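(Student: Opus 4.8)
The statement $\chi_{lc}(G\boxtimes H)\leq\chi_{lc}(G)\chi_{lc}(H)$ should follow by combining Lemma~\ref{l:prod-map} (with $\cdot=\boxtimes$) with the behavior of $\chi_{lc}$ under $lc$-maps recorded in Corollary~\ref{cor:chi_lc-under-maps} and the computation of $\cl A_{lc}$ on complete graphs. First I would set $c=\chi_{lc}(G)$ and $d=\chi_{lc}(H)$, so that $G\lc K_c$ and $H\lc K_d$ by definition of the respective chromatic numbers. Applying Lemma~\ref{l:prod-map} with the strong product gives $G\boxtimes H\lc K_c\boxtimes K_d$.

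The next step is to relate $K_c\boxtimes K_d$ to a complete graph. One checks directly that $K_c\boxtimes K_d\cong K_{cd}$: the vertex set is $\{1,\dots,c\}\times\{1,\dots,d\}$, and two distinct vertices $(i,j)\neq(i',j')$ are adjacent in $K_c\boxtimes K_d$ iff ($i\sim i'$ and $j\sim j'$) or ($i=i'$ and $j\sim j'$) or ($i\sim i'$ and $j=j'$); since in complete graphs $i\sim i'\iff i\neq i'$, every pair of distinct vertices is adjacent, so $K_c\boxtimes K_d=K_{cd}$. Hence $G\boxtimes H\lc K_{cd}$, which by the definition of $\chi_{lc}$ immediately gives $\chi_{lc}(G\boxtimes H)\leq cd=\chi_{lc}(G)\chi_{lc}(H)$.

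The only genuine content is Lemma~\ref{l:prod-map}, which is already available, so this corollary is essentially a two-line deduction; the identification $K_c\boxtimes K_d=K_{cd}$ is the one small verification needed and presents no obstacle. (Alternatively, one could avoid invoking Lemma~\ref{l:prod-map} as a black box and instead directly build the map $\cl A_{lc}(G\boxtimes H,K_{cd})\to\cl A_{lc}(G,K_c)\otimes\cl A_{lc}(H,K_d)$ sending $e_{(x,y),(k,k')}\mapsto e_{x,k}\otimes e_{y,k'}$ under the identification of $K_{cd}$-colors with pairs, but reusing the lemma is cleaner.) I do not anticipate any real difficulty here.
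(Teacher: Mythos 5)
Your proposal is correct and takes essentially the same route as the paper: the paper's proof is exactly ``apply Lemma~\ref{l:prod-map} together with the observation $K_n\boxtimes K_m=K_{nm}$,'' which is what you do. Your explicit writing-out of the identification $K_c\boxtimes K_d\cong K_{cd}$ is a fine elaboration of the same argument.
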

\begin{proof}
This follows immediately from Lemma \ref{l:prod-map} after observing that $K_n\boxtimes K_m=K_{nm}$.
\end{proof}

In \cite{CMRSSW}, the authors showed that  $8=\chi(C_5 \boxtimes K_3) = \chi_q(C_5 \boxtimes K_3) > \chi_{vect}(C_5 \boxtimes K_3)=7$. Thus, separating $\chi_{q}$ from $\chi_{vect}$. Later, \cite{PSSTW} showed that $\chi_{qc}(C_5 \boxtimes K_3) =8$, separating the potentially smaller $\chi_{qc}$ from $\chi_{vect}$.  We show below that $\chi_{lc}(C_5 \boxtimes K_3)= 8$ as well.

Before considering $C_5\boxtimes K_3$ we begin with a simpler example.

\begin{exam}[{$C_5\boxtimes K_2$}]
\label{ex:C_5-boxtimes-K_2}
Let $G=C_5\boxtimes K_2$. It is easy to see that $\omega(G)=4$ and $\chi(G)=5$, so a priori $\chi_{lc}$ could be 4 or 5. We show
\[
\chi_{lc}(C_5\boxtimes K_2)=5.
\]
We need to show that $\cl A_{lc}(G,K_4)=0$. The graph $G$ is made up of 2 pentagons stacked on top of each other. Let one of the pentagons have vertices $x,y,z,w,s$ labeled clockwise and let the other pentagon have vertices $x',y',z',w',s'$ with $x$ and $x'$ having the same neighbors. For ease of notation, we denote $e_{v,i}$ by $v_i$. Note that
\[
1=\sum_{\sigma\in S_4}(s_{\sigma(3)}s'_{\sigma(4)}+s'_{\sigma(3)}s_{\sigma(4)})x_{\sigma(1)}x'_{\sigma(2)}y_{\sigma(3)}y'_{\sigma(4)}(z_{\sigma(1)}z'_{\sigma(2)}+z'_{\sigma(1)}z_{\sigma(2)}).
\]
Multiplying on both the left and right by $w_1w'_2$, we obtain
\[
w_1w'_2=w_1w'_2(s_3s'_4+s'_3s_4)(x_1x'_2+x'_1x_2)(y_3y'_4+y'_3y_4)(z_1z'_2+z'_1z_2)w_1w'_2=0.
\]
Similarly, we find $w_iw'_j=0$ for all $i,j$. As a result,
\[
1=\sum_{i,j}w_iw'_j=0
\]
and so $\cl A_{lc}(G,K_4)=0$.
\end{exam}

\begin{exam}[{$C_5\boxtimes K_3$}]
\label{ex:C_5-boxtimes-K_3}
Let $G=C_5\boxtimes K_3$. We see $\omega=6$ and $\chi=8$, so a priori $\chi_{lc}$ could be 6, 7, or 8. We show
\[
\chi_{lc}(C_5\boxtimes K_3)=8=\chi(C_5\boxtimes K_3).
\]
We must show $\cl A_{lc}(G,K_7)=0$. We follow the same notational conventions as in Example \ref{ex:C_5-boxtimes-K_2}. Let $x,y,z,w,s$ be the vertices of $C_5$ labeled clockwise and denote the next two copies of $C_5$ by $x',\dots,s'$ resp.~$x'',\dots,s''$ where $x$, $x'$, $x''$ have the same neighbors in $G$. We also let $v_i=e_{v,i}$.

As in the previous example,
\[
w_1w'_2w''_3=w_1w'_2w''_3SXYZw_1w'_2w''_3,
\]
where $S=\sum_{i,j,k}s_is'_js''_k$ and analogously for $X,Y,Z$. We show that every term occurring in the sum on the righthand side of the above equation is 0. The indices $i,j,k$ occurring in the sum $S$ must all lie in $\{4,5,6,7\}$ otherwise the term vanishes (since it is multiplied by $w_1w'_2w''_3$). Our goal is to show that all terms in the sum in the righthand side vanish, so we can fix a summand in $S$ and assume $i,j,k$ equal $4,5,6$ respectively. Then the indices in $X$ must be 3 of $\{1,2,3,7\}$. We also see that the indices in $Z$ must be 3 of $\{4,5,6,7\}$. Fix a summands $x_ax'_bx''_c$ and $z_pz'_qz''_r$ of $X$ and $Z$, respectively. Then $\{1,2,\dots,7\}\setminus\{a,b,c,p,q,r\}$ has size at most 2. Therefore, every summand $t$ of $Y$ satisfies $XtZ=0$. So, $w_1w'_2w''_3=0$, and analogously we see $w_iw'_jw''_k=0$ for all $i,j,k$. So,
\[
1=\sum_{i,j}w_iw'_jw''_k=0
\]
showing that $\cl A_{lc}(G,K_7)=0$. As a result, $\chi_{lc}(G)=8$.
\end{exam}

We end by posing the following:
\begin{prob}
Since the definition of $\chi_{lc}$ is not obviously related to representations on Hilbert spaces, it is unclear how to relate it to $\chi_t$ for $t\in\{ loc, q, qa, qc, vect \}$. Where does $\chi_{lc}$ fit within this hierarchy?
\end{prob}

Even more specifically,
\begin{prob}
Give an example where $\chi_{lc}(G)\neq\chi(G)$.
\end{prob}

\end{document}